\documentclass[reqno]{amsart}[14]
\usepackage{amssymb, amsmath}
\usepackage[foot]{amsaddr}
\allowdisplaybreaks 
\usepackage{mathrsfs}
\usepackage{comment}
\usepackage{color}
\usepackage{tikz-cd}

\pagestyle{plain}
\numberwithin{equation}{section} 

\tolerance=500 \textwidth15.6cm \textheight 22cm \hoffset -1.6cm \voffset -1cm

\hyphenation{si-mi-lar-ly}

\def\sideremark#1{\ifvmode\leavevmode\fi\vadjust{\vbox to0pt{\vss
 \hbox to 0pt{\hskip\hsize\hskip1em
 \vbox{\hsize3cm\tiny\raggedright\pretolerance10000
 \noindent #1\hfill}\hss}\vbox to8pt{\vfil}\vss}}}%

\theoremstyle{plain} \newtheorem{thm}{Theorem}[section]
\theoremstyle{plain} 
\theoremstyle{plain} \newtheorem{lem}[thm]{Lemma}
\theoremstyle{plain} \newtheorem{prop}[thm]{Proposition}
\theoremstyle{plain} \newtheorem{cor}[thm]{Corollary}
\theoremstyle{definition} \newtheorem{defn}{Definition}
\theoremstyle{plain} \newtheorem{rema}[thm]{Remark}
\newtheorem{exas}[thm]{Examples}

\newcommand{\be}{\begin{equation}}
\newcommand{\ee}{\end{equation}}
\newcommand{\bea}{\begin{eqnarray}}
\newcommand{\eea}{\end{eqnarray}}
\newcommand{\beas}{\begin{eqnarray*}}
\newcommand{\eeas}{\end{eqnarray*}}

\newcommand{\R}{\mathbb{R}}

\newcommand{\N}{\mathbb{N}}
\newcommand{\Z}{\mathbb{Z}}
\newcommand{\Zs}{\Z_2} 		
\newcommand{\Zn}{\Zs^n} 

\newcommand{\cO}{{\mathcal{O}}} 
\newcommand{\cJ}{\mathcal{J}}
\newcommand{\cT}{\mathcal{T}}

\newcommand{\cB}{{\mathcal{B}}}
\newcommand{\cF}{{\mathcal{F}}}
\newcommand{\cG}{{\mathcal{G}}}

\newcommand{\Ci}{{\mathcal{C}}^{\infty}} 

\newcommand{\Om}{{\Omega}}

\newcommand{\op}[1]{\!\!\mathop{\rm ~#1}\nolimits}

\newcommand{\I}{\mathbb{I}}			



\DeclareMathOperator{\Der}{\textrm{Der}} 		
\DeclareMathOperator{\supp}{\textrm{supp }}	
\newcommand{\ev}{\textrm{ev}}								

\newcommand{\iHom}{\underline{Hom}} 
\DeclareMathOperator{\Hom}{Hom}								
\newcommand{\GL}{\mathsf{GL}^{\mathbf{0}}}		
\newcommand{\gl}{\mathsf{gl}}									
\newcommand{\ModA}{\Zn\mbox{-}\mathtt{Mod}_{A}} 
\newcommand{\ots}{\otimes}
\newcommand{\vect}[1]{{\bf #1}}    
\newcommand{\mathsc}[1]{{\mathscr #1}}   
\newcommand{\mathcs}[1]{{\mathscr #1}}


\begin{document}

\belowdisplayskip=7pt plus 3pt minus 4pt
\belowdisplayshortskip=7pt plus 3pt minus 4pt


\author{Tiffany Covolo$^1$, Stephen Kwok $^2$,}
\author{Norbert Poncin $^2$}
\address{$^1$Faculty of Mathematics, National Research University Higher School of Economics, 7 Vavilova Str., 117312 Moscow, Russia}
\address{$^2$Mathematics Research Unit, University of Luxembourg, 6, rue Richard Coudenhove-Kalergi, L-1359, Luxembourg}
\email{tcovolo@hse.ru, stephen.kwok@uni.lu, norbert.poncin@uni.lu}

\title{Differential calculus on $\mathbb{Z}^n_2$-supermanifolds}
\maketitle

\begin{abstract}
The concept of $\Zn$-supermanifold has been recently proposed as a natural generalization of classical ($\Zs$-graded) supergeometry, allowing for more complicated commutativity constraints. Here we continue the study of $\Zn$-supergeometry by developing the foundations of differential calculus on $\Zn$-supermanifolds.
\end{abstract}

\section*{Introduction}

Recently, there has been interest in generalizing supergeometry by allowing notions of commutativity with respect to more general grading groups. Motivation for this generalization comes from various areas, including physics. For example, more general gradings appear in string theory \cite{AFT} and in parastatistics, $\Z_2^2$-graded Lie algebras make an appearance \cite{YJ}. $\Zn$-supermanifolds also play a role in geometric mechanics \cite{GGU}. It was also suggested \cite{COP} that one could gain further insight into Clifford algebras by exploiting the fact that they carry a natural structure of commutative $\Zn$-graded algebra.

In the 1980s, Molotkov \cite{Mo} defined $\Zn$-supermanifolds in an abstract, infinite-dimensional setting, taking them to be locally modeled on locally convex topological vector spaces. More recently, a Berezin-Leites style definition of finite-dimensional $\Zn$-supermanifolds, in terms of ringed spaces endowed with structure sheaves of commutative $\Zn$-graded local rings, was given in \cite{CGPa}.

This paper continues the project begun in \cite{CGPa} and develops the basic theory of differential calculus on $\Zn$-supermanifolds. We have striven to make it readable for those who may not be familiar with classical supergeometry, so have included a fair amount of detail. Specialists in supergeometry will note that the theorems proven here often closely parallel their analogues in $\Zs$-supergeometry, with one crucial difference: the presence of formal power series in the coordinate rings of $\Zn$-supermanifolds necessitates the constant invocation of the Hausdorff-completeness of the $\cJ$-adic topology. Indeed, even linear algebra over a commutative $\Zn$-graded ring $R$ is not well-behaved unless $R$ is $J$-adically Hausdorff complete, and many natural operations on the sheaves (e.g. exterior derivative, wedge product) are continuous with respect to the $\cJ$-adic topology.

We briefly review the contents of this paper. The first section is dedicated to a quick review of various preliminaries: graded-commutative algebra, the category of $\Zn$-supermanifolds, the $\mathcal{J}$-adic topology, and the functor of points. Next, we discuss the tangent sheaf of a $\Zn$-supermanifold and the tangent space at a point, and then we define the modified Jacobian of a morphism and prove the chain rule. In the next section, we prove that the category of $\Zn$-supermanifolds admits finite products. After that, we discuss the local forms of morphisms of $\Zn$-supermanifolds: the crucial inverse function theorem is proven, as well as canonical local forms for immersions and submersions, and finally the constant rank theorem that shows that any morphism of constant rank may be factored into the composition of an immersion and a submersion. In the final section, we move to the cotangent sheaf, cotangent space, and differential forms. Here we show existence and uniqueness of the exterior derivative, then prove Poincar\'{e}'s lemma for $\Zn$-supermanifolds, showing that de Rham cohomology of a $\Zn$-supermanifold $M$ agrees with the ordinary cohomology of the underlying reduced space $|M|$. We treat various miscellanea, including linear algebra for $J$-adically Hausdorff complete rings, in an appendix.

\section*{Acknowledgments}

S.K. thanks the Luxembourgish National Research Foundation for its support via AFR grant no. 7718798.

\section{Preliminaries}\label{sec:prelim}

In this section, we will fix the notation used throughout the article and recall some basic definitions and results. For further details we refer the reader to the first two articles of this series on $\Zn$-graded geometry \cite{CGPa}, \cite{CGPb}, as well as previous papers of the authors on $\Zn$-graded algebra (\cite{COP},\cite{Cov},\cite{CM}), and references therein.

\subsection{Multigraded-commutative algebra}
In the sequel, $\mathbb{K}$ always denotes a field of characteristic $0$. In our notation, $\Zn=\Zs\times\cdots\times\Zs$ ($n$-times).

A \emph{$\Zn$-graded algebra} $A$ (over $\mathbb{K}$), is an $\mathbb{K}$-algebra of the form $A=\oplus_{\gamma \in\Zn} A^{\gamma}$ (decomposition into $\mathbb{K}$-vector spaces), in which the multiplication respects the $\Zn$-degree, i.e., $A^{\alpha}\cdot A^{\beta} \subset A^{\alpha+\beta}$. In this paper, we will always assume algebras to be associative and unital ($1\in A^{\mathbf{0}}$).
If in addition, for any pair of homogeneous elements $a\in A^{\alpha}$ and $b\in A^{\beta}$,
\be\label{signrule}
ab=(-1)^{\langle \alpha,\beta \rangle}\,ba \;,
\ee
where $\langle \;,\; \rangle$ denotes the usual scalar product, then the algebra $A$ is said to be \emph{$\Zn$-commutative}.

\begin{exas}
\begin{enumerate}
	\item[a)] Supercommutative algebras are the simplest examples ($n=1$).
	\item[b)] As shown in \cite{MO1, MO2}, the quaternion algebra $\mathbb{H}$ (and more generally any Clifford algebra $\op{C}\!\ell_k$) can be seen as a $\Zn$-commutative algebra for $n=3$ (respectively, $n=k+1$). For this, it suffices to associate appropriate degrees to the generators, e.g.,
	$$ \deg(\mathsf{i})= (0,1,1)\;, \qquad \deg(\mathsf{j})= (1,0,1) \qquad (\mbox{ and thus } \deg(\mathsf{k})= (1,1,0)\;).$$
\end{enumerate}
\end{exas}

One can easily observe that a $\Zn$-commutative algebra $A$ has in fact an \emph{underlying parity}, given by
$$ \Zn\ni \gamma=(\gamma_1, \ldots,\gamma_n)\; \mapsto\; \bar{\gamma}:=\sum_{k=1}^n \gamma_k \in\Zs \;.$$
In other words, degrees $\gamma$ in $\Zn$ are divided into \emph{even} ($\bar{\gamma}=\bar{0}$) and \emph{odd} ($\bar{\gamma}=\bar{1}$), which induces the analogous subdivision of the
homogeneous elements of $A$ (labeled in the same way as \emph{even} or \emph{odd}). To differentiate this underlying $\Zs$-grading from the original $\Zn$-grading we will use the intuitive notation. 

Note that, following the generalised sign rule \eqref{signrule}, every odd-degree element of $A$ is nilpotent, as it is familiar in supergeometry. However, the higher $\Zn$-case ($n>2$), is essentially different from the super case, as in general there can be anticommuting elements which are not nilpotent. We will come back to this essential point in the next sections.

\medskip
Analogously to the definition of $\Zn$-commutative algebras, other notions of linear algebras are straightforwardly inferred (see in particular \cite{CM} for a detailed exposition of the subject).
In this way, graded modules over a graded-commutative algebra $A$ and degree-preserving (right) $A$-linear  maps between them form an
 abelian category $\ModA$, 
which naturally admits a \emph{symmetric monoidal structure}  $\ots$ (see \cite[Section 2.2]{CM}),
with braiding given by
$$
\begin{array}{rccc}
c_{VW}^{gr}: &V \otimes W &\to& W \otimes V\\
&v \otimes w &\mapsto& (-1)^{\langle \deg(v),\deg(w)\rangle} w \otimes v
\end{array}
$$
for homogeneous elements $v$ and $w$.
This structure is also \emph{closed}, as for every graded $A$-module $W$, the functor $-\ots W: \ModA \to \ModA $ has a right-adjoint $\iHom_A(W,-): \ModA \to \ModA$, i.e. for any graded $A$-modules $V,U$ there is a natural isomorphism
$$ 
\Hom_A(V\ots W, U)\simeq \Hom_A(V,\iHom_A(W,U))\;.
$$
As one can readily verify, the \emph{internal hom} $\iHom_A(V,W)$ is the graded $A$-module which consists of all (right) $A$-linear maps
$  \ell: V \to W \;.$
Some may shift the $\Zn$-degree of the elements by a fixed $\gamma\in \Zn$, i.e., 
$$
\ell(V^{\alpha})\subset W^{\alpha+\gamma}\;
$$ 
for all $\alpha\in\Zn$. These latter constitute the $\gamma$-part $\iHom^{\gamma}_A(V,W)$ of $\iHom_A(V,W)$.
Hence, contrary to the case of modules over a classical commutative algebra, the internal hom $\iHom_A$ differs from the categorical hom $\Hom_A$, since this latter contains only $0$-degree $A$-linear maps. In other words, $\Hom_A(V,W)=\iHom^{0}_A(V,W)$.

\subsection{$\Zn$-supermanifolds}

The basic objects of our study are smooth $\Zn$-supermanifolds. Aside from the usual commuting coordinates (denoted in the following with the letter $x$), they present also different ``types'' of ``formal'' coordinates $\xi$ (corresponding to the different non-zero degrees in $\Zn$) which may commute or anticommute, following the generalized sign rule \eqref{signrule}. It is important to note that, contrary to superspaces (case $n=1$), in general here not every formal variable is nilpotent!

To keep track of these differences in a local coordinate system of a $\Zn$-supermanifold, we have to introduce some more notation.

Using the {underlying parity} we fix a \emph{standard order} of the elements of $\Zn$: first the even degrees ordered lexicographically, then the odd ones also ordered lexicographically. For example,
$$ (\Zs)^2=\{(0,0), (1,1), (0,1),(1,0)\}\;. $$
So that when we will refer to $\gamma_j\in\Zn\setminus\{0\}$, the \emph{$j$-th non-zero degree of $\Zn$}, we will always mean with respect to this standard order. We may thus write $\xi_{\gamma_j}$ to specify that the considered formal coordinates are of degree $\gamma_j\in\Zn\setminus\{0\}$.
Then, a tuple $\vect{q}=(q_1,\ldots,q_N)\in\R^N$ (where $N:=2^{n}-1$) provides all the information on the $\Zn$-graded variables $\xi$: there is a total of $|\vect{q}|:=\sum_{k=1}^N q_i$ graded variables $\xi^a$, among which $q_i$ of degree $\gamma_{i}\in\Zn\setminus\{0\}$, denoted by  $\xi_{\gamma_i}^{a_i}$ ($1\leq a_i\leq q_i$, $1\leq i\leq N$). For simplicity, the variables $\xi$ are also considered to be  ordered by degree.
Hence, throughout this article, a system of coordinates of a $\Zn$-superspace will be denoted in different ways, depending on the level of distiction needed: either $u$ (no distinction between coordinates), $(x,\xi)$ (considering only the zero/non-zero degree subdivision) or $(x, \xi_{\gamma_j})$ (considering the full $\Zn$-degree subdivision).

We are now ready to recall the definition of a $\Zn$-supermanifold.
\begin{defn}

A (smooth) \emph{$\Zn$-supermanifold} of dimension $p|\vect{q}$ is a \emph{locally $\Zn$-ringed space} $(M,\cO_M)$ which is locally isomorphic to a $\Zn$-superdomain $(\R^p,\Ci_{\R^p}[[\xi]])$. Local sections of 
this latter are \emph{formal power series} in $\Zn$-graded variables $\xi$ and smooth coefficients 
$$
 \Ci(U)[[\xi]]:=\left\{ \sum_{\alpha\in\N^{N}}^{\infty} f_{\alpha}\xi^{\alpha}\; |\; f_{\alpha}\in\Ci(U)\right\} \;.
$$

Morphisms between \emph{$\Zn$-supermanifolds} are simply morphisms of $\Zn$-ringed spaces, i.e., pairs
$(\phi,\phi^*):(M,\cO_M)\to (N,\cO_N)$ of a continuous map $\phi:M\to N$ and sheaf morphism $\phi^*_V:\cO_N(V)\to\cO_M(\phi^{-1}(V))$, $V\subset N$ open.
\end{defn}

\begin{rema}
In the following, we will use $M$ both to denote the $\Zn$-supermanifold $(M,\cO_M)$ and its base space, preferring the classical notation $|M|$ for this latter when confusion might arise, and analogously for morphisms ($\phi$ and $|\phi|$). The category of $\Zn$-supermanifolds will be denoted by $\Zn\text{-}\mathtt{Man}$.
\end{rema}

\subsection{Support of a function.} The following definition is standard:

\begin{defn}
Let $U$ be an open submanifold of a $\Zn$-supermanifold $M$, and let $f \in \cO(U)$. Let $U_f$ be the set of all points $u \in U$ such that $f = 0$ on some neighborhood of $u$. The {\it support} of $f$ is $supp(f) := U \backslash U_f$. 
\end{defn}

\noindent It is not difficult to see that $U_f$ is open in $U$, hence $supp(f)$ is a closed subset of $U$.

\subsection{$\cJ$-Adic topology and Hausdorff completeness}
Let $I$ be a homogeneous ideal of a $\Zn$-graded ring $R$, $M$ an $R$-module. The collection of sets $\{x + I^kM \}_{k=0}^\infty$, where $x$ runs over all elements of $M$, is readily seen to be a basis for a topology on $M$, called the {\it $I$-adic topology}. It follows immediately from the definition that the $I$-adic topology is translation-invariant with respect to the additive group structure of $M$.

The following lemma is standard but we give its proof for completeness.

\begin{lem}\label{Iadiccont}
Let $I$ be a homogeneous ideal of $R$, and let $f: M \to N$ be an $R$-module morphism. Then $f$ is $I$-adically continuous.
\end{lem}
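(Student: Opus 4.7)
The plan is to exploit the translation-invariance of the $I$-adic topology (noted just above the lemma) to reduce continuity everywhere to continuity at the origin, and then to use $R$-linearity of $f$ directly.

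First I would unwind what needs to be checked. A basis of neighborhoods of an arbitrary point $y\in N$ is given by the cosets $y + I^k N$, $k\geq 0$. Hence, for $f$ to be continuous at a point $x\in M$ it suffices to show that for every $k\geq 0$ there exists $\ell\geq 0$ with
\[
f(x + I^\ell M)\ \subset\ f(x) + I^k N.
\]
Because $f$ is additive, $f(x+I^\ell M) = f(x) + f(I^\ell M)$, so the problem reduces to finding $\ell$ with $f(I^\ell M)\subset I^k N$. In particular, continuity at every point follows from continuity at $0$, which is the translation-invariance observation.

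The key step is then the trivial but crucial remark that $R$-linearity forces $f(I^k M)\subset I^k N$ for every $k$. Indeed, a general element of $I^k M$ is a finite sum $\sum_i r_i m_i$ with $r_i\in I^k$ and $m_i\in M$; applying $f$ and using $R$-linearity gives $\sum_i r_i f(m_i)\in I^k N$. Thus we may take $\ell=k$, which completes the proof.

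There is no real obstacle here: once the topology is recognized as translation-invariant and the basic neighborhoods of $0$ are the submodules $I^k M$, $R$-linearity of $f$ does all the work. The only minor point to keep in mind is to be explicit that $I^k M$ denotes the submodule generated by products $r\cdot m$ with $r\in I^k$, so that the computation $f(rm)=rf(m)$ immediately lands in $I^k N$; everything else is a one-line application of translation-invariance.
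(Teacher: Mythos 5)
Your proof is correct and rests on the same key observation as the paper's, namely that $R$-linearity gives $f(I^kM)\subseteq I^kN$; the paper packages the conclusion by writing $f^{-1}(y+I^kN)$ as a union of basic open sets rather than by invoking translation-invariance and continuity at $0$, but these are trivially equivalent formulations of the same argument.
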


\begin{proof}
Let $y$ be any element of $N$. Since the sets $y + I^kN$ constitute a basis for the $I$-adic topology of $N$, it suffices to prove $f^{-1}(y + I^kN)$ is $I$-adically open in $M$ for all $k$. As $f$ is an $R$-morphism, $f(I^kM) \subseteq I^kN$. Hence $f^{-1}(y + I^kN)$ is the union of the open sets $x + I^kM$ where $x$ runs over all elements of $f^{-1}(y + I^kN)$, so that $f^{-1}(y + I^kN)$ is open.
\end{proof}

One checks that the $I$-adic topology on $R$ makes $R$ into a topological ring.

\begin{defn}
Let $I$ be a homogeneous ideal of $R$. $R$ is {\it Hausdorff complete} in the $I$-adic topology if the natural ring morphism $ R \;\xrightarrow{p}\; \varprojlim\nolimits_{k\in\N} R/I^k\;$ is an isomorphism. 
\end{defn}

We next deal with notions of convergence in the $I$-adic topology.

\begin{defn}
Let $I$ be an ideal of $R$, and let $a_i$ be a sequence of elements of $R$. $a_i$ is an {\it $I$-adic Cauchy sequence} if for each non-negative integer $n$ there exists $l$ such that $a_j - a_k \in I^n$ for all $j, k \geq l$. $a_i$ {\it converges $I$-adically} if there exists $a \in R$ such that for each non-negative integer $n$, there exists $l$ such that $a_i - a \in I^n$ for all $i \geq l$.
\end{defn}

Evidently these definitions may be extended to the $I$-adic topology on an $R$-module $M$ in a natural way. The following proposition is standard.

\begin{prop}
Let $R$ be a commutative $\Zn$-superalgebra, and $I$ a homogeneous ideal. Suppose $R$ is $I$-adically Hausdorff complete. Then a sequence $\{a_i\}$ is $I$-adically Cauchy if and only if it converges $I$-adically to a unique limit in $R$.
\end{prop}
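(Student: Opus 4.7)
The plan is to prove the two implications separately, using the isomorphism $R \cong \varprojlim_k R/I^k$ afforded by Hausdorff completeness for the nontrivial direction, and handling uniqueness as a corollary of the injectivity of this same map.

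For the easy direction, suppose $\{a_i\}$ converges $I$-adically to $a \in R$. Given $n$, pick $l$ with $a_i - a \in I^n$ for all $i \geq l$. Then for $j,k \geq l$ one has $a_j - a_k = (a_j - a) - (a_k - a) \in I^n$ since $I^n$ is an additive subgroup. Hence $\{a_i\}$ is $I$-adically Cauchy.

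For the main direction, assume $\{a_i\}$ is Cauchy. For each $k$, let $\pi_k : R \to R/I^k$ be the canonical projection; by the Cauchy condition there exists $l_k$ such that $a_j - a_i \in I^k$ for all $i,j \geq l_k$, so $\pi_k(a_i)$ is constant for $i \geq l_k$. Call this stable value $\bar a_k \in R/I^k$. I would then check that the family $(\bar a_k)_k$ is compatible under the transition maps $R/I^{k+1} \to R/I^k$: both $\bar a_{k+1}$ and $\bar a_k$ arise as $\pi_{k+1}(a_i)$, $\pi_k(a_i)$ for any sufficiently large common index $i \geq \max(l_k, l_{k+1})$, and the transition map sends $\pi_{k+1}(a_i)$ to $\pi_k(a_i)$. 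Thus $(\bar a_k)_k$ defines an element of $\varprojlim_k R/I^k$, which by Hausdorff completeness is the image of a unique $a \in R$ under $p$. By construction $a - a_i \in I^k$ whenever $i \geq l_k$, establishing $I$-adic convergence $a_i \to a$.

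Finally, uniqueness of the limit follows from the observation that the Hausdorff completeness hypothesis makes the map $p : R \to \varprojlim_k R/I^k$ injective, so its kernel $\bigcap_k I^k$ vanishes. If $a$ and $a'$ were both $I$-adic limits of $\{a_i\}$, then for every $n$ one would have $a - a' = (a - a_i) - (a' - a_i) \in I^n$ for $i$ sufficiently large, so $a - a' \in \bigcap_k I^k = 0$. The main (though mild) obstacle is merely bookkeeping around the indices $l_k$ to ensure the compatibility of the $\bar a_k$; no genuine difficulty arises, since the argument reduces to the standard completeness package for $I$-adic topologies once one has $R \cong \varprojlim_k R/I^k$ at hand.
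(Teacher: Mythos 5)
Your proposal is correct and follows essentially the same route as the paper's proof: both directions are handled by constructing the compatible family $(\bar a_k)_k$ in $\varprojlim_k R/I^k$ from the Cauchy sequence, pulling it back through the isomorphism $p$, and deriving uniqueness from $\bigcap_k I^k = 0$. The only difference is that you spell out the compatibility check and the final verification that $a-a_i\in I^k$, which the paper leaves as ``directly verified.''
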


\begin{proof} The fact that a convergent sequence is Cauchy is straightforward, so we prove the converse. Let $p$ be the canonical map from $R \to \varprojlim\nolimits_{k\in\N} R/I^k$. If $a_i$ is a Cauchy sequence, it yields a well-defined element $[a]_k$ in $R/I^k$ for each $k$: since $a_i \equiv a_j \text{ mod } I^k$ for $i, j$ sufficiently large, we define $[a]_k$ to be the equivalence class of $a_i$ for $i$ large enough. 

Noting that $\varprojlim\nolimits_{k\in\N} R/I^k = \{(b_1, b_2, \dotsc ) \in \prod_{k \geq 0} R/I^k : b_k = f_{kl}(b_l), k \leq l\}$, where $f_{kl}: R/I^l \to R/I^k$ is the obvious map, the $[a]_k$ define a unique element $a'$ of $\varprojlim\nolimits_{k\in\N} R/I^k$. Since $p$ is an isomorphism, there is a unique element $a := p^{-1}(a')$ of $R$. It is then directly verified that $a$ is the limit of the sequence $\{a_i\}$. To prove uniqueness of the limit, suppose $a_i$ converges to $\widetilde{a}$. Then $a - \widetilde{a} = (a-a_i) + (a_i - \widetilde{a})$ lies in $I^k$ for all $k$ since $a-a_i$ and $a_i - \widetilde{a}$ are both Cauchy, but $\cap_{k =0}^\infty I^k = 0$ by the Hausdorff property, whence $a = \widetilde{a}$.
\end{proof}

A similar proposition is true for $I$-adically Hausdorff complete $R$-modules; the proof is left to the reader.\\

Canonically associated to any $\Zn$-graded algebra $R$ is the homogeneous ideal $J$ of $R$ generated by all homogeneous elements of $R$ having nonzero $\Gamma$-degree. If $f: R \to S$ is a morphism of $\Zn$-graded algebras, then $f(J_R) \subseteq J_S$. The $J$-adic topology plays a fundamental role in $\Zn$-supergeometry.

This notion can be sheafified: for a $\Zn$-supermanifold $M$, we have an ideal sheaf $\cJ$, defined by $\cJ(U) = \langle f \in \cO(U) \,:\, f \text{ is of nonzero $\Zn$-degree} \rangle$, which defines a $\cJ$-adic topology on $\cO$ in an obvious way. If $\cF$ is a sheaf of $\cO$-modules, there is an analogous $\cJ$-adic topology on $\cF$, defined in a natural way. Throughout this paper, all statements about sheaves of topological $\cO$-modules (e.g., saying a sheaf is Hausdorff complete) will refer to this induced $\cJ$-adic topology.

As shown in \cite{CGPa, CGPb}, many basic results valid for smooth $\Zs$-supermanifolds also hold for this multigraded generalization. For instance, the underlying space of a $\Zn$-supermanifold $M$ admits a structure of smooth manifold $\Ci_M$, and there is a canonical projection $\varepsilon:\cO_M\to\Ci_M$, corresponding to the \emph{reduced space} of the $\Zn$-supermanifold. It can be shown that $\cJ =\ker\varepsilon$.

The obstacle, in the higher $\Zn$-case, represented by the loss of the nilpotency of $\cJ$ (a fundamental fact in supergeometry), is compensated for by the Hausdorff completeness of the $\cJ$-adic topology:

\begin{prop}[Proposition 6.9 in \cite{CGPa}]
Let $M$ be a $\mathbb{Z}^n_2$-supermanifold. Then $\cO_M$ is \emph{$\cJ$-adically Hausdorff complete} as a sheaf of $\mathbb{Z}^n_2$-commutative rings, i.e., the morphism
$$ \cO_M \;\xrightarrow{p}\; \varprojlim\nolimits_{k\in\N} \cO_M/\cJ^k\;$$
naturally induced by the filtration of $\cO_M$ by the powers of $\cJ$ is an isomorphism.\\
\end{prop}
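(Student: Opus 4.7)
The plan is to argue locally, reducing to a $\Zn$-superdomain where everything is given by an honest formal power series ring, and then observe that the statement on sheaves follows because inverse limits of sheaves are computed sectionwise. Since the canonical morphism $p\colon \cO_M\to \varprojlim_k \cO_M/\cJ^k$ is a morphism of sheaves, isomorphy may be checked on a basis of coordinate neighborhoods. Choose such a chart $(V,\Ci_V[[\xi]])$, so that $\cO_M(V)=\Ci(V)[[\xi]]$ and $\cJ(V)$ is the ideal $(\xi_{\gamma_1}^{a_1},\ldots,\xi_{\gamma_N}^{a_N})$ generated by all the graded coordinates.

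Next I would identify $\cJ(V)^k$ with the submodule of $\Ci(V)[[\xi]]$ consisting of those formal series whose expansion starts in monomials of total $\xi$-degree at least $k$. This is the place where one uses the definition of $\Zn$-commutativity: even though some $\xi$-variables are not nilpotent, products of $k$ generators still sit in total $\xi$-degree $\ge k$, so the combinatorial description of $\cJ(V)^k$ is as expected. Consequently $\cO_M(V)/\cJ(V)^k\cong \Ci(V)[\xi]/(\xi)^k$, the space of polynomial truncations with smooth coefficients.

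With this identification the verification becomes essentially the universal property of $\Ci(V)[[\xi]]$ as an inverse limit of its jets. For surjectivity, given a compatible family $([g_k])\in \varprojlim_k \cO_M(V)/\cJ(V)^k$, the condition $g_{k+1}\equiv g_k\bmod (\xi)^k$ forces the coefficient of each monomial $\xi^{\alpha}$ in a representative polynomial of $g_k$ to stabilize once $k>|\alpha|$, producing unique functions $f_\alpha\in\Ci(V)$; the formal series $f=\sum_{\alpha}f_\alpha\xi^\alpha$ maps to $([g_k])$ under $p_V$. For injectivity, if $f\in \bigcap_k \cJ(V)^k$, then every coefficient $f_\alpha$ must vanish, so $f=0$, which also verifies Hausdorffness. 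Finally, because $(\varprojlim_k \cO_M/\cJ^k)(V)=\varprojlim_k \cO_M(V)/\cJ^k(V)$ for each $V$ in the coordinate basis, the sectionwise isomorphism upgrades to an isomorphism of sheaves.

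The main obstacle is the passage from the local polynomial/power-series picture to the sheaf statement: one must check that on a coordinate chart the honest ideal power $\cJ(V)^k$ agrees with the sheaf-theoretic $\cJ^k(V)$, so that the computation on the superdomain is computing the right quotient, and one must be careful to compute the inverse limit of sheaves sectionwise rather than on stalks (the latter generally fails for inverse limits). Once these compatibilities are in hand, the conclusion is a direct translation of the fact that $\Ci(V)[[\xi]]=\varprojlim_k\Ci(V)[\xi]/(\xi)^k$.
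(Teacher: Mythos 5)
Your argument is correct, and it is essentially the standard one: note that the paper does not actually prove this proposition here --- it is imported as Proposition 6.9 of \cite{CGPa} --- and the proof given there proceeds by the same local reduction you describe, namely identifying $\cO(V)$ on a coordinate chart with $\Ci(V)[[\xi]]$, checking that $\cJ(V)^k$ is exactly the set of series supported in $\xi$-degree $\geq k$, and invoking $\Ci(V)[[\xi]] = \varprojlim_k \Ci(V)[\xi]/(\xi)^k$. The one compatibility you mention only in passing and that deserves to be made fully explicit is that on a coordinate chart the quotient presheaf $V \mapsto \cO(V)/\cJ^k(V)$ is already a sheaf (each $\cJ^k$ is a soft sheaf of $\Ci$-modules, being locally a direct summand of the free module $\cO$, so no sheafification correction intervenes); this is what justifies the identification $(\cO_M/\cJ^k)(V) = \cO_M(V)/\cJ^k(V)$ and hence the purely sectionwise computation of the inverse limit of sheaves.
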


\subsection{The functor of points}

Similar to what happens in classical $\Zs$-supergeometry, $\Zn$-superfunctions on a $\Zn$-supermanifold $M$ cannot be recovered from their values at topological points of $|M|$ because of the presence of elements of nonzero $\Zn$-degree in $\cO_M$, whose restrictions to topological points are $0$. To remedy this defect, we broaden the notion of ``points", as was suggested by Grothendieck in the context of algebraic geometry.

Let $S$, $M$ be $\Zn$-supermanifolds. An {\it $S$-point} of $M$ is just a morphism $S \to M$ of $\Zn$-supermanifolds. An $S$-point of $M$ is naturally identified with a section of the projection $M \times S \to S$, as one may readily see (we will show later that finite products of $\Zn$-supermanifolds exist, so that speaking of $M \times S$ and the projection $M \times S \to S$ has a meaning). One may regard an $S$-point of $M$ as a ``family of points of $M$, parameterized by $S$."

Let $Mor(S, M)$ denote the set of all $\Zn$-supermanifold morphisms from $S$ to $M$. A morphism $F: S \to S'$ induces a map of sets $F^*: Mor(S', M) \to Mor(S, M)$ by precomposition with $F$, and for a composition of morphisms $S'' \xrightarrow{G} S \xrightarrow{F} S'$, one has $(F \circ G)^* = G^* \circ F^*$. Thus $Mor(-, M)$ is a functor $(\Zn\text{-}\mathtt{Man})^{op} \to (Sets)$. This functor is called the {\it functor of points} of $M$, and we denote it by $\underline{M}$.

We have the following 

\begin{prop}
Let $M$ and $N$ be $\Zn$-supermanifolds. The correspondence $F \mapsto F^*$ induces a natural bijection of the set of all morphisms $M \to N$ of with the set of all natural transformations $\underline{M} \to \underline{N}$.
\end{prop}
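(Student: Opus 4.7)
The proposition is an instance of the Yoneda lemma applied to the category $\Zn\text{-}\mathtt{Man}$, so the argument is essentially formal. The plan is as follows.

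First, I would unpack the map $F \mapsto F^*$: for a morphism $F: M \to N$, the component of $F^*$ at a $\Zn$-supermanifold $S$ is post-composition, $F^*_S(\phi) := F \circ \phi$ for $\phi \in Mor(S,M)$. Naturality in $S$ is immediate from associativity of composition of morphisms of $\Zn$-supermanifolds, so this indeed defines a natural transformation $F^*: \underline{M} \to \underline{N}$.

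For injectivity, suppose $F_1, F_2: M \to N$ induce the same natural transformation. Evaluating the $S = M$ components on $\id_M \in Mor(M,M)$ gives $F_1 = F_1 \circ \id_M = F_2 \circ \id_M = F_2$.

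For surjectivity, given $\eta: \underline{M} \to \underline{N}$, I would define $F := \eta_M(\id_M) \in Mor(M,N)$ and verify that $F^* = \eta$. The key ingredient is naturality of $\eta$ with respect to an arbitrary morphism $\phi: S \to M$, which yields the commutative square
\begin{equation*}
\begin{array}{ccc}
Mor(M,M) & \xrightarrow{\eta_M} & Mor(M,N) \\
\downarrow & & \downarrow \\
Mor(S,M) & \xrightarrow{\eta_S} & Mor(S,N)
\end{array}
\end{equation*}
in which both vertical arrows denote pre-composition by $\phi$. Chasing $\id_M$ through the square yields $\eta_S(\phi) = F \circ \phi = F^*_S(\phi)$, as required.

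No genuine obstacle arises: the argument depends only on the abstract category-theoretic setup, and no feature particular to $\Zn$-supergeometry enters. This is precisely the virtue of recasting the formalism in terms of the functor of points.
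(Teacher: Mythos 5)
Your proof is correct and takes exactly the approach the paper intends: the paper omits the argument entirely, remarking only that it is ``a formal exercise using Yoneda's lemma,'' and your write-up is precisely that standard Yoneda argument (injectivity via evaluation at $\id_M$, surjectivity via $F := \eta_M(\id_M)$ and the naturality square). Your reading of $F^*$ as post-composition by $F$ on each $Mor(S,-)$ is the correct interpretation needed for the statement to make sense, despite the paper's overloaded use of the superscript $*$ for pre-composition elsewhere.
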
 

The proof is a formal exercise using Yoneda's lemma and exactly mimics the one in the case of classical $\Zs$-supergeometry, so will not be repeated here. This proposition allows us to reduce the study of $\Zn$-supermanifolds to the study of set-valued functors from the category of $\Zn$-supermanifolds.\\

\noindent \underline{Example}: the Chart Theorem for $\Zn$-supermanifolds (Thm. 7.10 of \cite{CGPa}) tells us that $Mor(S, \R^{p|{\bf q}})$ is the set of all ordered homogeneous $p|{\bf q}$-tuples of functions in $\Gamma(\cO_S)$. For a composition of morphisms $S'' \xrightarrow{G} S \xrightarrow{F} S'$, we have $(F \circ G)^* = G^* \circ F^*$ (here ${}^*$ denotes the pullback of global sections of the structure sheaf). Thus

\begin{align*}
&\underline{\R^{p|{\bf q}}}(S) =\\
&\{ \text{ordered homogeneous $p|{\bf q}$-tuples $(f_1, \dotsc, f_p, \nu^{\mu_1}_1, \dotsc, \nu^{\mu_1}_{j_1}, \dotsc, \nu^{\mu_q}_1, \dotsc, \nu^{\mu_q}_{j_q})$, where $f_i, \nu^\mu_j \in \Gamma(\cO_S)$} \}.
\end{align*}

\section{The tangent sheaf}\label{sec:tg}

Let $U$ be an open subset of a $\Zn$-supermanifold $M$. We consider the set $Der_\R(\cO(U))$ of $\mathbb{Z}^n_2$-graded $\R$-linear derivations on $\cO(U)$, i.e. $\R$-linear maps $D: \cO(U) \to \cO(U)$ satisfying the graded Leibniz rule:

\[
D(ab) = Da \cdot b + (-1)^{\langle \deg(D), \deg(a) \rangle} a \cdot Db\;.
\]\

Then $Der_\R(\cO(U)$) is a graded $\cO(U)$-module, as may readily be verified. We now show that derivations are local.

\medskip
For this we will require the following localization principle. We follow the discussion in \cite{Lei}.

\begin{lem}\label{localization}
Let $X$ be a closed subset of $M$, $U$ an open subset such that $X \subset U$, and $f \in \cO(U)$. Then there exists an open subset $V$ such that $X \subset V \subseteq U$, and a global section $h \in \cO(M)$ such that $f|_V = h|_V$ and $\supp(h) \subseteq \supp(f)$. If $X$ is compact, then $h$ may be taken to have compact support in $U$.
\end{lem}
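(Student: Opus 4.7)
The plan is to reduce the problem to standard manifold theory on the reduced space $|M|$ by multiplying $f$ by a smooth bump function, and then to use the sheaf axiom of $\cO_M$ to extend the resulting product to a global section. The key ingredient is that smooth functions on $|M|$ lift canonically to even sections of $\cO_M$, as the ``constant in $\xi$'' part of the local model $\Ci[[\xi]]$, so that such a product makes sense inside the structure sheaf.

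First I would invoke paracompactness and normality of the smooth manifold $|M|$ to produce, by a smooth Urysohn-type argument, a function $\rho \in \Ci(|M|)$ with $\rho \equiv 1$ on some open neighborhood $V$ of $X$ contained in $U$, and with $\supp(\rho) \subseteq U$; if $X$ is compact, $\rho$ can moreover be chosen to have compact support inside $U$. I would then view $\rho$ (or rather, its restrictions to coordinate charts) as an even element of $\cO$ via the local embeddings $\Ci(U') \hookrightarrow \cO(U')$ coming from the local model, which are compatible with restriction and hence assemble into a sheaf morphism.

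Next I would form the product $g := \rho|_U \cdot f \in \cO(U)$ and glue it to the zero section on $|M|\setminus\supp(\rho)$. On the overlap $U\setminus\supp(\rho)$, $\rho$ vanishes so $g$ vanishes as well, and the two sections agree; the sheaf axiom then yields a unique global section $h \in \cO(M)$ restricting to $g$ on $U$ and to $0$ outside $\supp(\rho)$. On $V$ one has $h|_V = (\rho f)|_V = f|_V$ because $\rho \equiv 1$ there.

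Finally, I would check the support condition: any point outside $\supp(\rho)$ or outside $\supp(f)$ has an open neighborhood on which one of the factors, hence $h$, is identically zero, giving $\supp(h) \subseteq \supp(\rho) \cap \supp(f) \subseteq \supp(f)$; in the compact case this also shows $\supp(h)$ is compact and contained in $U$. The only step that requires genuine care is justifying that smooth bump functions on $|M|$ really do lift to even global sections of $\cO_M$, and that multiplication by such a lift behaves correctly with respect to supports; both reduce to local statements in $\Ci[[\xi]]$ that are immediate from the definition of the structure sheaf, so I do not anticipate any serious obstacle.
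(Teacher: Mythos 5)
Your overall strategy---multiply $f$ by a bump function supported in $U$ and equal to $1$ near $X$, then glue with the zero section---is essentially the paper's argument: the paper takes a partition of unity $\{\varphi_\alpha\}$ subordinate to a refinement of $\{U, M\setminus X\}$ and sets $h:=\sum_{\alpha\in I_X}\varphi_\alpha\cdot f$, which amounts to choosing the bump function $\rho=\sum_{\alpha\in I_X}\varphi_\alpha$. The gap is in how you produce $\rho$. You claim a smooth function on $|M|$ lifts to an even section of $\cO(M)$ because the chart-wise inclusions $\Ci(U')\hookrightarrow\Ci(U')[[\xi]]$ ``are compatible with restriction and hence assemble into a sheaf morphism.'' This is false: those inclusions depend on the choice of coordinates and are not preserved by transition maps. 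Already in ordinary supergeometry, and a fortiori for $\Zn$ with $n\ge 2$ (where products of nonzero-degree coordinates can again have degree $\mathbf{0}$), a coordinate change can send a degree-zero coordinate $x$ to $x+g$ with $g\in\cJ^2$ of degree $\mathbf{0}$, so the ``constant in $\xi$'' part of a section is not chart-independent. There is a canonical projection $\varepsilon:\cO_M\to\Ci_M$, but no canonical section of it; producing one is the content of the nontrivial and non-canonical splitting theorem of \cite{CGPb}, and is certainly not ``immediate from the definition of the structure sheaf.''

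The fix is cheap but must be made: instead of lifting a classical bump function, invoke the existence of partitions of unity with values in $\cO(M)^{\mathbf{0}}$ (Section 7.4 of \cite{CGPa}) subordinate to a refinement of $\{U, M\setminus X\}$, let $I_X$ index those $\varphi_\alpha$ whose supports meet $X$, and set $\rho:=\sum_{\alpha\in I_X}\varphi_\alpha$. Local finiteness gives $\rho\in\cO(M)^{\mathbf{0}}$ with $\supp(\rho)\subseteq U$ and $\rho\equiv 1$ on the open set $V:=M\setminus\bigcup_{\alpha\notin I_X}\supp(\varphi_\alpha)$, which contains $X$. With this $\rho$, the rest of your argument --- gluing $\rho|_U\cdot f$ with $0$, the support estimate $\supp(h)\subseteq\supp(\rho)\cap\supp(f)$, and the compact case (finitely many $\alpha\in I_X$) --- goes through verbatim and coincides with the paper's proof.
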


\begin{proof}
By \cite[Section 7.4]{CGPa}, there exists a partition of unity $\{\varphi_\alpha\}$ subordinate to some refinement of the open cover $\{U, M \backslash X\}$ of $M$. Let $I_X$ denote the set of all $\alpha$ such that $\supp(\varphi_\alpha)$ intersects $X$; it is clear that $\supp(\varphi_\alpha) \subset U$ for all $\alpha \in I_X$. We then set
\[
h := \sum_{\alpha \in I_X} \varphi_\alpha \cdot f\;.
\]

\medskip

All terms of the sum are elements of $\cO(M)$ compactly supported in $U$, hence also in $M$. Thus $h$ so defined is also in $\cO(M)$ by local finiteness of the partition, and $\supp(h) \subseteq \supp(f)$. If $X$ is compact, $I_X$ is finite and hence $h$ has compact support in $U$.
\end{proof}

\begin{lem}
Let $M$ be a $\mathbb{Z}^n_2$-supermanifold, $D: \cO(M) \to \cO(M)$ a global derivation, and $U$ an open submanifold of $M$. Then there exists a unique derivation $D|_U: \cO(U) \to \cO(U)$ such that $(Dg)|_U = D|_U(g|_U)$.
\end{lem}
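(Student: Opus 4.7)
The plan is to mimic the classical proof that derivations on sheaves of smooth functions are local, with attention to the two complications inherent in the $\Zn$-graded setting: the ideal $\cJ$ need not be nilpotent, and we must track signs. A key preparatory step is to establish that $D$ itself is local in the sense that \emph{if $h\in\cO(M)$ vanishes on an open set $V$, then $(Dh)|_V=0$.} I would prove this by fixing $p\in V$, choosing a degree-zero bump function $\chi\in\cO(M)^{\mathbf{0}}$ with $\chi=1$ on some open $W\ni p$, $W\subset V$, and $\supp(\chi)\subset V$ (which exists by the partition-of-unity theorem, \cite[Section~7.4]{CGPa}). Then $\chi h$ is identically zero in $\cO(M)$ (pointwise verification, using the sheaf property), so $D(\chi h)=0$; expanding via the graded Leibniz rule, and using that $\chi$ is even, gives
\[
0=(D\chi)\cdot h+\chi\cdot(Dh).
\]
Restricting to $W$, where $h=0$ and $\chi=1$, yields $(Dh)|_W=0$, hence $(Dh)|_V=0$ since $p$ was arbitrary.

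Having this, I would define $D|_U$ as follows. For $f\in\cO(U)$ and $p\in U$, apply Lemma~\ref{localization} with $X=\{p\}$ to obtain an open $V$ with $p\in V\subseteq U$ and a global section $h\in\cO(M)$ with $h|_V=f|_V$. Set $(D|_U f)|_V:=(Dh)|_V$. The local vanishing principle above shows this is independent of the choice of extension $h$: if $h'$ is another, then $(h-h')|_V=0$, so $(D(h-h'))|_V=0$. The same argument shows that local pieces defined on different neighborhoods agree on overlaps, so by the sheaf axiom for $\cO_M$ they assemble into a unique section $D|_U f\in\cO(U)$.

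The relation $(Dg)|_U=D|_U(g|_U)$ is then immediate: take $h=g$ as the global extension. The $\R$-linearity and the $\Zn$-graded Leibniz rule for $D|_U$ follow by computing locally on extensions: given $f_1,f_2\in\cO(U)$, extend each to $h_1,h_2\in\cO(M)$ on a common neighborhood $V$ of a chosen point; then $h_1h_2$ extends $f_1f_2$ on $V$, and the Leibniz rule for $D$ applied to $h_1h_2$ restricts to the desired identity on $V$. Uniqueness is immediate from the defining relation combined with the sheaf property: any operator $D'$ satisfying $(Dg)|_U=D'(g|_U)$ agrees with our $D|_U$ on every locally extendable section, i.e., everywhere.

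The main obstacle is the local vanishing principle, which is the substitute for the nilpotency argument that would be available in classical $\Zs$-supergeometry for analogous purposes. Once this is in hand, the rest is standard sheaf-theoretic bookkeeping, with the only other subtlety being the choice of an \emph{even} bump function to avoid sign pollution in the Leibniz expansion.
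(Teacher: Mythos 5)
Your proposal is correct and follows essentially the same route as the paper: the same bump-function locality principle (if $h$ vanishes on $V$ then $Dh$ vanishes on $V$), the same use of Lemma \ref{localization} to extend local sections globally, and the same gluing via the sheaf axiom. The only cosmetic difference is your remark about evenness of the bump function; the paper also takes $\varphi\in\cO(M)^{\mathbf{0}}$, though the sign is in fact harmless here since one only evaluates on the set where $h=0$ and $\varphi=1$.
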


\begin{proof}
Let $V$ be an open submanifold of $M$; we show that if $h|_V \equiv 0$, then $Dh|_V \equiv 0$. Indeed, suppose $v \in V$, then by Lemma \ref{localization} there exists a function $\varphi \in \cO(M)^{0}$
 such that $supp(\varphi) \subset V$ and $\varphi \equiv 1$ on some neighborhood $W$ of $v$. Then $\varphi h \equiv 0$, whence $D(\varphi h) = D \varphi \cdot h + \varphi \cdot Dh \equiv 0$. As $h|_W \equiv 0$ and $\varphi|_W \equiv 1$, we have that $Dh|_W \equiv 0$. But this is true near any point $v \in V$, whence $Dh|_V \equiv 0$.

Now suppose $f \in \cO(U)$ and $u \in U$. By Lemma \ref{localization} there exists a function $h \in \cO(M)$ agreeing with $f$ in some neighborhood $V$ of $u$ in $U$. By the above discussion, the $Dh|_V$ are independent of the choice of $h$ and depend only on $f$. Thus, the local functions $Dh|_V$ piece together to define a unique function on $U$, which we denote by $D|_Uf$. This procedure defines an operator $D|_U$, which is readily seen to be a graded derivation from the fact that $D$ is. The uniqueness of $D|_U$ is a straightforward consequence of our proof.
\end{proof}

Hence, given an inclusion of open subsupermanifolds $V \subseteq U$, we may define a restriction homomorphism $\rho_{UV}: \Der_\R(\cO(U)) \to \Der_\R(\cO(V))$ by assigning to a derivation $X$ on $U$ the unique derivation $X|_U$ on $V$ given by the lemma. It is readily checked that the $\rho_{UV}$ so defined satisfy the axioms for the restriction homomorphisms of a sheaf of $\cO$-modules. 

Hence we may make the following definition:

\begin{defn}
The {\it tangent sheaf} $\cT M$ of a $\mathbb{Z}^n_2$-supermanifold $M$ is the sheaf of topological $\cO$-modules
$$
\cT M(U) := \Der_{\R}(\cO(U))\;,
$$
with the restriction homomorphisms $\rho_{UV}$ defined as above.
\end{defn}

\medskip
The following $\cJ$-adic continuity property of derivations will be crucial in much of what follows:
\begin{prop}\label{dercont}
Let $X$ be a homogeneous $\mathbb{Z}^n_2$-graded derivation of $\cO(U)$. Then $X: \cO(U) \to \cO(U)$ is $\cJ(U)$-adically continuous.
\end{prop}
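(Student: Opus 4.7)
The plan is to exploit the fact that a derivation can lower the $\cJ$-adic filtration by at most one step, via the graded Leibniz rule, and then combine this with translation-invariance of the $\cJ$-adic topology to deduce continuity.

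First, since $X$ is $\R$-linear and the $\cJ(U)$-adic topology is translation-invariant (the basic neighborhoods of a point $f$ are the cosets $f + \cJ(U)^k$), continuity of $X$ everywhere is equivalent to continuity at $0 \in \cO(U)$. I would emphasize here that Lemma~\ref{Iadiccont} does \emph{not} apply directly, because $X$ is a derivation and not an $\cO(U)$-linear map; this is precisely what makes the statement non-trivial.

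Second, I would establish the key inclusion
$$X(\cJ(U)^m) \subseteq \cJ(U)^{m-1} \quad \text{for every } m \geq 1.$$
To this end, recall that $\cJ(U)^m$ is the $\cO(U)$-submodule generated by all $m$-fold products $f_1 f_2 \cdots f_m$ with each $f_i$ a homogeneous element of $\cJ(U)$. Iterating the graded Leibniz rule gives
$$X(f_1 \cdots f_m) \;=\; \sum_{i=1}^m (-1)^{\langle \deg X,\, \deg f_1 + \cdots + \deg f_{i-1}\rangle} f_1 \cdots f_{i-1}\, X(f_i)\, f_{i+1} \cdots f_m,$$
and each summand on the right is a product still containing $m-1$ factors from $\cJ(U)$, hence lies in $\cJ(U)^{m-1}$. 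Applying $X$ to a general element of $\cJ(U)^m$ — which is a finite $\R$-linear combination of products of the above type multiplied by elements of $\cO(U)$ — together with one further use of the Leibniz rule (whose untouched factor remains in $\cJ(U)^m \subseteq \cJ(U)^{m-1}$) and $\R$-linearity of $X$, yields the claim.

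Third, continuity at $0$ follows immediately: for any basic neighborhood $\cJ(U)^k$ of $0$ in the target, taking $m = k+1$ in the above claim gives $X(\cJ(U)^{k+1}) \subseteq \cJ(U)^k$, so the preimage of $\cJ(U)^k$ contains the open neighborhood $\cJ(U)^{k+1}$ of $0$. The only subtle point in the argument is the observation that applying $X$ to a product of $m$ elements of $\cJ(U)$ lands in $\cJ(U)^{m-1}$ rather than only in $\cJ(U)$; once this filtration estimate is in hand, continuity is automatic.
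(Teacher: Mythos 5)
Your proof is correct and follows essentially the same route as the paper: the key step in both is the filtration estimate $X(\cJ(U)^m)\subseteq \cJ(U)^{m-1}$ obtained from the graded Leibniz rule (the paper phrases it as an induction on $m$, you expand the $m$-fold product directly), after which continuity follows from additivity of $X$ and translation-invariance of the $\cJ$-adic topology.
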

In particular, this means that if a sequence of sections $(f_k)_{k\in\N}$ tends $\cJ$-adically to $f$, then when $k\to\infty$, $Xf_k$ tends $\cJ$-adically to $Xf$.
Clearly, a similar statement holds at the level of stalks: $X_m: \cO_{m} \to \cO_{m}$ is $\cJ_m$-adically continuous for any point $m \in M$.

\begin{proof}
We will show by induction that $X (\cJ^k(U)) \subseteq \cJ^{k-1}(U)$ for any $k$. The case $k = 1$ is vacuously true. Suppose $X(\cJ^k(U)) \subseteq \cJ^{k-1}(U)$ for some $k$. Any element of $\cJ^{k+1}(U)$ is a finite sum of elements of the form $ab$, with $a \in \cJ(U)$ and $b \in \cJ^k(U)$, $a$ homogeneous. Then $X(ab) = Xa \cdot b + (-1)^{\langle \deg(X), \deg(a) \rangle} a \cdot Xb\,$; it follows from the inductive hypothesis that the right hand side is in $\cJ^k(U)$. 

Now let $g \in \cO(U)$. Noting that $X$ is a homomorphism for the additive group structure of $\cO(U)$, the fact that $X (\cJ^{k+1}(U)) \subseteq \cJ^k(U)$ implies $X^{-1}(g + \cJ^k(U))$ is the union of the $\cJ(U)$-adically open sets $f + \cJ^{k+1}(U)$, where $f$ runs over all elements of $X^{-1}(g + \cJ^k(U))$, so that $X^{-1}(g + \cJ^k(U))$ is open. Since the sets $g + \cJ^k(U)$ are a basis for the $\cJ(U)$-adic topology, we are done.
\end{proof}

Let $U$ be an open subset of $M$. A section of $\cT M$ on $U$ is called a {\it vector field} on $U$. The real $\Zn$-graded vector space of vector fields on $U$ forms a \emph{$\mathbb{Z}^n_2$-Lie (color) algebra} 
in the sense of \cite[Section 2]{COP}, with the graded Lie bracket defined by:
$$
[X, Y](f) := X(Yf) - (-1)^{\langle \deg(X),\, \deg(Y) \rangle} Y(Xf),
$$
for homogeneous $X, Y \in \cT M(U)$ and any $f \in \cO(U)$, and extended to all vector fields by linearity.
Indeed, the reader may readily check that the above defines an $\R$-bilinear operation $[- \, , -]:\cT M(U) \otimes_\R \cT M(U) \to \cT M(U)$ which is  {\it graded antisymmetric}:\

$$
[X, Y] + (-1)^{\langle \deg(X),\, \deg(Y) \rangle} [Y, X] = 0
$$
and satisfies the {\it graded Jacobi identity}:\

\[
\begin{split}
[X, [Y, Z]] + &(-1)^{\langle \deg(X),\, \deg(Y) \rangle + \langle \deg(X),\, \deg(Z) \rangle} [Y, [Z, X]]
\\ &+ (-1)^{\langle \deg(X),\, \deg(Z) \rangle + \langle \deg(Y),\, \deg(Z) \rangle} [Z, [X, Y]] = 0\;.
\end{split}
\]

\medskip
As in the ungraded case, the following proposition is basic:
\begin{prop}\label{tangentbasis}
Let $M$ be a $\mathbb{Z}^n_2$-supermanifold of dimension $p|{\bf q}$. Then $\cT M$ is a locally free sheaf of topological $\cO_M$-modules, of rank $p|{\bf q}$. More precisely, let $u=(u^i)$ be a coordinate system on an open set $U$. Then the $(\partial_{u^i})$ form an $\cO(U)$-basis of $\cT M(U)$.
\end{prop}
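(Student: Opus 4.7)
The claim is local, so I pass to a coordinate chart modeled on the $\Zn$-superdomain $(U, \Ci_U[[\xi]])$ with coordinates $u=(u^i)$ and define the partial derivatives $\partial_{u^i}$ by the obvious formal rules: ordinary differentiation in the smooth coefficients for even $u^i$, and formal graded differentiation (with signs dictated by (\ref{signrule})) for graded $u^i$. A direct check shows each $\partial_{u^i}$ is a well-defined $\R$-linear map $\cO(U) \to \cO(U)$ satisfying the graded Leibniz rule and is homogeneous of degree $\deg(u^i)$, hence lies in $\cT M(U)$. Linear independence over $\cO(U)$ is immediate: from $\partial_{u^i}(u^j) = \delta_i^j$, any relation $\sum_i f^i \partial_{u^i} = 0$ evaluated on $u^j$ forces $f^j = 0$.

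For the spanning property, given $X \in \cT M(U)$ I form the finite sum $Y := \sum_i (Xu^i)\partial_{u^i}$ and set $D := X - Y$, a derivation satisfying $Du^i = 0$ for every coordinate. Decomposing into homogeneous components, I may assume $D$ is itself homogeneous, and the goal becomes to show $D = 0$. First, I claim $Dg = 0$ for every purely even $g \in \Ci(U) \hookrightarrow \cO(U)$. Indeed, for any point $p$ in the base, Hadamard's lemma yields $g = g(p) + \sum_j (x^j - p^j) h_j$ on some neighborhood $V$ of $p$; applying $D$ together with $Dx^j = 0$ gives $Dg|_V = \sum_j (x^j - p^j)(Dh_j)|_V$. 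Expanding $Dg = \sum_\alpha (Dg)_\alpha(x)\,\xi^\alpha$ and reading off coefficients yields $(Dg)_\alpha(p) = 0$ for every multi-index $\alpha$, so since $p$ was arbitrary each smooth function $(Dg)_\alpha$ is identically zero, whence $Dg = 0$.

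Second, for arbitrary $f \in \cO(U) = \Ci(U)[[\xi]]$, the truncations $f_{(k)} := \sum_{|\alpha|\leq k} f_\alpha(x)\,\xi^\alpha$ converge $\cJ$-adically to $f$. Applying $D$ to each $f_{(k)}$ via Leibniz produces a finite combination of terms $(Df_\alpha)\xi^\alpha$ (zero by the first step, since each $f_\alpha$ is purely even) and $f_\alpha D(\xi^\alpha)$ (zero by Leibniz from $D(\xi^a) = 0$). Hence $Df_{(k)} = 0$ for every $k$, and the $\cJ$-adic continuity of $D$ from Proposition \ref{dercont} gives $Df = \lim_k Df_{(k)} = 0$. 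Therefore $D = 0$, so $X = Y$. The topological freeness of $\cT M|_U$ then follows from the fact that the isomorphism $\cO(U)^{p|\vect{q}} \xrightarrow{\sim} \cT M(U)$, $(f_i) \mapsto \sum_i f_i\,\partial_{u^i}$, preserves the $\cJ$-adic filtration in both directions (the reverse inclusion uses $X u^j \in \cJ^k(U)$ whenever $X \in \cJ^k \cT M(U)$).

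The main obstacle is the second step: because $\cO(U)$ contains genuine formal power series in the $\xi$'s — unlike the nilpotent case $n=1$ — finitely many applications of Leibniz do not suffice, and the $\cJ$-adic continuity of derivations is essential, in keeping with the continuity theme stressed in the introduction. A minor subtlety in the first step is that the argument must proceed coefficient-by-coefficient in $\xi^\alpha$, since $Dg$ may a priori acquire nonzero higher-degree components when $D$ is not of degree $\mathbf{0}$.
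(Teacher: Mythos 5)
Your proof is correct, and its skeleton --- subtract $\sum_i (Xu^i)\,\partial_{u^i}$, then show that the resulting derivation $D$ with $Du^i=0$ vanishes identically by exploiting the $\cJ$-adic continuity of Proposition \ref{dercont} --- coincides with the paper's. Where you genuinely diverge is in how $D$ is killed. The paper argues stalkwise: $D$ annihilates polynomials in all coordinates, hence (by $\cJ$-continuity applied to truncations) all polynomial sections $\sum_\mu P_\mu(x)\xi^\mu$, and then invokes the polynomial approximation theorem of \cite{CGPa} to approximate an arbitrary germ $[f]_m$ modulo $\mathfrak{m}_m^k$, concluding $[Df]_m\in\bigcap_k \mathfrak{m}_m^k=0$. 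You instead decouple the two directions: the smooth coefficients are handled \emph{exactly} by Hadamard's lemma --- note that this step tacitly uses the locality/restriction lemma for derivations proved just before the definition of $\cT M$, since the Hadamard decomposition only exists on a small neighborhood of the chosen base point and you must restrict $D$ there --- while the formal direction is handled by $\cJ$-adic truncation of the power series. Your route avoids citing the polynomial approximation theorem and the $\mathfrak{m}_m$-adic intersection argument on stalks, at the price of a separate classical argument for the $C^\infty$ coefficients; the paper treats both directions uniformly through the single filtration by $\mathfrak{m}_m^k$. Your remarks on linear independence, on the coefficient-by-coefficient reading of $Dg$ when $D$ has nonzero degree, and on the two-sided compatibility of the $\cJ$-adic filtrations under $(f_i)\mapsto\sum_i f_i\,\partial_{u^i}$ supply details the paper leaves as ``readily checked.''
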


\begin{rema}
Consequently, the stalk $(\cT M)_m$ at any point $m\in M$ is a free $\cO_m$-module of rank $p|{\bf q}$,
\,with induced basis $\left( \left[\partial_{u^i} \right]_m \right) \,$.
\end{rema}

\begin{proof}
That the $\partial_{u^i}$ are $\cO(U)$-linearly independent is readily checked. To show that they span $\cT M(U)$, let $D$ be a derivation on $U$. Let $a^b:= Du^b$, and set $D' = D - \sum_b a^b \partial_{u^b}$. Since $D$ is a graded derivation, $D'P = 0$ for any polynomial $P$ in the $u^b$. By $\cJ$-continuity (cf. Proposition \ref{dercont}), it follows that $D'P = 0$ for any polynomial section in the sense of \cite{CGPa}, i.e. a section of $\cO(U)$ of the form $s = \sum_{|\mu| \geq 0} P_\mu(x) \xi^\mu$, where the $\mu$ are multi-indices and $P_\mu(x)$ are polynomials in the nonzero-degree coordinates $x^i$. Let $f \in \cO(U)$ and $m$ be any point in $U$. By polynomial approximation \cite[Thm. 6.10]{CGPa}, for any $k$ there exists a polynomial section $Q$ such that $[f]_m - [Q]_m \in \mathfrak{m}^k_m$, where $\mathfrak{m}_m$ denotes the unique homogeneous maximal ideal of $\cO_m$. Applying $D'$ to $f - Q$, we see that $[D'f]_m$ lies in $\mathfrak{m}^k_m$ for every $k$, hence $[D'f]_m = 0$. Since $m \in U$ was arbitrary, $D'f = 0$ for any function $f \in \cO(U)$, i.e. $D = \sum_b a^b \partial_{u^b}$.
\end{proof}

\begin{cor}
The sheaf of modules $\cT M$ is Hausdorff complete. Likewise, at every point $m\in M$, the stalk $(\cT M)_m$ is Hausdorff complete.
\end{cor}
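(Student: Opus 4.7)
The proof reduces to the analogous property of the structure sheaf (Proposition 6.9 of [CGPa]), exploiting the local freeness of $\cT M$ established in Proposition \ref{tangentbasis}. Pick any coordinate open $U \subseteq M$ with coordinates $u=(u^i)$. Proposition \ref{tangentbasis} yields an isomorphism
\[
\cO(U)^{p|\vect{q}} \xrightarrow{\sim} \cT M(U), \qquad (a^i) \mapsto \sum_i a^i \,\partial_{u^i},
\]
of $\Zn$-graded $\cO(U)$-modules (the factors on the left carry the appropriate degree shifts, so that the $i$-th summand sits in degree $-\deg(u^i)$). Because the $\cJ$-adic topology on both sides is, by construction, the one induced by the $\cO(U)$-module structure, this is in fact a topological isomorphism.

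The first step is then to invoke the appendix, which collects the basic linear algebra of $J$-adically Hausdorff complete rings; in particular, finite products, and hence finitely generated free modules, over such a ring inherit Hausdorff completeness. Combined with the Hausdorff completeness of $\cO(U)$, this gives Hausdorff completeness of $\cT M(U)$ on every coordinate chart $U$. Since Hausdorff completeness of a sheaf of topological $\cO$-modules, i.e.\ the assertion that $\cT M \to \varprojlim_k \cT M / \cJ^k \cT M$ is an isomorphism of sheaves, can be tested on a basis of opens (or equivalently at stalks), this suffices to establish the first claim.

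For the stalkwise statement, the remark preceding the corollary identifies $(\cT M)_m$ as a free $\cO_m$-module of rank $p|\vect{q}$ with basis $([\partial_{u^i}]_m)$, so it is enough to know that $\cO_m$ is itself $\cJ_m$-adically Hausdorff complete. This is immediate from the local model $\cO_m \cong \Ci_{|M|,m}[[\xi]]$, since a formal power series ring in finitely many variables over a local ring is automatically Hausdorff complete with respect to the ideal generated by the formal variables. The only genuinely delicate point in the whole argument is checking that the $\cJ$-adic topology on a free $\cO$-module really agrees with the product topology arising from the chosen basis, and hence that the appendix's finite-product lemma applies verbatim; once this identification is made, both assertions of the corollary follow without further calculation.
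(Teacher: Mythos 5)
Your proposal is correct and follows essentially the same route as the paper: local freeness of $\cT M$ from Proposition \ref{tangentbasis}, combined with the fact (Proposition \ref{freecomplete} in the Appendix) that a finite-rank free module over a $\cJ$-adically Hausdorff complete ring is itself Hausdorff complete, applied both on coordinate charts and at stalks. The extra care you take in identifying the $\cJ$-adic topology on the free module with the product topology is exactly the content implicitly used in the appendix proposition, so nothing is missing.
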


\begin{proof}
By Proposition \ref{tangentbasis}, for any coordinate set $U$, $\cT M(U)$ is a free $\cO(U)$-module of finite rank. Since the structure sheaf $\cO$ is $\cJ$-adically Hausdorff complete, this suffices to conclude (cf. Proposition  \ref{freecomplete} in the Appendix).
\end{proof}

\medskip

\subsection{Tangent space and tangent map}

\begin{defn}
Let $M$ be a $\Zn$-supermanifold, and let $m \in M$. The {\it tangent space} to $M$ at $m$, denoted $T_mM$, is the $\mathbb{Z}^n_2$-super $\R$-vector space of graded $\R$-linear derivations $\cO_m \to \R$.
\end{defn}

\begin{prop}
Let $X$ be a vector field defined in a neighborhood of $m$. Then $X$ induces a tangent vector $X|_m$ to $M$ at $m$. If $X$ is homogeneous, the degree of $X|_m$ is the same as that of $X$.
\end{prop}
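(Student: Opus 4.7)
The plan is to build $X|_m$ by first localizing $X$ to the stalk $\cO_m$ and then composing with evaluation at $m$, and finally to read off the degree from how each step interacts with the $\Zn$-grading.

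First, using the restriction homomorphisms $\rho_{UV}$ constructed just before the definition of $\cT M$, I can restrict $X$ to every open $V$ with $m \in V \subseteq U$ and obtain a compatible system of derivations. Passing to the colimit over such $V$ yields a derivation $X_m : \cO_m \to \cO_m$ on the stalk, given explicitly by $X_m([f]_m) = [Xf]_m$. By construction $X_m$ is $\R$-linear, homogeneous of the same $\Zn$-degree as $X$, and satisfies the graded Leibniz rule.

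Next I would compose with the canonical evaluation at $m$. The stalk $\cO_m$ carries a unique homogeneous maximal ideal $\mathfrak{m}_m$ with $\cO_m / \mathfrak{m}_m \cong \R$ (this is used already in the proof of Proposition \ref{tangentbasis} and follows from $\cJ = \ker \varepsilon$ together with $m$ determining a point of the reduced manifold), so there is a canonical graded $\R$-algebra homomorphism $\varepsilon_m : \cO_m \to \R$. Setting $X|_m := \varepsilon_m \circ X_m$, the graded Leibniz rule for $X_m$ gives, for homogeneous representatives,
\[
X|_m([f]_m[g]_m) = X|_m([f]_m)\,\varepsilon_m([g]_m) + (-1)^{\langle \deg(X),\,\deg(f)\rangle}\,\varepsilon_m([f]_m)\,X|_m([g]_m),
\]
which is precisely the Leibniz rule required in the definition of $T_m M$, with $\R$ viewed as an $\cO_m$-module via $\varepsilon_m$.

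For the degree statement: $X_m$ shifts $\Zn$-degree by $\deg(X)$, while $\varepsilon_m$ annihilates $\mathfrak{m}_m$, which contains $\bigoplus_{\alpha \neq \mathbf{0}} \cO_m^{\alpha}$. Hence $X|_m = \varepsilon_m \circ X_m$ vanishes on $\cO_m^{\alpha}$ whenever $\alpha + \deg(X) \neq \mathbf{0}$, i.e.\ whenever $\alpha \neq \deg(X)$ (using $2$-torsion of $\Zn$), so $X|_m$ is homogeneous of degree $\deg(X)$ when $\R$ is regarded as concentrated in degree $\mathbf{0}$.

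I do not expect a real obstacle: every step is a formal consequence of the definitions, the restriction lemma already proved, and the local structure of $\cO_m$. The only point that needs slight care is that the codomain really is $\R$ and not $\cO_m$; this is handled cleanly by the composition with $\varepsilon_m$, and the multiplicativity of $\varepsilon_m$ is what converts the $\cO_m$-valued Leibniz rule into the $\R$-valued one.
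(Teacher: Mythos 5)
Your proposal is correct and follows essentially the same route as the paper: induce a stalk-level derivation $X_m:\cO_m\to\cO_m$ and compose with the canonical evaluation at $m$ (which the paper writes as $\ev_m\circ\varepsilon$, the composition of reduction to $\Ci_m$ with evaluation, and you write as the single quotient map $\varepsilon_m$ by the maximal ideal --- the same map). Your explicit verification of the $\R$-valued Leibniz rule and of the degree statement via the vanishing of $\varepsilon_m$ on nonzero-degree elements just fills in details the paper leaves as ``readily verified.''
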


\begin{proof}
It is easily seen that $X: \cO(U) \to \cO(U)$ induces a graded derivation $X_m: \cO_m \to \cO_m$ at the stalk level such that if $X$ is homogeneous, $X_m$ has the same degree as $X$. Let $\varepsilon: \cO_m \to \Ci_m$ be the algebra morphism (of degree $0$) induced by pullback to the reduced manifold of $M$, and $\ev_m: \Ci _m \to \R$ be the evaluation morphism (of degree $0$) at $m$. We set
\be\label{tgtvecfromfield}
(X|_m)[f]_m := (\ev_m \circ \varepsilon \circ X_m)[f]_m\;.
\ee
It is readily verified that $X|_m: \cO_m \to \R$ so defined is a graded derivation having the same degree as $X$.
\end{proof}

As with the tangent sheaf, if $\dim(M) = p|{\bf q}$ then $\dim_{\R}(T_mM) = p|{\bf q}$. Indeed, given a coordinate system $(x^i, \xi^a)$ centered at $m$, the tangent vectors $\left(\partial_{x^i}|_m, \partial_{\xi^a}|_m\right)$ at $m$, induced by the coordinate vector fields following \eqref{tgtvecfromfield}, are a basis for $T_mM$. As in the case of the tangent sheaf, this is proven by polynomial approximation.

\medskip
We may compare the geometric fiber of the tangent sheaf at a point $m$ with the tangent space at $m$ defined above.
\begin{prop}
Let $m \in M$ be a point, $\mathfrak{m}$ the maximal ideal of $\cO_m$. Then,
$$ T_m M \simeq (\cT M)_m / \left(\mathfrak{m}\cdot(\cT M)_m\right)$$
as $\mathbb{Z}^n_2$-super $\R$-vector space.
\end{prop}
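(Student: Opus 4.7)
The plan is to construct a natural morphism $\Phi \colon (\cT M)_m \to T_m M$ of $\Zn$-super $\R$-vector spaces, sending a germ of vector field $X_m$ to the tangent vector $X|_m$ defined in \eqref{tgtvecfromfield}, and then to show that $\Phi$ is surjective with kernel exactly $\mathfrak{m}\cdot(\cT M)_m$. That $\Phi$ is well-defined, $\R$-linear, and degree-preserving on homogeneous elements is immediate from \eqref{tgtvecfromfield}, since $\ev_m\circ\varepsilon$ is a degree-$0$ algebra morphism.

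I would first verify the inclusion $\mathfrak{m}\cdot(\cT M)_m \subseteq \ker\Phi$: for $g\in\mathfrak{m}$, $Y_m \in (\cT M)_m$, and $[f]_m \in \cO_m$, formula \eqref{tgtvecfromfield} gives $(gY)|_m[f]_m = (\ev_m\circ\varepsilon)(g\cdot Y_m[f]_m) = \varepsilon(g)(m)\cdot (\ev_m\circ\varepsilon)(Y_m[f]_m)$, which vanishes because $\mathfrak{m}$ is exactly the kernel of $\ev_m\circ\varepsilon$ on $\cO_m$.

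For the reverse inclusion and for surjectivity I would work in coordinates $(u^i)$ centered at $m$. By Proposition \ref{tangentbasis} and the subsequent remark, any $X_m \in (\cT M)_m$ has a unique expansion $X_m = \sum_i a^i [\partial_{u^i}]_m$ with $a^i\in\cO_m$; evaluating $\Phi(X_m)$ on the coordinate germ $[u^j]_m$ yields $a^j(m)$, so $\Phi(X_m)=0$ forces each $a^j$ into $\mathfrak{m}$, i.e.\ $X_m \in \mathfrak{m}\cdot(\cT M)_m$. Conversely, given $v \in T_m M$, set $c^i := v([u^i]_m) \in \R$ (nonzero only when $\deg(u^i)$ matches the degree of $v$) and define $X_m := \sum_i c^i [\partial_{u^i}]_m$. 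The two derivations $v$ and $\Phi(X_m)$ from $\cO_m$ to $\R$ agree on every coordinate germ by construction; the graded Leibniz rule together with $a(m)=b(m)=0$ for $a,b\in\mathfrak{m}$ shows that any derivation $\cO_m \to \R$ annihilates $\mathfrak{m}^2$, hence factors through $\cO_m/\mathfrak{m}^2 \cong \R \oplus \mathfrak{m}/\mathfrak{m}^2$. Polynomial approximation (Thm.~6.10 of \cite{CGPa}) shows that the classes of the shifted coordinate germs span $\mathfrak{m}/\mathfrak{m}^2$ over $\R$, so $v$ and $\Phi(X_m)$ agree everywhere.

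The only mild subtlety is the step $v(\mathfrak{m}^2)=0$: for $n>1$ homogeneous elements of $\mathfrak{m}$ need not be nilpotent, so one cannot appeal to the supergeometric ``power series in nilpotents'' truncation familiar from the $\Zs$-case. Fortunately, because the target is $\R$, the graded Leibniz rule alone kills $\mathfrak{m}^2$ without any $\cJ$-adic continuity input; polynomial approximation enters only to exhibit generators of $\mathfrak{m}/\mathfrak{m}^2$. Everything else is a formal tracking of degrees to confirm that the isomorphism preserves the $\Zn$-grading.
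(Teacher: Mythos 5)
Your proof is correct and follows exactly the route the paper intends: its one-line proof (``unravel the previous proposition'') amounts to checking that the map $X_m\mapsto X|_m=(\ev_m\circ\varepsilon)\circ X_m$ is surjective with kernel $\mathfrak{m}\cdot(\cT M)_m$, which is precisely what you carry out, including the correct observation that the Leibniz rule alone kills $\mathfrak{m}^2$ for $\R$-valued derivations so that no $\cJ$-adic continuity is needed and polynomial approximation (to order $2$) suffices. Nothing to add.
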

\begin{proof}
This follows by unraveling the proof of the previous proposition.
\end{proof}

\medskip
\begin{defn}
Let $\psi: M \to N$ be a morphism of supermanifolds such that $\psi(m) = n$ for some point $m \in M$. The {\it tangent map} of $\psi$ at $m$ is the morphism of $\mathbb{Z}^n_2$-super vector spaces $d\psi_m: T_m M \to T_nN$ defined by
\[
d\psi_m(v)([f]_n) = v(\psi^*_m([f]_n))\;,
\]
for all $v\in T_mM$ and $[f]_n\in \cO_{N,n}$.
\end{defn}
It follows directly from this definition that
\begin{prop}\label{prop:tgtcomp}
Let $\psi: M\to N$ and $\phi:N\to S$ two morphisms of $\Zn$-supermanifolds. Then, for any point $m\in |M|$,
\be\label{comptgtmap}
d(\phi\circ\psi)_m=d\phi_{|\psi|(m)} \circ d\psi_m \;.
\ee
\end{prop}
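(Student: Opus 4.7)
The plan is to unravel the definition of the tangent map on both sides of \eqref{comptgtmap} and reduce the statement to the functoriality of the pullback on stalks of the structure sheaves, which is built into the definition of composition of morphisms of $\Zn$-ringed spaces.

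First I would fix $v \in T_m M$ and a germ $[f]_s \in \cO_{S, s}$ at the point $s = |\phi|(|\psi|(m))$, and expand the right hand side applied to $[f]_s$. Applying the definition of $d\phi_{|\psi|(m)}$ yields
\[
\bigl(d\phi_{|\psi|(m)} \circ d\psi_m\bigr)(v)([f]_s)
= d\psi_m(v)\bigl(\phi^*_{|\psi|(m)}([f]_s)\bigr),
\]
and then applying the definition of $d\psi_m$ to the inner expression gives
\[
\bigl(d\phi_{|\psi|(m)} \circ d\psi_m\bigr)(v)([f]_s)
= v\Bigl(\psi^*_m\bigl(\phi^*_{|\psi|(m)}([f]_s)\bigr)\Bigr).
\]
On the other hand, the definition of $d(\phi\circ\psi)_m$ gives directly
\[
d(\phi\circ\psi)_m(v)([f]_s) = v\bigl((\phi\circ\psi)^*_m([f]_s)\bigr).
\]

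The next step is to invoke the fact that for morphisms of $\Zn$-ringed spaces, composition is defined so that the induced stalk maps satisfy $(\phi\circ\psi)^*_m = \psi^*_m \circ \phi^*_{|\psi|(m)}$ as morphisms $\cO_{S, s} \to \cO_{M, m}$. Substituting this identity into the previous display makes the two sides coincide; since $v$ and $[f]_s$ were arbitrary, the equality of tangent maps in \eqref{comptgtmap} follows.

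There is essentially no obstacle here: the proof is a pure unwinding of definitions, and all the non-trivial content sits in the already-established facts that $d\psi_m$ and $d\phi_{|\psi|(m)}$ are well-defined derivations into $\R$ and that pullback on stalks is functorial. Consequently, I would write the whole argument as a short computation with no auxiliary lemmas.
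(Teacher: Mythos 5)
Your proof is correct and is exactly the unwinding-of-definitions argument that the paper has in mind (the paper simply asserts the proposition ``follows directly from this definition'' without writing it out). The key identity $(\phi\circ\psi)^*_m=\psi^*_m\circ\phi^*_{|\psi|(m)}$ from functoriality of pullbacks on stalks is precisely the content needed, so no further justification is required.
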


\subsection{Chain rule and the modified Jacobian.}

We now establish the chain rule for $\Zn$-supermanifolds and use it to relate the tangent map to the graded Jacobian matrix.

\begin{prop}\label{chainrule}
Let $U^{p|{\bf q}}, V^{r|{\bf s}}$ be $\mathbb{Z}^n_2$-superdomains, with coordinates
$u^a, v^b$ respectively. 
Let $\psi: U^{p|{\bf q}} \to V^{r|{\bf s}}$ be a morphism of supermanifolds. Then,
\[
\frac {\partial \psi^*(f)} {\partial u^a} = \sum_b \frac {\partial \psi^*(v^b)} {\partial u^a} \, \psi^* \bigg( \frac {\partial f} {\partial v^b} \bigg)
\]
for any $f \in \cO(V^{r|{\bf s}})$.
\end{prop}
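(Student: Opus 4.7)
The plan is to regard both sides of the claimed identity as $\mathbb{R}$-linear maps
\[
D_1,\, D_2 : \cO(V^{r|\mathbf{s}}) \to \cO(U^{p|\mathbf{q}})
\]
given by $D_1(f) := \partial_{u^a}(\psi^*(f))$ and $D_2(f) := \sum_b \partial_{u^a}(\psi^*(v^b))\, \psi^*(\partial_{v^b}(f))$, and to prove $D_1 = D_2$ by checking agreement on a generating set and extending by $\cJ$-adic continuity, mimicking the strategy used in the proof of Proposition \ref{tangentbasis}.

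First I would establish that both $D_1$ and $D_2$ satisfy the same twisted Leibniz rule along $\psi^*$:
\[
D_i(fg) \;=\; D_i(f)\,\psi^*(g) \;+\; (-1)^{\langle \deg u^a,\, \deg f\rangle}\,\psi^*(f)\,D_i(g),
\]
for all homogeneous $f, g \in \cO(V^{r|\mathbf{s}})$. For $D_1$ this is immediate from $\psi^*$ being a morphism of $\Zn$-commutative algebras combined with the graded Leibniz rule for $\partial_{u^a}$. For $D_2$ it follows from a direct sign computation: expanding $\partial_{v^b}(fg)$ produces its native sign $(-1)^{\langle \deg v^b,\, \deg f\rangle}$, and commuting $\psi^*(f)$ past the factor $\partial_{u^a}\psi^*(v^b)$ (which has degree $\deg u^a + \deg v^b$) contributes an extra sign $(-1)^{\langle \deg u^a + \deg v^b,\, \deg f\rangle}$; the two $\langle \deg v^b,\, \deg f\rangle$ contributions cancel modulo $2$, leaving exactly $(-1)^{\langle \deg u^a,\, \deg f\rangle}$, as required.

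Next, $D_1$ and $D_2$ visibly coincide on the constant $1$ and on each coordinate $v^b$: one has $D_2(v^b) = \sum_c \partial_{u^a}\psi^*(v^c)\,\psi^*(\delta^b_c) = \partial_{u^a}\psi^*(v^b) = D_1(v^b)$. Since $D_1 - D_2$ is $\mathbb{R}$-linear and satisfies the same twisted Leibniz rule, induction on the number of factors in a monomial shows that $D_1$ and $D_2$ agree on every polynomial in the $v^b$ with $\mathbb{R}$ coefficients.

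Finally, I would pass to the stalk at an arbitrary point $m \in U^{p|\mathbf{q}}$, setting $n := |\psi|(m)$. The stalk-level operators $\cO_n \to \cO_m$ induced by $D_1, D_2$ are $\mathfrak{m}$-adically continuous, since $\psi^*$ takes $\mathfrak{m}_n$ into $\mathfrak{m}_m$, each $\partial_{u^a}$ and $\partial_{v^b}$ is continuous by Proposition \ref{dercont}, and left multiplication by the germ of $\partial_{u^a}\psi^*(v^b)$ is continuous on $\cO_m$. By polynomial approximation \cite[Thm.~6.10]{CGPa}, every germ $[f]_n$ agrees with some polynomial in the $v^b$ modulo $\mathfrak{m}_n^k$ for every $k$; combined with the previous step and Hausdorffness of the $\mathfrak{m}_m$-adic topology on $\cO_m$, this forces $[D_1(f)]_m = [D_2(f)]_m$ for every $m \in U^{p|\mathbf{q}}$, and hence $D_1(f) = D_2(f)$ as sections on $U^{p|\mathbf{q}}$. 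The main obstacle is the sign bookkeeping in verifying the twisted Leibniz rule for $D_2$; the remainder of the argument is structural and closely parallels the polynomial-approximation argument in Proposition \ref{tangentbasis}.
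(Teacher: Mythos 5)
Your proposal is correct and follows essentially the same route as the paper: both sides are treated as derivations along $\psi^*$ whose difference kills all polynomials in the $v^b$, and the identity is then extended to all of $\cO(V^{r|\mathbf{s}})$ via $\cJ$-adic continuity, polynomial approximation at stalks, and Hausdorffness. The only difference is that you spell out the twisted Leibniz rule and its sign cancellation explicitly (and correctly), where the paper simply asserts that $D-D'$ is a graded derivation annihilating polynomials.
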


\begin{proof}
Let $f \in \cO(V^{r|{\bf s}})$,
\beas
D := \frac {\partial \psi^*(f)} {\partial u^a}\;,
&& D' := \sum_b \frac {\partial \psi^*(v^b)} {\partial u^a} \, \psi^* \bigg( \frac {\partial f} {\partial v^b} \bigg)\;,
\eeas
and consider the graded derivation $D - D'$. Clearly, $D - D'$ annihilates all polynomials in the coordinate functions $v^b$, as it is a graded derivation. By Proposition \ref{dercont}, $D - D'$ annihilates all polynomial sections on $V^{r|{\bf s}}$. Let $f \in \cO(V^{r|{\bf s}})$ and $m$ be any point in $V^{r|{\bf s}}$. By polynomial approximation \cite[Thm. 6.10]{CGPa}, for any $k$ there exists a polynomial section $Q$ such that $[f]_m - [Q]_m \in \mathfrak{m}^k_m$. Since this is true for any $k$, we see that $[(D - D')f]_m = 0$, and since $m \in V^{r|{\bf s}}$ was arbitrary, we conclude $(D - D')f = 0$.
\end{proof}

In particular, this implies that
\beas
\left( d\psi_m\;(\partial_{u^a}\vert_m)\right) 
&=&
\sum_{b} \left(\partial_{u^a}\psi^*(v^b)\vert_m\right) \cdot \partial_{v^b}\vert_{|\psi|(m)}
\eeas
so that the tangent map of $\psi$ at point $M$, is the linear map given by the \emph{block-diagonal} graded matrix
$$
 I_{uv} := \left( \partial_{u^a}\psi^*(v^b)|_m \right)_{ab}\;.
$$
Note that, as used in supergeometry, we may simply write $v=v(u)$ instead of $\psi^*(v)$.

\medskip
When considering the counterpart of \eqref{comptgtmap} in matrix form, however, we remark that an additional sign appears. Indeed, for $\Zn$-morphisms $\psi:M\to N$ and $\phi:N\to S$ which read locally respectively as $v=v(u)$ around $m$ and $w=w(v)$ around $|\psi|(m)$, one has
$$
\partial_{u^b}w^a= \sum_c \partial_{u^b}v^c \, \partial_{v^c}w^a
= \sum_c  (-1)^{\langle \deg(u^b)+\deg(v^c),\,\deg(v^c)+\deg(w^a)\rangle} \partial_{v^c}w^a \, \partial_{u^b}v^c \;.
$$
To absorb the redundant sign, we must thus consider the following instead.
\begin{defn}
The \emph{graded Jacobian matrix} of a local $\Zn$-morphisms $\psi$ between $\Zn$-domains $U$ and $V$, given by $v=v(u)$, is the degree $0$ graded matrix
$$
\op{Jac}\psi= \left( (-1)^{\langle \deg(v^i)+\deg(u^j),\,\deg(v^i) \rangle} \partial_{u^j}v^i\right)_{ij}\;.
$$
\end{defn}
\noindent This definition is natural, as the graded Jacobian simply corresponds to the \emph{graded transpose} of the matrix $(\partial_u v)$ (see \cite{Cov}).

We then have the following familiar result.
\begin{prop}
The graded Jacobian matrix of a composition of morphisms corresponds to the graded Jacobians of the individual morphisms, i.e.
\[
J_{\psi \circ \phi} = J_\psi \cdot J_\phi\;.
\]
\end{prop}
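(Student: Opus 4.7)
The plan is to verify the identity entry-by-entry in local coordinates by applying the chain rule of Proposition \ref{chainrule} together with the graded commutativity of $\cO_M$. Fix local coordinate systems $u$ on the source $U$, $v$ on $V$, and $w$ on $W$, writing $v=v(u)$ for $\phi$ and $w=w(v)$ for $\psi$. By definition, the $(i,j)$-entry of the left-hand side is
\[
(J_{\psi\circ\phi})_{ij}=(-1)^{\langle \deg(w^i)+\deg(u^j),\,\deg(w^i)\rangle}\,\partial_{u^j}w^i.
\]
Apply Proposition \ref{chainrule} to expand $\partial_{u^j}w^i=\sum_k \partial_{u^j}v^k\cdot\partial_{v^k}w^i$ (here $w^i$ and $v^k$ are understood pulled back to $U$ via the appropriate comorphisms, as in the discussion following Proposition \ref{chainrule}).

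Next, expand the $(i,j)$-entry of the product matrix from the definitions of $J_\psi$ and $J_\phi$, obtaining
\[
(J_\psi\cdot J_\phi)_{ij}=\sum_k (-1)^{\langle \deg(w^i)+\deg(v^k),\,\deg(w^i)\rangle+\langle \deg(v^k)+\deg(u^j),\,\deg(v^k)\rangle}\,\partial_{v^k}w^i\cdot\partial_{u^j}v^k.
\]
To compare this with the chain-rule expansion, one has to swap the factors $\partial_{v^k}w^i$ and $\partial_{u^j}v^k$ using the generalized sign rule \eqref{signrule} of the commutative $\Zn$-graded algebra $\cO(U)$; noting that $\deg(\partial_{v^k}w^i)=\deg(w^i)+\deg(v^k)$ and $\deg(\partial_{u^j}v^k)=\deg(v^k)+\deg(u^j)$, this commutation introduces a further factor of $(-1)^{\langle \deg(w^i)+\deg(v^k),\,\deg(v^k)+\deg(u^j)\rangle}$.

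It then remains to verify that the three accumulated signs collapse to the single sign on the left-hand side. Setting $\alpha=\deg(w^i)$, $\beta=\deg(v^k)$, $\gamma=\deg(u^j)$, one expands
\[
\langle\alpha+\beta,\alpha\rangle+\langle\beta+\gamma,\beta\rangle+\langle\alpha+\beta,\beta+\gamma\rangle
\]
by bilinearity and uses symmetry of $\langle\,,\,\rangle$ to see that all cross-terms except $\langle\alpha,\alpha\rangle$ and $\langle\alpha,\gamma\rangle$ appear twice, hence vanish mod $2$; the sum therefore equals $\langle\alpha+\gamma,\alpha\rangle$, which is exactly the sign in $(J_{\psi\circ\phi})_{ij}$.

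The only real obstacle is keeping the sign bookkeeping honest: the graded transpose in the definition of $\op{Jac}$ is precisely designed so that the two sign contributions coming from (i) the two individual Jacobian entries and (ii) the graded swap needed to reorder the two partial derivatives combine to give the single transpose sign appearing in the composite Jacobian. Once this cancellation is made explicit via the above change of variables $(\alpha,\beta,\gamma)$, the proof is complete.
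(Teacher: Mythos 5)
Your proof is correct and is precisely the argument the paper intends: the displayed computation immediately preceding the definition of $\op{Jac}$ (the chain rule followed by the graded swap of $\partial_{v^c}w^a$ and $\partial_{u^b}v^c$) is exactly your argument, and the paper leaves the final sign cancellation to the reader. Your explicit verification that $\langle\alpha+\beta,\alpha\rangle+\langle\beta+\gamma,\beta\rangle+\langle\alpha+\beta,\beta+\gamma\rangle\equiv\langle\alpha+\gamma,\alpha\rangle \pmod 2$ is accurate and completes the bookkeeping correctly.
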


\begin{rema}\label{rem:tgmap-Jac}
It is important for the next sections to stress the correspondence between the graded Jacobian and the tangent map of a $\Zn$-morphism.
The tangent map at a point is represented by a $0$-degree graded matrix with entries in $\R$, hence \textbf{block-diagonal}, whereas the modified Jacobian is a ``full'' graded matrix. By definition it is easily seen that the graded matrix corresponding to $d\phi_m$ is the reduced Jacobian matrix evaluated at the point $m$.
More explicitly, writing coordinates $u=(x,\xi_{\gamma}), v=(y,\eta_{\gamma})$ with respect to their $\Zn$-degree, one has
\be\label{tgmapmtrx}
d\phi_m= \op{Jac}_{\phi}\big|_m =
		\left(
			\begin{array}{c|c|c|c}
					B_0 & & & \\
					\hline
					& B_{\gamma_1} & & \\
					\hline
					& & \ddots & \\
					\hline
					& & & B_{\gamma_N}
			\end{array}					
		\right)
\ee
where
\beas
B_0:=\left({\frac{\partial y}{\partial x}}\Big|_m\right)
& \mbox{ and } &
B_{\mu}:= \left({\frac{\partial \eta_{\mu}}{\partial \xi_{\mu}}}\Big|_m\right)
\eeas
for $\mu\in\Zn\setminus\{0\}$, following the standard order.
\end{rema}

\section{Products of $\Zn$-supermanifolds}

We begin by stating standard lemmas about gluing together $\Zn$-supermanifolds and morphisms.

\begin{lem}\label{gluesmflds}
Let $X$ be a second-countable Hausdorff topological space, and let $\{U_i\}_{i \in I}$ be a collection of $\Zn$-superdomains such that the $|U_i|$ form an open cover of $X$. Let $U_{ij}$ be the open subdomain of $U_i$ such that $|U_{ij}| = |U_i| \cap |U_j|$. Suppose there exists a collection of $\Zn$-supermanifold isomorphisms $\phi_{ij}: U_{ij} \to U_{ji}$ such that 

\begin{itemize}
\item $\phi_{ii} = id_{U_{ii}}$ for all $i$;
\item $\phi_{ij} \circ \phi_{ji} = id_{U_{ij}}$ for all $i, j$;
\item $|\phi_{ij}|(|U_{ij} \cap U_{ik}|) = |U_{ji} \cap U_{jk}|$, and $\phi_{ik} = \phi_{jk} \circ \phi_{ij}$ on $U_{ij} \cap U_{ik}$ for all $i, j, k$.\\
\end{itemize}

\noindent Then there exists a $\Zn$-supermanifold $M$ with reduced space $X$ and morphisms $f_i: U_i \to M$ for each $i$, uniquely determined up to isomorphism, which are compatible with the inclusions $|U_i| \hookrightarrow X$ and such that $f_j \circ \phi_{ij} = f_i$ for any $i, j$.
\end{lem}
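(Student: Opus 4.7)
The plan is to construct $M$ as a locally $\Zn$-ringed space by the standard ``sheaf-theoretic gluing'' recipe, adapted verbatim to the $\Zn$-graded setting. The underlying topological space is $X$ itself; the structure sheaf $\cO_M$ is built as an equalizer of the local data. Concretely, for each open $V \subseteq X$, I would set
$$\cO_M(V) := \Bigl\{(s_i) \in \prod_{i \in I} \cO_{U_i}(V \cap |U_i|) \,:\, \phi_{ij}^*\bigl(s_j|_{V \cap |U_{ji}|}\bigr) = s_i|_{V \cap |U_{ij}|} \text{ for all } i,j\Bigr\},$$
with the evident restrictions. The presheaf axioms are immediate, and the sheaf axioms for $\cO_M$ reduce to the sheaf axioms for the individual $\cO_{U_i}$ together with the fact that the compatibility condition defining $\cO_M(V)$ is itself local on $V$.

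Next I would verify that $(X, \cO_M)$ is locally isomorphic to a $\Zn$-superdomain by exhibiting, for each $i$, an isomorphism $f_i : U_i \to M|_{|U_i|}$. The underlying continuous map is the open inclusion $|U_i| \hookrightarrow X$, while on sections, the projection $(s_j) \mapsto s_i$ furnishes a map $\cO_M(V) \to \cO_{U_i}(V \cap |U_i|)$ for $V \subseteq |U_i|$ whose inverse is built by propagating a section $s_i$ via the pullbacks $\phi_{ij}^*$; the cocycle condition $\phi_{ik} = \phi_{jk} \circ \phi_{ij}$ is exactly what is needed for the propagated family to satisfy the equalizer constraint on triple overlaps $|U_{ij} \cap U_{ik}|$. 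The identity $f_j \circ \phi_{ij} = f_i$ on $U_{ij}$ then reads off directly from the definition of $\cO_M$.

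For uniqueness up to isomorphism, given another solution $(M', \{f'_i\})$, the composites $f'_i \circ f_i^{-1} : M|_{|U_i|} \to M'|_{|U_i|}$ are local isomorphisms, and the compatibility relations $f_j \circ \phi_{ij} = f_i$ and $f'_j \circ \phi_{ij} = f'_i$ force any two such local isomorphisms to agree on overlaps; they therefore glue to a global isomorphism $M \to M'$ intertwining the $f_i$ with the $f'_i$.

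The step requiring most bookkeeping, and the only place where something might go wrong, is checking that the projections $\cO_M(V) \to \cO_{U_i}(V \cap |U_i|)$ are genuinely isomorphisms of $\Zn$-graded rings (not merely of abelian groups) and that the whole package assembles into a morphism of $\Zn$-ringed spaces; here one uses that each $\phi_{ij}^*$ is, by hypothesis, a morphism of sheaves of $\Zn$-commutative local rings, so that all constructions respect the grading and the local ring structure automatically. No essentially new input from the higher $\Zn$-graded setting is required beyond what has been set up in the preliminaries.
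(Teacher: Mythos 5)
The paper states Lemma \ref{gluesmflds} without proof, treating it as standard, and your equalizer construction of $\cO_M$, the cocycle-based verification that the projections $\cO_M(V) \to \cO_{U_i}(V \cap |U_i|)$ are isomorphisms of $\Zn$-graded (local) ringed spaces, and the patching argument for uniqueness are precisely the standard argument being invoked; the proof is correct. The one point worth making explicit is that compatibility of the $f_i$ with the inclusions $|U_i| \hookrightarrow X$ together with $f_j \circ \phi_{ij} = f_i$ forces $|\phi_{ij}|$ to be the identity on $|U_i| \cap |U_j|$, which is exactly what makes your equalizer condition typecheck (sections over $V \cap |U_{ji}|$ pull back under $\phi_{ij}^*$ to sections over $V \cap |U_{ij}|$, the same subset of $X$).
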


\begin{lem}\label{gluemorph}
If $M, N$ are $\Zn$-supermanifolds, giving a morphism $f: M \to N$ is equivalent to giving an open cover $U_i$ of $M$ and morphisms $f_i$ such that $f_i|_{U_i \cap U_j} = f_j|_{U_i \cap U_j}$ for all $i, j$.  
\end{lem}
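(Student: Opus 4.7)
The plan is to verify both directions of the equivalence, with the forward direction being essentially trivial and the reverse direction relying on the sheaf axioms for $\cO_M$.

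For the forward direction, given a morphism $f = (|f|, f^*): M \to N$ and any open cover $\{U_i\}$ of $M$, I would simply set $f_i := f|_{U_i}$, i.e. the pair consisting of the restricted continuous map $|f|\big|_{|U_i|}: |U_i| \to |N|$ and the sheaf morphism obtained by restricting $f^*$ to $\cO_N(V) \to \cO_M(|f|^{-1}(V) \cap |U_i|)$ for $V \subseteq |N|$ open. Compatibility on double overlaps is then immediate from the fact that both $f_i|_{U_i \cap U_j}$ and $f_j|_{U_i \cap U_j}$ equal $f|_{U_i \cap U_j}$.

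For the reverse direction, first I would glue the topological data: since the continuous maps $|f_i|: |U_i| \to |N|$ agree on the overlaps $|U_i| \cap |U_j|$ and the $|U_i|$ form an open cover of $|M|$, the standard gluing lemma for continuous maps produces a unique continuous $|f|: |M| \to |N|$ with $|f|\big|_{|U_i|} = |f_i|$. Next, for any open $V \subseteq |N|$, I would construct $f^*_V: \cO_N(V) \to \cO_M(|f|^{-1}(V))$ by applying each $(f_i)^*_V$ to a section $s \in \cO_N(V)$, obtaining local sections $(f_i)^*_V(s) \in \cO_M(|f|^{-1}(V) \cap |U_i|)$. The hypothesis $f_i|_{U_i \cap U_j} = f_j|_{U_i \cap U_j}$ is exactly the statement that these local sections agree on the overlaps $|f|^{-1}(V) \cap |U_i| \cap |U_j|$, so by the sheaf axiom for $\cO_M$ they glue uniquely to a section on $|f|^{-1}(V)$ which I declare to be $f^*_V(s)$.

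It then remains to check that the assignment $V \mapsto f^*_V$ thus defined is indeed a morphism of sheaves of $\Zn$-graded $\R$-algebras (compatibility with restrictions, $\R$-linearity, multiplicativity, degree preservation), and that the resulting stalk maps $f^*_m: \cO_{N, |f|(m)} \to \cO_{M, m}$ are local ring morphisms. Each of these properties holds for the given $(f_i)^*$ and passes to the glued morphism because the sheaf axiom gives uniqueness of the glued section, so equations of sections can be checked locally on the cover. Uniqueness of $f$ is likewise immediate: any $f'$ restricting to $f_i$ on $U_i$ must agree with $f$ on the cover both topologically and at the level of the structure sheaves, and uniqueness in the sheaf axiom forces $f' = f$. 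I do not anticipate any real obstacle here; the only point to handle with a bit of care is the bookkeeping verifying that the glued $f^*_V$ interacts correctly with the restriction maps of $\cO_N$, but this is a direct consequence of the corresponding property for each $(f_i)^*$ together with the uniqueness clause of the sheaf axiom.
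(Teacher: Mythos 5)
Your proof is correct and is the standard gluing argument for morphisms of ($\Zn$-)ringed spaces: glue the continuous maps topologically, glue the pullbacks $(f_i)^*_V(s)$ via the sheaf axiom for $\cO_M$ over the cover $\{|f|^{-1}(V)\cap |U_i|\}$, and check the algebra-morphism and uniqueness properties locally using the uniqueness clause of the sheaf axiom. The paper states this lemma without proof (labelling it ``standard''), and your argument is precisely the one being implicitly invoked, so there is nothing to add.
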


Now we will show that the category of $\Zn$-supermanifolds admits finite products.

\begin{prop}
Let $M_i$ be $\Zn$-supermanifolds, $i = 1, 2$. Then there exists a $\Zn$-supermanifold $M_1 \times M_2$, unique up to unique isomorphism, with morphisms $\pi_i: M_1 \times M_2 \to M_i$, such that for any $\Zn$-supermanifold $N$ and morphisms $f_i: N \to M_i$, $i = 1, 2$ there exists a unique morphism $h: N \to M_1 \times M_2$ making the diagram:

\begin{equation*}
\begin{tikzcd}
N \arrow[bend left]{drr}{f_1} 
\arrow[bend right]{ddr}[swap]{f_2} 
\arrow[dotted]{dr}{h} & &\\
& M_1 \times M_2 \arrow{r}{\pi_1} \arrow{d}{\pi_2} & M_1\\
&M_2
\end{tikzcd}
\end{equation*}

\noindent commute.
\end{prop}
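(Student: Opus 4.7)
The plan is to build the product locally first, on $\Zn$-superdomains where we can appeal directly to the Chart Theorem, then use Lemma \ref{gluesmflds} to glue these local products into a global $\Zn$-supermanifold, and finally verify the universal property by gluing morphisms via Lemma \ref{gluemorph}.

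For the local step, given two superdomains $U_1 = (\R^{p_1}, \Ci_{\R^{p_1}}[[\xi]])$ of dimension $p_1|\vect{q}_1$ and $U_2 = (\R^{p_2}, \Ci_{\R^{p_2}}[[\eta]])$ of dimension $p_2|\vect{q}_2$, I would define
\[
U_1 \times U_2 := (\R^{p_1+p_2},\; \Ci_{\R^{p_1+p_2}}[[\xi,\eta]])
\]
of dimension $(p_1+p_2)|(\vect{q}_1+\vect{q}_2)$, with the projections $\pi_i$ induced by the obvious inclusions of coordinate tuples. The universal property here follows immediately from the Chart Theorem: a morphism $N \to U_i$ is the same as an ordered homogeneous tuple of appropriate degree in $\Gamma(\cO_N)$, so giving $f_1, f_2$ is the same as giving the concatenated tuple, which is precisely a morphism $h: N \to U_1 \times U_2$. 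The identities $\pi_i \circ h = f_i$ and uniqueness of $h$ are then formal.

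For the global step, I would choose open covers $\{U_\alpha^{(1)}\}$ and $\{U_\beta^{(2)}\}$ of $M_1$ and $M_2$ by coordinate superdomains. The topological product $|M_1| \times |M_2|$ is second-countable Hausdorff and covered by $\{|U_\alpha^{(1)}| \times |U_\beta^{(2)}|\}$. I would form the superdomains $U_\alpha^{(1)} \times U_\beta^{(2)}$ as above, and define the gluing morphisms on overlaps $(U_\alpha^{(1)} \cap U_{\alpha'}^{(1)}) \times (U_\beta^{(2)} \cap U_{\beta'}^{(2)})$ by pulling back coordinates through the transition isomorphisms of each factor (again, an application of the Chart Theorem to specify each component morphism). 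The cocycle conditions required by Lemma \ref{gluesmflds} reduce to the cocycle conditions for the transition data of $M_1$ and $M_2$ separately, and so are automatic. This produces $M_1 \times M_2$ together with globally defined projections $\pi_i$ obtained by gluing the local projections.

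For the universal property in the global case, given $f_i: N \to M_i$, I would let $V_{\alpha\beta} := f_1^{-1}(U_\alpha^{(1)}) \cap f_2^{-1}(U_\beta^{(2)})$; on each $V_{\alpha\beta}$, the restrictions $f_i\big|_{V_{\alpha\beta}}$ land in coordinate superdomains, and by the local universal property they determine a unique morphism $h_{\alpha\beta}: V_{\alpha\beta} \to U_\alpha^{(1)} \times U_\beta^{(2)}$. The main technical point --- and what I expect to be the principal obstacle --- is verifying that these $h_{\alpha\beta}$ agree on overlaps $V_{\alpha\beta} \cap V_{\alpha'\beta'}$ so that Lemma \ref{gluemorph} produces a global $h$. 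This compatibility will follow from the uniqueness half of the local universal property applied on each overlap: both $h_{\alpha\beta}|_{V_{\alpha\beta}\cap V_{\alpha'\beta'}}$ and $h_{\alpha'\beta'}|_{V_{\alpha\beta}\cap V_{\alpha'\beta'}}$ (after composition with the gluing isomorphism of $M_1 \times M_2$) are morphisms into the same coordinate chart whose compositions with $\pi_1, \pi_2$ equal $f_1, f_2$, hence they coincide. Uniqueness of $h$ globally then follows from uniqueness on each $V_{\alpha\beta}$, and uniqueness of $M_1 \times M_2$ up to unique isomorphism is formal from the universal property.
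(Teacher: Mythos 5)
Your proof is correct and rests on the same two pillars as the paper's: the Chart Theorem to produce the product of model superdomains together with its universal property, and Lemmas \ref{gluesmflds} and \ref{gluemorph} to globalize. The organization differs slightly. You glue in a single pass over the doubly-indexed cover $\{U_\alpha^{(1)}\times U_\beta^{(2)}\}$ of $|M_1|\times|M_2|$, writing the transition isomorphisms explicitly in coordinates; the paper instead glues one factor at a time (first $M_1\times V$ for $V$ a superdomain, then $M_1\times M_2$) and obtains its transition maps abstractly, as the unique isomorphisms between two realizations of the same product. The paper's route buys one small convenience that you should be aware of: its Step 2 shows that $\pi_1^{-1}(U)$ is the product $U\times M_2$ for any \emph{open} subsupermanifold $U$, which derives the universal property for products of open subdomains formally from the case of the full model spaces $\R^{p_i|\mathbf{q}_i}$. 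In your version the local building blocks are products of open subdomains $\Omega_1\times\Omega_2$, so your appeal to the Chart Theorem needs the standard refinement for morphisms into an open subdomain (the reduced components must land in the open set); this is covered by the fundamental theorem of morphisms cited in the paper, but it is worth stating, or alternatively inserting the analogue of the paper's Step 2. Your verification of compatibility of the $h_{\alpha\beta}$ on overlaps via the uniqueness half of the local universal property is exactly the paper's argument.
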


\begin{proof}
We begin by noting that if such a product exists, it is uniquely determined up to unique isomorphism by the universal property it is required to satisfy; we will use this uniqueness repeatedly in the proof. We shall construct the product $\Zn$-supermanifold $M \times N$ in steps.\\

\noindent {\bf Step 1.} For the case where $M_1 = \R^{p_1|{\bf q}_1}$, $M_2 = \R^{p_2|{\bf q}_2}$, we show that $M_1 \times M_2$ is $\R^{p_1 + p_2|{\bf q_1} + {\bf q}_2}$, with the morphisms $\pi_i$ given by the corresponding projections onto the factors $\R^{p_i|{\bf q}_i}$.

The Chart Theorem tells us that morphisms $N \to \R^{p_1 + p_2|{\bf q}_1 + {\bf q}_2}$ are in canonical bijection with the set of ordered $p_1 + p_2 |{\bf q}_1 + {\bf q}_2$-tuples of global sections of $\cO_N$. But such an ordered tuple is the same thing as an ordered pair consisting of an ordered $p_1|{\bf q_1}$-tuple of global sections of $\cO_N$, and an ordered $p_2|{\bf q_2}$-tuple of global sections of $\cO_N$. Since ordered $p_1|{\bf q_1}$- (resp. ordered $p_2|{\bf q_2}$-) tuples of global sections of $\cO_N$ are in canonical bijection with morphisms $N \to M_1$ (resp. morphisms $N \to M_2$), we see that to any pair of morphisms $f_i: N \to \R^{p_i|{\bf q}_i}$, there is canonically associated a unique morphism $h: N \to \R^{p_1 + p_2|{\bf q}_1 + {\bf q}_2}$. That this $h$ makes the stated diagram commute is straightforward to prove.\\

\noindent {\bf Step 2.} Now suppose $M_i$ are $\Zn$-supermanifolds such that the product $M_1 \times M_2$ exists, and let $U$ be an open subsupermanifold of $M_1$. Then $\pi_1^{-1}(U)$ is the product of $U$ and $M_2$. To see this, let $f_1: N \to U, f_2: N \to M_2$ be morphisms. Then by composing with the inclusion $U \to M_1$, we have a morphism $\widetilde{f}_1: N \to M_1$. By the universal property of the product $M_1 \times M_2$, there is a unique morphism $h: N \to M_1 \times M_2$ which is compatible with $\widetilde{f}_1, f_2$ and the projections to the $M_i$. However, as $|\widetilde{f}_1|(|N|) \subseteq |U|$, we have $|h|(|N|) \subseteq \pi^{-1}_1(U)$, whence we may regard $h$ as a morphism $N \to \pi^{-1}(U)$. $h$ is readily seen to be the unique morphism compatible with the $f_i$ and the projections to $U$ and $M_2$, hence $\pi^{-1}_1(U)$ is the product $U \times M_2$. The same argument works to show that if $V$ is an open subsupermanifold of $M_2$, $\pi^{-1}_2(V)$ is the product $M_1 \times V$.\\

\noindent {\bf Step 3.} Let $V$ be a $\Zn$-superdomain in $\R^{p_2|{\bf q}_2}$. We prove existence of the product $M_1 \times V$, for $M_1$ an arbitrary $\Zn$-supermanifold. Fix an open cover $\{U_i\}_{i \in I}$ of $M_1$ by $\Zn$-superdomains. The products $U_i \times V$ exist by steps 1 and 2. Let $U_{ij} = U_i \cap U_j$. Then by step 2, $W_{ij} := \pi_1^{-1}(U_{ij})$ are the products $U_{ij} \times V$. (We note that the $U_i \times V$, hence the $W_{ij}$, are $\Zn$-superdomains). Uniqueness of products implies that there are unique isomorphisms $\phi_{ij}: W_{ij} \to W_{ji}$ for each $i, j$ which are compatible with the projection morphisms to $U_{ij}$ and $V$.

One checks that the $\phi_{ij}$ satisfy the hypotheses of Lem. \ref{gluesmflds}, hence the $U_i \times V$ may be glued together via the isomorphisms $\phi_{ij}$, yielding a $\Zn$-supermanifold which we denote by $M_1 \times V$. ({\it A priori}, this is an abuse of notation, but we will show that $M_1 \times V$ so defined indeed satisfies the universal property characterizing the product of $M_1$ and $V$). By Lem. \ref{gluemorph}, the projections $(\pi_1)_i, (\pi_2)_i$ to $U_i$ and $V$ glue together to form morphisms $\pi_1, \pi_2$ to $M_1$ and $V$. Now let $f: N \to M_1$, $g: N \to V$ be morphisms, and let $N_i := f^{-1}(U_i)$. By Step 2, this gives unique morphisms $h_i: N_i \to U_i \times V$ compatible with $f|_{N_i}, g|_{N_i}$ and the projections $(\pi_k)_i$ from $U_i \times V$ to $U_i$ and $V$; composing the $h_i$ with the inclusions $U_i \times V \to M_1 \times V$ gives morphisms from $N_i \to M_1 \times V$ which we continue to denote by $h_i$. One checks that $h_i$ and $h_j$ agree on $N_i \cap N_j$ for any $i,j$, hence by Lem. \ref{gluemorph} they may be glued into a morphism $h: N \to M_1 \times V$. One sees that $h$ commutes with $f, g$ and the projections $\pi_k$ to $M_1$ and $V$ from the corresponding properties of the $h_i$. The uniqueness of $h$ follows from the uniqueness of the $h_i$. Hence we have shown that $M_1 \times V$ is the product of $M_1$ and $V$.\\ 

\noindent {\bf Step 4.} Once we know the existence of the product $U \times M_2$ for $U$ an open subsupermanifold $U$ of $\R^{p_1|{\bf q}_1}$, the same argument we just gave to prove the existence of the product $U_1 \times M_2$ goes through to prove the existence of the product $M_1 \times M_2$. The details are left to the reader.

\end{proof}

It is clear that one can iterate this construction to obtain the product $M_1 \times M_2 \times \dotsc \times M_k$ of any finite family of supermanifolds $M_i$, $i = 1, \dotsc, k$, so that the category $\Zn\text{-}\mathtt{Man}$ admits arbitrary finite products.

\section{Local forms of morphisms}

\subsection{Inverse function theorem} \label{ssec:IFT}

We begin with the analogue of the inverse function theorem, which plays as fundamental a role in $\Zn$-supergeometry as the classical version does in ungraded geometry.

\begin{thm}\label{InvFunThm}
Let $\phi: M \to N$ be a morphism of $\Zn$-supermanifolds, and $m \in |M|$ be a point. Then the following are equivalent.
\begin{enumerate}
\item $d\phi_m$ is invertible;
\item there exist coordinate charts $U$ about $m$, $V$ about $\phi(m)$ such that $\phi|_U: U \to V$ is a diffeomorphism.
\end{enumerate}
\end{thm}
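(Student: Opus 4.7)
The implication $(2) \Rightarrow (1)$ is immediate from the chain rule: if $\psi$ denotes a local inverse of $\phi|_U$, applying $d(-)$ to both $\psi\circ\phi=\id_U$ and $\phi\circ\psi=\id_V$ yields, via Proposition~\ref{prop:tgtcomp}, that $d\phi_m$ admits $d\psi_{\phi(m)}$ as two-sided inverse.

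For $(1) \Rightarrow (2)$, I would first pass to charts. Choose coordinate neighborhoods $(U,u=(x,\xi))$ about $m$ and $(V,v=(y,\eta))$ about $\phi(m)$, both centered at the respective points, with $\phi(U)\subset V$. By Remark~\ref{rem:tgmap-Jac}, invertibility of $d\phi_m$ forces $\dim M=\dim N=p|\vect{q}$ and invertibility of each diagonal block $B_0,B_{\gamma_1},\ldots,B_{\gamma_N}$. In particular, $B_0=(\partial y/\partial x)|_m$ is the Jacobian at $m$ of the reduced smooth map $|\phi|:|U|\to|V|$, so the classical inverse function theorem permits us, after shrinking $U$ and $V$, to assume $|\phi|$ is already a diffeomorphism of the underlying manifolds.

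Next, I would construct the inverse morphism $\psi:V\to U$ by building its pullbacks $\psi^*(u^i)\in\cO(V)$ (each of the correct $\Zn$-degree) iteratively modulo successive powers of $\cJ$. The zeroth approximation on the reduced level is given by the classical inverse of $|\phi|$, together with $\psi^*(\xi^a)\in\cJ$. The algebraic engine driving the induction is that the graded Jacobian $\op{Jac}\phi\in\mathrm{Mat}_{p|\vect{q}}(\cO(U))$ has reduction at $m$ equal to the invertible block matrix $d\phi_m$ (Remark~\ref{rem:tgmap-Jac}); a standard lemma on matrices over $\cJ$-adically Hausdorff complete rings (of the type collected in the paper's appendix) then gives, after possibly shrinking $U$, that $\op{Jac}\phi$ is itself invertible over $\cO(U)$. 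At each stage $k$, Taylor-expanding the desired identity $\psi^*\phi^*(v^b)=v^b$ modulo $\cJ^{k+1}$ produces a linear equation on the next correction whose coefficient matrix is $\op{Jac}\phi$, and hence uniquely solvable.

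The main obstacle is taking the limit of this iteration. In classical ($n=1$) supergeometry $\cJ$ is nilpotent, so the procedure terminates after finitely many steps; but in the $\Zn$ setting with $n\geq 2$, nonzero-degree coordinates of even $\Zn$-parity are generally not nilpotent, and the iteration genuinely produces an infinite sequence of approximations. Here the $\cJ$-adic Hausdorff completeness of $\cO$ is essential: the successive approximations form a $\cJ$-adic Cauchy sequence in $\cO(V)$ and therefore converge to well-defined limits $\psi^*(u^i)\in\cO(V)$. The Chart Theorem applied to these limits defines a morphism $\psi:V\to U$, and one verifies $\psi\circ\phi=\id_U$ and $\phi\circ\psi=\id_V$ hold modulo every power of $\cJ$, hence in full, again by the Hausdorff property.
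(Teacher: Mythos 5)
Your argument is correct, and it rests on the same three pillars as the paper's proof --- the chain rule for $(2)\Rightarrow(1)$, the classical inverse function theorem on the reduced level, and $\cJ$-adic Hausdorff completeness to tame the infinitely many higher-order corrections forced by the non-nilpotency of even nonzero-degree coordinates --- but the analytic core is organized differently. The paper spends the invertibility of the diagonal blocks $B_0,B_{\gamma_1},\dots,B_{\gamma_N}$ of $d\phi_m$ on a preliminary change of coordinates $\tau$ on the source (new even coordinates $\widetilde{x}$ supplied by the classical theorem, and new coordinates $\widetilde{\xi}^a_\mu$ in each nonzero degree obtained by applying the invertible matrix $B_\mu$ of lowest-order coefficients to the $\xi^b_\mu$), after which $\tau^*\circ\phi^*$ sends coordinates to themselves plus terms in $\cJ$; the inverse is then produced in one stroke as the geometric series $\sum_k(-1)^kJ^k$, convergent by completeness. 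You instead leave $\phi$ untouched and run a Hensel--Newton iteration, solving for $\psi^*(u^i)$ modulo $\cJ^{k+1}$ against the graded Jacobian, whose invertibility over $\cO(U)$ (after shrinking $U$) does follow from the pointwise invertibility of its diagonal blocks via Proposition~\ref{invertibleblockdiag}. The paper's normalization makes convergence essentially immediate at the cost of a coordinate change; your iteration avoids the normalization but needs two small additions to be airtight: first, the coefficient matrix of the linearized equation at stage $k$ is really the Jacobian transported along the $k$-th approximation of $\psi$, which remains invertible because invertibility over a Hausdorff complete ring depends only on the reduction mod $\cJ$; second, the construction directly yields only the one-sided identity $\phi\circ\psi=\id_V$, so you should note that $d\psi_{|\phi|(m)}$ is again invertible and repeat the argument (or invoke uniqueness of inverses modulo each power of $\cJ$) to obtain $\psi\circ\phi=\id_U$. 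Both points are routine, so your proposal is a complete and valid alternative route.
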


\begin{proof}
That $\phi$ being a diffeomorphism around $m$ implies invertibility of $d\phi_m$ is an easy consequence of the chain rule.

Now let us suppose that $d\phi_m$ is invertible; in particular, this implies $\dim(M) = \dim(N)$. As the statement is local, we may assume from the beginning that $M$ and $N$ are $\Zn$-superdomains $U^{p|{\bf q}}$ and $V^{p|{\bf q}}$ respectively, and that $m = 0$. Let $\mu \in \Zn \backslash \{0\}$. Let $(x, \xi)$ and $(y, \eta)$ be coordinates on $U, V$ respectively. In order to keep track of the $\Zn$-degrees of the variables, we have introduced in Section \ref{sec:prelim} the notation $\xi^j_\mu$ to indicate a coordinate of $\Zn$-degree $\mu$; similarly for the $\eta^s_{\mu}$.

By the Chart Theorem for $\Zn$-supermanifolds \cite[Thm. 6.8]{CGPa}, we have:
\begin{align*}
&\phi^*(y^r) = f_0^r(x)+\sum_{\substack{|P| > 0\\ \deg(\xi^P)=0}} f_P^r(x)\, \xi^P\\
&\phi^*(\eta^s_\mu) = \sum_{\substack{|Q| \geq 1 \\ \deg(\xi^Q)=\mu}} g_{Q}^s(x)\, \xi^Q
										= \sum_{k=1}^{q_{\mu}} g_{\mathcs{E}(\mu,k)}^s(x) \,\xi^k_{\mu} + \ldots
\end{align*}
where, for $\mu$ the $i$-th non-zero degree in $\Zn$ following the standard order, $\mathsc{E}(\mu,k)=(0,\ldots, 1, \ldots, 0)$ with $1$ at the entry $l:= q_1+ \ldots + q_{i-1}+k$.
Then, recalling the matrix form of the differential of $\phi$ at $m$ \eqref{tgmapmtrx}, the hypothesis that the differential be invertible is equivalent to the assumption that the block matrices

\begin{align*}
B_0
= \begin{pmatrix}
\frac {\partial f^1_0} {\partial x^1} & \dotsc & \frac {\partial f^1_0} {\partial x^p} \\
\vdots & & \vdots\\
\frac {\partial f^p_0} {\partial x^1} & \dotsc & \frac {\partial f^p_0} {\partial x^p}
\end{pmatrix}
\hspace{7mm}
B_\mu = \begin{pmatrix}
g^1_{\mathsc{E}(\mu,1)} & \dotsc & g^1_{\mathsc{E}(\mu, q_k)}  \\
\vdots & & \vdots\\
g^{q_\mu}_{\mathsc{E}(\mu,1)} & \dotsc & g^{q_\mu}_{\mathsc{E}(\mu, q_k)}
\end{pmatrix}
\end{align*}\\

\noindent are invertible for all $\mu \in \Zn \backslash \{0\}$. By the classical inverse function theorem, there exists an open subset $W^{p|{\bf q}}$ such that the $\{ f^i_0 \}$ define a new coordinates of degree $0$, denoted by $\widetilde{x}$. Furthermore, using the invertible matrices $B_{\mu}$ we can also change the coordinates of each non-zero degree $\mu$, by
$$
\widetilde{\xi}^a_{\mu}:= \sum_{b} g^a_{\mathsc{E}(\mu,b)}(x) \,\xi^b_{\mu}
$$
finally obtaining a new system of coordinates $(\widetilde{x}, \widetilde{\xi}\,)$ near the considered point.

Denoting this change of coordinates by $\tau$, we have:
\begin{align*}
&(\tau^* \circ \phi^*)(y^j) = \widetilde{x}^j + \sum_{|P| \geq 2} \widetilde{f}^j_P(\widetilde{x})\,\widetilde{\xi}^P\\
&(\tau^* \circ \phi^*)(\eta^s_\mu) = \widetilde{\xi}^s_\mu + \sum_{|Q| \geq 2} \widetilde{g}^j_{Q} (\widetilde{x})\,\widetilde{\xi}^Q
\end{align*}
We see from this that $\tau^* \circ \phi^*$ is of the form $\I + J$, where $\I$ is the identity matrix and $J$ is a matrix whose entries lie in $\mathcal{J}$. Hence $\tau^* \circ \phi^*$ has a formal inverse $\psi$ given by the geometric series $\sum_k^\infty (-1)^k J^k$, which converges to a unique matrix by $\cJ$-adic Hausdorff completeness of $\mathcal{O}$. Thus $\phi^*$ is actually invertible, and the Chart Theorem implies that $\phi$ is a diffeomorphism.
\end{proof}

\subsection{Immersions and submersions}

\begin{defn}
A morphism $\phi:M\to N$ of $\Zn$-supermanifolds is an \emph{immersion} (resp. a \emph{submersion}) at a point $m\in |M|$ if its tangent map at this point $d\phi_m$ is injective (resp. surjective).
\end{defn}
Since it is a local property, we may for simplicity restrict ourselves in this section to considering $\Zn$-superdomains instead of general $\Zn$-supermanifolds.

Let us consider a morphism $\phi: U \to V$ between $\Zn$-superdomains $U^{p|\mathbf{q}}$ and $V^{r|\mathbf{s}}$, with coordinate systems $u=(x,\xi_{\mu})$ and $v=(y,\theta_{\mu})$, respectively.  
At a point $m\in U$, the differential $d\phi_{m}$ is a linear map between $\Zn$-graded vector spaces, which is represented by a block-diagonal matrix \eqref{tgmapmtrx} in $\gl^0(r|\mathbf{s}\times p|\mathbf{q}; \R)$.
The rank of such a graded matrix is intuitively given by the ranks of its diagonal blocks (which are classical ungraded real matrices). Hence,

\begin{itemize}
\item $\phi$ is an immersion at $m$ if and only if $\op{rank}(d\phi_{m})=p|\mathbf{q}$\;;
\item $\phi$ is an submersion at $m$ if and only if $\op{rank}(d\phi_{m})=r|\mathbf{s}$\;.
\end{itemize}

We have moreover the following results.
\begin{prop}\label{im-sub}
Let $\phi:M\to N$ be a morphism of $\Zn$-supermanifolds, $M$ of dimensions $p|\mathbf{q}$ and $N$ of dimension $r|\mathbf{s}$, and $m$ a point of $M$.
	\begin{enumerate}
			\item\label{immersion} Let $p\leq r$ and $q_j\leq s_j$, for all  $j$. The map $\phi$ is an immersion at $m$ if and only if there exists $\Zn$-supercharts $(U,(x,\xi_{\mu}))$ around $m$ and $(V,(y,\theta_{\mu}))$ around $|\phi|(m)\in |N|$ on which it has the form
			\beas
			\phi^*_V(y^i)=\begin{cases}
														x^i &\mbox{ if } 1\leq i\leq p \\
														0 &\mbox{ if } p+1\leq i\leq r
										\end{cases}
			& \mbox{ and } &
			\phi^*_V(\theta_{\mu}^{a})=\begin{cases}
																					\xi_{\mu}^{a} &\mbox{ if } 1\leq a\leq q_{\mu} \\
																					0 &\mbox{ if } q_{\mu}+1\leq a\leq s_{\mu}
																				\end{cases}
			\eeas
			for all $\mu\in\Zn\setminus\{0\}$. Here we denote by $q_{\mu}$ the entry of $\mathbf{q}$ corresponding to the degree $\mu$, and analogously for $s_{\mu}$.
			
			\item\label{submersion} Let $p\geq r$ and $q_j\geq s_j$, for all $j$. The map $\phi$ is a submersion at $m$ if and only if there exists $\Zn$-supercharts $(U,(x,\xi_{\mu}))$ around $m$ and $(V,(y,\theta_{\mu}))$ around $|\phi|(m)\in |N|$ on which it has the form
			\bea\label{submdef}
			\phi^*_V(y^i)= x^i \,;\quad 1\leq i\leq r
			& \mbox{ and } &
			\phi^*_V(\theta_{\mu}^{a})= \xi^{a}_{\mu}\;, \quad 1\leq a\leq s_{\mu}\;,
			\eea
			for all $\mu\in\Zn\setminus\{0\}$.
	\end{enumerate}
\end{prop}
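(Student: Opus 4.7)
Both ``if'' directions are immediate: under the stated coordinate forms, the graded Jacobian $\op{Jac}\phi|_m$ is block-diagonal with each block being a standard inclusion (respectively projection) matrix, giving the required rank. We therefore focus on the ``only if'' directions. The common strategy is to invoke the Inverse Function Theorem (Theorem \ref{InvFunThm}) after enlarging the source of $\phi$ (in the immersion case) or its target (in the submersion case) by the ``missing'' dimensions.

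Consider case (\ref{immersion}). Since $d\phi_m$ is injective, each block $B_\mu$ of its matrix form \eqref{tgmapmtrx} is an injective real matrix (here $\mu$ ranges over $\{0\} \cup (\Zn\setminus\{0\})$, with the convention $q_0 := p$, $s_0 := r$). By a linear change of coordinates on $V$ carried out degree by degree, we may assume each $B_\mu$ is in canonical inclusion form, i.e.\ $(\partial_{u^a}\phi^*(v^b))|_m = \delta^b_a$ on the ``low'' indices and $0$ on the ``high'' ones. Such a block-diagonal $\R$-linear change is of $\Zn$-degree zero, and hence assembles (via the Chart Theorem) into a bona fide $\Zn$-supermanifold diffeomorphism of a neighborhood of $|\phi|(m)$ in $V$.

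We then introduce a $\Zn$-superdomain $W$ of dimension $(r-p)|(\mathbf{s}-\mathbf{q})$, with coordinates $(\tilde x^i, \tilde\xi^a_\mu)$ indexed so as to fill in the missing target indices, and define $\Phi : U \times W \to V$ via the Chart Theorem by
\[
\Phi^*(y^i) = \begin{cases} \pi_U^*\phi^*(y^i) & i\leq p,\\ \pi_U^*\phi^*(y^i) + \tilde x^i & i > p, \end{cases}
\quad
\Phi^*(\theta_\mu^a) = \begin{cases} \pi_U^*\phi^*(\theta_\mu^a) & a \leq q_\mu,\\ \pi_U^*\phi^*(\theta_\mu^a) + \tilde\xi_\mu^a & a > q_\mu, \end{cases}
\]
where $\pi_U$ is the projection $U \times W \to U$. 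A direct computation shows that $d\Phi_{(m,0)} = \I$, so Theorem \ref{InvFunThm} yields a local inverse $\Phi^{-1}$. Relabelling the pullbacks $(\Phi^{-1})^*(x^i)$, $(\Phi^{-1})^*(\xi_\mu^a)$, $(\Phi^{-1})^*(\tilde x^i)$, $(\Phi^{-1})^*(\tilde \xi_\mu^a)$ as new coordinates $(y, \theta)$ on a neighborhood of $|\phi|(m)$ in $V$, and using that $\phi = \Phi \circ \iota$ for $\iota : U \hookrightarrow U \times W$ the inclusion at $0 \in W$, we see that $\phi^*$ becomes the claimed canonical immersion form, because $\iota^*$ kills the $W$-coordinates and is the identity on the $U$-coordinates.

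Case (\ref{submersion}) is entirely dual: after an analogous linear reduction making each $B_\mu$ the canonical projection, we construct $\Phi : U \to V \times W$ with $W$ of dimension $(p-r)|(\mathbf{q}-\mathbf{s})$, defining $\Phi^*$ to agree with $\phi^*$ on the $V$-coordinates and to send the $W$-coordinates to the ``extra'' $U$-coordinates; once more $d\Phi_m = \I$, and Theorem \ref{InvFunThm} converts $\Phi$ into a change of coordinates on $U$ in which $\phi$ is manifestly the projection \eqref{submdef}. The only genuinely subtle point is the justification of the initial block-diagonal linear reduction as a valid $\Zn$-supermanifold morphism --- which, as noted, follows from the zero-degree nature of the change together with the Chart Theorem. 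Once past this point, the argument proceeds essentially as in classical $\Zs$-supergeometry.
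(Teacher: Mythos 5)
Your proposal is correct and follows essentially the same route as the paper: for the submersion you factor $\phi$ through a map $U\to V\times W$ that restricts to $\phi^*$ on the $V$-coordinates and to the remaining $U$-coordinates on $W$, check its differential is invertible, and invoke Theorem \ref{InvFunThm} to turn it into a chart on $U$ --- which is exactly the paper's argument. The paper treats only the submersion case in detail and declares the immersion case analogous; your dual construction $\Phi:U\times W\to V$ with $\phi=\Phi\circ\iota$ is the standard way to fill in that omitted half and is sound.
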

In other words, $\phi$ is an immersion (resp. a submersion) at $m$ if and only if in a neighborhood of $m$ and $\phi(m)$ there exist coordinates in which $\phi$ is a linear injection (resp. a linear projection).

\begin{proof}
As already mentioned, without loss of generality we may restrict ourselves to the case of a morphism between $\Zn$-superdomains $\phi:U\to V$, using the above notations.

The proofs of the two claims \eqref{immersion} and \eqref{submersion} are analogous, and follow without major modifications the reasoning used to prove the super case (see e.g. \cite{Lei}).
For the sake of completeness, we will describe here the proof for the submersion property \eqref{submersion}, which is usually left in the super case in favor of the one for the immersion property.

First, given a morphism $\phi:U\to V$ defined in a neighborhood of $m$ by \eqref{submdef}, one directly verify that $d\phi_{m}$ is of the form
$$
d\phi_{u_0}= \left(\begin{array}{cc|cc|cc|cc}
								\mathbb{I}& 0 & & & & & &\\
								\hline
								& &\mathbb{I}&0  & & & &\\
								\hline
								& & & & \ddots & & &\\
								\hline
								& & & & & &\mathbb{I}& 0
						\end{array}\right)
						\in \gl^0(r|\mathbf{s}\times p|\mathbf{q};A)\;.
$$
which is of rank $r|\mathbf{s}$. Thus, $\phi$ is a submersion at $m$.

For the converse, up to reordering of the coordinates we can assume that the sub-blocks

\begin{align*}
&\left(\frac{\partial \phi^*(y^k)}{\partial x^i}\Big|_{m}\right)_{i,k=1, \ldots,r} \mbox{ of the diagonal block $B_0$ } \\
\mbox{and } & \left(\frac{\partial \phi^*(\theta_{\mu}^a)}{\partial \xi_{\mu}^b}\Big|_{m}\right)_{a,b=1, \ldots,s_{\mu}} \mbox{ of the diagonal blocks $B_{\mu}$}\;,
\end{align*}

are invertible matrices.

Let us consider the $\Zn$-superdomain $\R^{p-m|\mathbf{q}-\mathbf{s}}$ and relabel its coordinate as $(v^{r+i}, \theta_{\mu}^{s_{\mu}+a_{\mu}})$, $1\leq i \leq p-m$ and $1\leq a_{\mu}\leq q_{\mu}-s_{\mu}$. Thanks to the fundamental theorem of $\Zn$-morphisms \cite[Theorem 6.8]{CGPa}, we can construct a morphism $\psi:U'\to V\times \R^{p-m|\mathbf{q}-\mathbf{s}}$ in a neighborhood $U'$ of $m$ by setting
$$
\psi^*(v^i)= \begin{cases}
								\phi^*(v^i) & \mbox{ if } 1\leq i\leq r\\
								u^i & \mbox{ if } r+1\leq i\leq p
						\end{cases}
$$
and, for all $\mu\in\Zn\setminus\{0\}$,
$$
\psi^*(\theta_{\mu}^{a})= \begin{cases}
																\phi^*(\theta_{\mu}^{a}) & \mbox{ if } 1\leq a\leq s_{\mu}\\
																\xi_{\mu}^{a} & \mbox{ if } s_{\mu}+1\leq a\leq q_{\mu}
															\end{cases}
\;.$$
By construction, we then have that in a neighborhood of $m$ the original morphism may be written as $\phi=\op{proj}_V\circ \psi$ where $\op{proj}_V:V\times \R^{p-m|\mathbf{q}-\mathbf{s}} \to V$ is the natural projection.
Moreover, it is straightforward to check that $d\psi_{m}$ is invertible, and so by Theorem \ref{InvFunThm} $\psi$ is a local diffeomorphism in a neighborhood of $m$.
Hence, the claim \eqref{submersion} follows.
\end{proof}

\subsection{The constant rank theorem}
\begin{defn}
Let $A$ be a $J$-adically Hausdorff complete $\Zn$-supercommutative ring, and let $Z \in \gl^0({ m|{\bf n}\times p|{\bf q} };A)$. The graded matrix $Z$ has {\it constant rank} $r|{\bf s}$ if there exist $G_1 \in \GL(m|{\bf n}; A)$ and $G_2 \in \GL(p|{\bf q}; A)$ such that 

\be\label{cstrank}
G_1 Z G_2 = 
\left(\begin{array}{c|c|c|c}
\begin{array}{cc}
\mathbb{I}_r & 0\\
0 & 0
\end{array} 
									&    &  &    \\
\hline
 &  \begin{array}{cc}
		\mathbb{I}_{s_1} & 0\\
			0 & 0
		\end{array}  						&    &  \\
\hline
 &  & \ddots  &  \\
\hline
  &   &  & \begin{array}{cc}
\mathbb{I}_{s_N} & 0\\
0 & 0
\end{array} \\
\end{array}\right).
\ee
\end{defn}

It is straightforward to see that if a matrix $Z$ of graded functions on a $\Zn$-superdomain $U$ has constant rank $r|\mathbf{s}$, then this matrix evaluated at a point $m\in |U|$ is a block-diagonal matrix $Z|_m$ of rank $r|\mathbf{s}$. As in the super case, the converse is not true.

\begin{thm}[Constant rank theorem on $\Zn$-supermanifolds]
Let $\phi:M\to M'$ be a morphism of $\Zn$-supermanifolds, of respective dimensions $p|\mathbf{q}$ and $p'|\mathbf{q'}$, $m\in |M|$. Then the following are equivalent.
\begin{enumerate}
	\item In a neighborhood of $m$, $\op{Jac}_{\phi}$ is a graded matrix of constant rank $r|\mathbf{s}$;
	\item In a neighborhood $U$ of $m$, $\phi$ may be written as the composite of a submersion $\phi_1:U\to W$  at $m$ and an immersion $\phi_2:W\to V$ at $|\phi_1|(m)$.
\end{enumerate}
\end{thm}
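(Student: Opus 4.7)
The direction $(2) \Rightarrow (1)$ is a direct consequence of the chain rule together with Proposition \ref{im-sub}. In coordinate charts on $U$, $W$, and $V$ in which $\phi_1$ takes the submersion normal form $(u', u'') \mapsto u'$ and $\phi_2$ takes the immersion normal form (after reconciling the two coordinate systems on $W$ by a further local diffeomorphism whose graded Jacobian is invertible), the composite $\phi = \phi_2 \circ \phi_1$ has graded Jacobian of the form $\begin{pmatrix} G & 0 \\ 0 & 0 \end{pmatrix}$ with $G$ invertible. Left- and right-multiplication by the invertible graded Jacobians of the coordinate changes on $U$ and $V$ then provides the matrices $G_1, G_2$ of (\ref{cstrank}) that bring $\op{Jac}_\phi$ in the \emph{original} coordinates to the required normal form, establishing constant rank $r|\mathbf{s}$ in a neighborhood of $m$.

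For the converse $(1) \Rightarrow (2)$, which is the substantive content of the theorem, my plan is to mimic the classical constant-rank argument, the essential ingredients being the inverse function theorem (Theorem \ref{InvFunThm}) and the $\cJ$-adic Hausdorff completeness of $\cO_M$. Reduce to the local setting $\phi: U \to V$ between $\Zn$-superdomains, with coordinates $u = (x, \xi_\mu)$ on $U$ near $m$ and $v = (y, \theta_\mu)$ on $V$ near $|\phi|(m)$. By Remark \ref{rem:tgmap-Jac}, the evaluation $\op{Jac}_\phi|_m$ coincides with the block-diagonal matrix $d\phi_m$, whose block $B_0$ has classical rank $r$ and each block $B_\mu$ has classical rank $s_\mu$. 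After permuting coordinates of each homogeneous degree on both $U$ and $V$, one may assume that the upper-left $r \times r$ sub-block of $B_0|_m$ and the upper-left $s_\mu \times s_\mu$ sub-block of each $B_\mu|_m$ are invertible real matrices.

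Following the standard submersion-style construction, I would then define an endomorphism $\psi: U \to U$ of $\Zn$-superdomains by $\psi^*(x^i) = \phi^*(y^i)$ for $1 \leq i \leq r$, $\psi^*(\xi^a_\mu) = \phi^*(\theta^a_\mu)$ for $1 \leq a \leq s_\mu$, and $\psi^*(x^i) = x^i$, $\psi^*(\xi^a_\mu) = \xi^a_\mu$ on the remaining coordinates. A direct computation shows that $d\psi_m$ is block upper-triangular with the invertible sub-blocks above on its diagonal, hence invertible; by Theorem \ref{InvFunThm}, $\psi$ is a local diffeomorphism near $m$. Using $\psi^{-1}$ to change coordinates on $U$, one brings $\phi$ into the form in which its ``first'' $r|\mathbf{s}$ components are literally the corresponding new coordinates, while the remaining components are certain sections $\widetilde{f}^j, \widetilde{g}^a_\mu \in \cO(U)$ of all the new coordinates. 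In these coordinates $\op{Jac}_\phi$ has the $2 \times 2$ block form $\begin{pmatrix} \I_{r|\mathbf{s}} & 0 \\ A & B \end{pmatrix}$, and left-multiplication by the invertible block matrix $\begin{pmatrix} \I & 0 \\ -A & \I \end{pmatrix}$ zeros out $A$; the constant rank hypothesis then forces $B$ to have constant rank $0$, whence $B = 0$ as a matrix of sections.

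The principal obstacle, which I anticipate to be the main technical step, is to pass from $B = 0$ to the conclusion that each $\widetilde{f}^j$ and $\widetilde{g}^a_\mu$ is truly independent of the ``remaining'' coordinates. In classical ($\Zs$) supergeometry this follows immediately by expansion in the (nilpotent) odd variables, but in the $\Zn$-graded setting with $n \geq 2$ the nonzero-degree coordinates need not be nilpotent. Instead, one approximates $\widetilde{f}^j$ and $\widetilde{g}^a_\mu$ by polynomial sections via \cite[Thm.~6.10]{CGPa}, for which the implication is immediate, and then exploits $\cJ$-adic Hausdorff completeness of $\cO_M$ to pass to the limit, in the spirit of the proof of Proposition \ref{tangentbasis}. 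Once this independence is established, $\phi$ factors through the $\Zn$-superdomain $W$ of dimension $r|\mathbf{s}$ carved out by the first $r|\mathbf{s}$ coordinates of $U$: the projection $\phi_1: U \to W$ onto these coordinates is a submersion at $m$, while the morphism $\phi_2: W \to V$ defined by $\phi_2^*(y^i) = w^i$ for $i \leq r$, $\phi_2^*(y^j) = \widetilde{f}^j$ for $j > r$, and analogously for the $\theta_\mu$-coordinates, is an immersion at $|\phi_1|(m)$, both by Proposition \ref{im-sub}. Hence $\phi = \phi_2 \circ \phi_1$, completing the proof.
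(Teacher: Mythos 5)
Your proof is correct and follows essentially the same route as the paper: bring the first $r|\mathbf{s}$ components of $\phi$ into normal form via the inverse function theorem, use the constant-rank hypothesis together with linear algebra over the $\cJ$-adically Hausdorff complete ring $\cO(U)$ (Proposition \ref{invertibleblockdiag}) to force the transverse partial derivatives of the remaining components to vanish, and conclude that these components descend to $W$ so that $\phi$ factors as a submersion followed by an immersion. The only cosmetic differences are that the paper kills the transverse derivatives by analyzing which columns of $JG_2$ can be nonzero rather than by your row reduction, and proves the final independence statement via the explicit power-series expansion of Lemma \ref{lem:eqf} together with ordinary calculus in the classical even transverse variables --- a point worth noting, since for those variables pointwise polynomial approximation cannot by itself propagate information away from the slice, and one needs the elementary fact that a smooth function with identically vanishing $t$-derivatives is locally constant in $t$.
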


\begin{proof}
That (2) implies (1) clearly follows from 
the property of sub- and im-mersions (Proposition \ref{im-sub}).

The converse is more involved. Since the statement is local, as before we may restrict ourselves to consider a morphism of $\Zn$-superdomains $\phi:U\to V$, with coordinates $u=(x,\xi_{\mu})$ 
and $v=(y,\eta_{\mu})$.

Let us assume that $\op{Jac}_{\phi}$ has constant rank $r|\mathbf{s}$ near the point $m$. By the above observation, the block-diagonal matrix $d\phi_m=\op{Jac}_{\phi}|_m$ is of rank $r|\mathbf{s}$. Up to reordering of the coordinates, we may thus assume that the sub-blocks of $d\phi_m$ that are invertible are 
\bea\label{diff}
 \left(\frac{\partial \phi^*(y^i)}{\partial x^j}\right)_{\substack{1\leq i,j \leq r\\ }} 
& \mbox{ and }&
\left(\frac{\partial \phi^*(\eta_{\mu}^i)}{\partial \xi_{\mu}^j}\right)_{\substack{1\leq i,j \leq s_{\mu}\\ }}
\eea
for all $\mu\in \Zn\setminus\{0\}$.

\medskip
The strategy of the proof is to construct a submersion and an immersion out of $\phi$, and then prove equality between their composite and the original morphism. 
\medskip
Let $W:=\R^{r|\mathbf{s}}$ a $\Zn$-domain with coordinates $w=(z,\zeta_{\mu})$. We define a morphism of $\Zn$-domains $\phi_1:U\to W$ by setting 
\beas
\phi_1^*(z^i)= \phi^*(y^i) & \mbox{ and } & \phi_1^*(\zeta^a_{\mu})= \phi^*(\eta^a_{\mu})
\eeas
for all $\mu\in\Zn\setminus\{0\}$. By construction and \eqref{diff}, the differential of $\phi_1$ at $m$ has rank $r|\mathbf{s}$, and so $\phi_1$ is a submersion at this point. 
In particular, by the second point of Proposition \ref{im-sub}, up to a change of the first $r|\mathbf{s}$ coordinates on $U$ near $m$, the morphism $\phi_1$ (and thus $\phi$) writes as 
\be\label{newform}
\phi_1^*(z^i)=x^i \mbox{  and  } \phi_1^*(\zeta^a_{\mu})=\xi^a_{\mu},
\ee 
for $1\leq i \leq r$, $1\leq a \leq s_{\mu}$. 

\medskip
Let us now consider the morphism of $\Zn$-domains $\psi:W\to U$ defined by 
\beas
\psi^*(x^j)=\begin{cases}
								z^j & \mbox{ for } 1\leq j \leq r\\
								0 & \mbox{ for } r+1\leq j \leq p
						\end{cases}
& \mbox{ and } & 
\psi^*(\xi_{\mu}^b)=\begin{cases}
								\zeta_{\mu}^b & \mbox{ for } 1\leq b \leq s_{\mu}\\
								0 & \mbox{ for } s_{\mu}+1\leq b \leq q_{\mu}
						\end{cases}
\eeas
Then, using \eqref{newform}, one easily sees that 

\be\label{newform2}
\begin{aligned}
&(\phi\circ\psi)^*(y^i):=\psi^*\circ \phi^*(y^i) = z^i & \mbox{ for all } 1\leq i \leq r \\ 
&(\phi\circ\psi)^*(\eta_{\mu}^a) = \zeta_{\mu}^a & \mbox{ for all } 1\leq a \leq s_{\mu} \;.
\end{aligned}
\ee

Hence, the composite $\phi_2:=\phi\circ \psi: W\to V $ is a immersion. By the first point of Proposition \ref{im-sub}, up to change of the last $p'-r|\mathbf{q'}-\mathbf{s}$ coordinates around $|\phi|(m)$ in $V$, the morphism $\phi_2$ writes in particular as 
\bea\label{newform3}
\phi_2^*(y^j)=0 &\mbox{  and  }& \phi_2^*(\eta^b_{\mu})=0,
\eea
for $r+1\leq j \leq p'$ and  $s_{\mu}+1\leq b \leq q'_{\mu}$. 

\medskip
It remains now to show equality between the original morphism $\phi$ and the composite $\phi':=\phi_1\circ \phi_2$. By construction,  $\phi$ and $\phi'$ coincide when restricted to the subdomain of $U$ defined by equations $\{x^j=0\,,\, r+1\leq j\leq p\}$, and $\{\xi^b_{\mu}=0\,,\, s_{\mu}+1\leq b\leq q_{\mu}\}$ for all $\mu\in\Zn\setminus\{0\}$ (see Remark \ref{rem:subdomain}).    

On the other hand, by the argument pertaining to \eqref{newform}, considered on a sufficiently small neighborhood of $m$, the morphism $\phi$ may be written in particular as 
\beas
\phi^*(y^i)= x^i  \mbox{ for } 1\leq i\leq r
 & \mbox{ and }&
\phi^*(\eta_{\mu}^a)= \xi_{\mu}^a  \mbox{ for } 1\leq a\leq s_{\mu}
\eeas
after a change of the first $r|\mathbf{s}$ coordinates. 
The graded Jacobian of $\phi$ near $m$ is hence a graded matrix of constant rank $r|\vect{s}$ of the form 
$$
J=
\left(
\begin{array}{cc|cc|cc|cc}
\mathbb{I}_r & 0 & 0 & 0 & \ldots & &0 & 0 \\
\star & \bullet & \star & \bullet & \ldots & &\star &\bullet \\
\hline
0 & 0 &\mathbb{I}_{s_1} & 0 &\ldots & &0 & 0 \\
\star & \bullet & \star & \bullet & \ldots & &\star &\bullet \\
\hline
\vdots & \vdots &  &  &\ddots & & \vdots& \vdots \\
\hline
0 & 0 &0 & 0 &\ldots &  &\mathbb{I}_{s_{N}} & 0\\
\star & \bullet & \star & \bullet &  & &\star &\bullet \\
\end{array}
\right)
\in \gl^0(p'|\mathbf{q'} \times p|\mathbf{q};\cO(U))
$$

Since $J$ is by hypothesis of constant rank $r|\mathbf{s}$, there exist invertible degree-zero matrices $G_1$,$G_2$ of functions over the $\Zn$-domain $U$, such that $G_1JG_2$ is of the form \eqref{cstrank}. In particular, $JG_2=G_1^{-1}(G_1JG_2)$ has exactly $r|\mathbf{s}$ columns which are non-zero. The matrix $G_2$ is hence necessarily of the form 
$$
G_2= 
\left(
\begin{array}{cc|cc|cc|cc}
A_{11} & 0 & A_{12} & 0 & \ldots & &A_{12^n} & 0 \\
C_{11} & D_{11} & C_{12} & D_{12} & \ldots & &C_{12^n} &D_{12^n} \\
\hline
A_{21} & 0 &A_{22} & 0 &\ldots & &A_{22^n} & 0 \\
C_{21} & D_{21} & C_{22} & D_{22} & \ldots & &C_{22^n} &D_{22^n} \\
\hline
\vdots & \vdots &  &  &\ddots & & \vdots& \vdots \\
\hline
A_{2^n1} & 0 &A_{2^n2} & 0 &\ldots &  &A_{2^n2^n} & 0\\
C_{2^n1} & D_{2^n1} & C_{2^n2} & D_{2^n2} &  & &C_{2^n2^n} &D_{2^n2^n} \\
\end{array}
\right)
$$
with $D_{ii}$ all invertible (by invertibility of $G_2$ and Proposition \ref{invertibleblockdiag}, see Appendix). 
And thus, moreover, 
the ``$\bullet$''-subblocks of $J$ are necessarily $0$. 
In other words, 
$$
\begin{aligned}
\frac{\partial \phi^*(y)}{\partial x^j}=0 &\quad \mbox{ for } p'-r\leq j \leq p' \\
\frac{\partial \phi^*(\eta_{\mu})}{\partial \xi_{\nu}^a}=0 &\quad \mbox{ for all $\mu,\nu$ and } q'_{\nu}-s_{\nu}\leq a \leq q'_{\nu}
\end{aligned}
$$

The above equalities clearly hold also for the morphism $\phi':=\phi_1\circ \phi_2$. The claim then follows from Lemma \ref{lem:eqf}.
\end{proof}

\begin{lem}\label{lem:eqf}
Let $U$ be a $\Zn$-domain, with $k+r|\mathbf{l}+\mathbf{s}$ coordinates $u=(x, t,\xi_{\mu},\theta_{\mu})$, and let us denote by $U'$ its $\Zn$-subdomain defined by the equations $t=0$ and $\theta_{\mu}=0$ (for all $\mu\in\Zn\setminus\{0\}$). If two graded functions $f_1,f_2$ on $U$ are such that  $f_1|_{W}=f_2|_{W}$ and 
\be\label{diff0}
\frac{\partial f_i}{\partial t}=0= \frac{\partial f_i}{\partial \theta_{\mu}}\; \qquad (i=1,2),
\ee
then they coincide on some neighbourhood of $U'$. 
\end{lem}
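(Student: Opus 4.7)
The plan is to set $g := f_1 - f_2$ and reduce the problem to showing $g \equiv 0$ on some neighborhood of $U'$: by $\R$-linearity $g$ still satisfies $g|_{U'} = 0$ and $\partial_{t^j} g = 0 = \partial_{\theta_\mu^a} g$ for all $j,a,\mu$.

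First I would expand $g$ as a formal power series in the non-zero-degree coordinates, using the Chart Theorem:
\[
g \;=\; \sum_{P,Q} g_{P,Q}(x,t)\,\xi^P\,\theta^Q\qquad \bigl(g_{P,Q}\in\Ci(|U|)\bigr),
\]
convergent in the $\cJ$-adic topology. Applying each derivation $\partial_{\theta_\mu^a}$ term-by-term, which is legitimate by the $\cJ$-adic continuity of derivations (Proposition \ref{dercont}) combined with the $\cJ$-adic Hausdorff completeness of $\cO$, the hypothesis $\partial_{\theta_\mu^a} g = 0$ together with the characteristic-zero assumption on $\mathbb{K}$ forces $g_{P,Q} = 0$ for every multi-index $Q \neq 0$. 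Hence $g$ depends only on $x,t,\xi$, i.e.\ $g = \sum_P g_{P,0}(x,t)\,\xi^P$.

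Next, the condition $\partial_{t^j} g = 0$ gives $\partial_{t^j} g_{P,0}(x,t) = 0$ for every $P$ and $j$. After shrinking $U$ to a neighborhood of $|U'|$ of the product form $|U''| \times I$, with $I\subset\R^r$ a connected open neighborhood of $0$, classical calculus yields $g_{P,0}(x,t) = h_P(x)$ for some $h_P\in\Ci(|U''|)$. Thus on this neighborhood $g = \sum_P h_P(x)\,\xi^P$.

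Finally, the restriction morphism to $U'$ sets $t = 0$ and $\theta_\mu = 0$ but leaves $x$ and $\xi$ unchanged, so $g|_{U'} = \sum_P h_P(x)\,\xi^P$ in $\cO(U')$. The hypothesis $g|_{U'} = 0$ therefore forces each $h_P \equiv 0$, whence $g = 0$ on the chosen neighborhood of $U'$. The main obstacle is the rigorous justification of term-by-term differentiation of the formal expansion, which is precisely where $\cJ$-adic continuity of derivations and Hausdorff completeness of $\cO$ are indispensable; once this is in place, the rest reduces to elementary smooth calculus on the base.
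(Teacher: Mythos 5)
Your proposal is correct and follows essentially the same route as the paper's proof: form the difference $f_1-f_2$, expand it $\cJ$-adically as a formal power series in the nonzero-degree coordinates, use $\cJ$-adic continuity of the partial derivatives to kill all coefficients involving $\theta$, and then combine the vanishing of the $t$-derivatives with the restriction to $U'$ to conclude each remaining coefficient vanishes near $U'$. Your write-up merely spells out a few steps (the product-form neighbourhood and the classical calculus on the base) that the paper leaves implicit.
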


\begin{rema}\label{rem:subdomain}
Let us be more precise regarding the $\Zn$-subdomain $U'$ of $U$. Its base space is the subdomain of $|U|$ defined by equations $v=0$, whereas its structure sheaf consists of functions of the form formal power series in the $\xi_{\mu}$-variables, with coefficients which are smooth functions in the $x$-variables ($\Ci(x)[[\xi]]$). There is a natural embedding $\rho: U'\to U$ given by 
$$
 \rho^*(x)=x, \quad \rho^*(t)=0, \quad \rho^*(\xi)=\xi, \quad \rho^*(\theta)=0 \;.
$$
Then, for any $f\in \Ci(U)$ we denote $f|_W:=\rho^*(f)$. 
\end{rema}

\begin{proof}
Set $f=f_1-f_2\in \Ci(U)$, which has the coordinate expression 
$$  
f=\sum_{\alpha, \beta}^{\infty} f_{\alpha,\beta}(x,t)\xi^{\alpha}\theta^{\beta}\;,
$$
where the coefficients $f_{\alpha,\beta}$ are classical smooth functions on the variables $(x,t)$. 
From \eqref{diff0} (thanks to the $\cJ$-adic continuity of partial derivatives), it follows that $f_{\alpha,\beta}=0$ whenever $\beta\neq 0$. Moreover, also from the hypothesis, for all $\alpha$ we have 
\beas
f_{\alpha,0}|_{W}=0 &\mbox{ and }& \frac{\partial f_{\alpha,0}}{\partial t}=0\;,
\eeas
which implies that $f_{\alpha,0}=0$ in a neighbourhood of $U'$. The claim follows.  
\end{proof}

\bigskip

\section{Cotangent sheaf and differential forms}

\begin{defn}
The {\it cotangent sheaf} of a $\Zn$-supermanifold $M$ is the sheaf of topological $\cO$-modules 
$$
\Om^1_M = \cT^*\!M:= \iHom_{\cO}(\cT M, \cO).
$$ 
\end{defn}

Since $\cT M$ is a locally free sheaf of rank $p|{\bf q}$, one sees from its definition that $\Om^1_M$ is also a locally free sheaf of rank $p|{\bf q}$. In terms of a coordinate system $u=(x^i, \xi_{\mu}^a)$ on a chart $U$, a local $\cO(U)$-basis of $\Om^1_M(U)$ 
is given by the linear functionals $du=(dx^i, d\xi_{\mu}^a)$
dual to the coordinate vector fields $\partial_{u}=(\partial_{x^i}, \partial_{\xi^j_{\mu}})$.

Following Deligne-Morgan's convention in \cite{DM}, we write the evaluation pairing $( \; , \, ): \cT M \, \otimes \, \cT ^*\!M \to \cO$ by setting 
$$
( fX, g\omega ) = (-1)^{\langle \deg(X), \deg(g) \rangle} fg \,( X, \omega )
$$
for $f,g \in \cO(U)$, $X \in \cT M(U)$, $\omega \in \cT^*\!M(U)$.

\medskip

\subsection{Differential forms}

Since the category of graded $A$-modules for a $\Zn$-commutative ring $A$ is a symmetric monoidal category, the exterior powers and the exterior algebra of an $A$-module, as well as arbitrary direct sums of $A$-modules, are well-defined (see the Appendix as well as \cite{CM}, Section 3.1.2 for more details). Hence we may define differential forms for $\Zn$-supermanifolds.

\begin{defn}
Let $M$ be a $\Zn$-supermanifold. The sheaf of {\it differential $k$-forms} $\Om^k_M$ is the \emph{extension of the $\cB$-sheaf} (see \cite[Appendix 7.3.1]{CGPa}) of $\cO$-modules $U \mapsto \Lambda^k_{\cO(U)}(\Om^1(U))$. Analogously, the sheaf of {\it differential forms} on $M$ is the extension of the $\cB$-sheaf $\Om^\bullet_M : U \mapsto \Lambda^\bullet_{\cO(U)}(\Om^1(U)) = \bigoplus_k \Om^k_M(U)$. 
\end{defn}

\begin{prop}
For any $k$, $\Om^k_M$ is $\cJ$-adically Hausdorff complete, and $\Om^k_{M, m}$ is $\cJ_m$-adically Hausdorff complete for any $m \in M$. The same is true of $\Om^\bullet_M$ and $\Om^\bullet_{M, m}$.
\end{prop}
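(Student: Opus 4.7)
The plan is to mimic the proof of the Corollary following Proposition \ref{tangentbasis}, which established Hausdorff completeness of the tangent sheaf: reduce the claim to showing that each $\Omega^k_M$ is locally free of finite rank over $\cO_M$, and then invoke Proposition \ref{freecomplete} from the Appendix (free modules over a Hausdorff complete ring are Hausdorff complete).

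The first step is to observe that $\Omega^1_M$ is locally free of rank $p|\mathbf{q}$: on a coordinate chart $U$ with coordinates $u = (x^i, \xi^a_\mu)$, Proposition \ref{tangentbasis} provides the free $\cO(U)$-basis $(\partial_{u^i})$ of $\cT M(U)$, and dualizing via $\Omega^1_M = \iHom_{\cO_M}(\cT M, \cO_M)$ yields the dual free basis $(du^i)$ of $\Omega^1_M(U)$. I would then show that for each fixed $k \geq 0$, $\Omega^k_M(U) = \Lambda^k_{\cO(U)}(\Omega^1_M(U))$ remains a free $\cO(U)$-module of finite rank. Using the braiding $c^{gr}$ of the symmetric monoidal category of $\Zn$-graded modules, one has $du^i \wedge du^j = -(-1)^{\langle \deg u^i,\deg u^j\rangle} du^j\wedge du^i$; in characteristic zero this forces $du^i \wedge du^i = 0$ precisely when $\langle \deg u^i,\deg u^i\rangle = 1$, while the remaining basis forms are allowed to occur with arbitrary repetition. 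Since the $du^i$ are finite in number, for fixed $k$ only finitely many distinct wedge monomials arise, and these form a free $\cO(U)$-basis of $\Omega^k_M(U)$. Combined with the Hausdorff completeness of $\cO_M$ (Proposition 6.9 of \cite{CGPa}), Proposition \ref{freecomplete} yields Hausdorff completeness of $\Omega^k_M$.

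The stalk statement for $\Omega^k_{M,m}$ is entirely parallel: the germs of the above basis provide a free basis of $\Omega^k_{M,m}$ of the same finite rank over $\cO_m$, and $\cO_m$ is itself $\cJ_m$-adically Hausdorff complete, so Proposition \ref{freecomplete} applies again. For $\Omega^\bullet_M = \bigoplus_k \Omega^k_M$, the $\cJ$-adic filtration respects the form-degree decomposition, $\cJ^n\Omega^\bullet = \bigoplus_k \cJ^n\Omega^k$, so that the statement reduces componentwise to the case of each $\Omega^k_M$ already handled.

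The step I expect to require the most care is the enumeration of a free basis for $\Omega^k_M(U)$ in the second paragraph: unlike in classical $\Zs$-supergeometry, the differentials $d\xi^a_\mu$ with $\langle \mu,\mu\rangle = 0$ do not square to zero in the exterior algebra and may appear with arbitrary multiplicity, so one must carefully verify that the monomial generators remain $\cO(U)$-linearly independent and that only finitely many distinct monomials occur in each form degree. This bookkeeping is a straightforward application of the $\Zn$-graded linear algebra developed in \cite{CM}, but it is where the substantive content of the proof lies.
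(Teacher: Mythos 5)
Your proposal is correct and follows essentially the same route as the paper: $\Om^k_M$ is locally free of finite rank (via Corollary \ref{freeexterior} applied to the free module $\Om^1_M(U)$ with basis $(du^i)$), so Proposition \ref{freecomplete} and the Hausdorff completeness of $\cO_M$ give the claim, with the stalk and $\Om^\bullet_M$ statements handled componentwise. One small correction to your bookkeeping: by the Deligne sign rule $du^i \wedge du^i = (-1)^{1+\langle \deg u^i,\deg u^i\rangle}\,du^i\wedge du^i$, so it is the differentials of the \emph{odd} coordinates ($\langle\mu,\mu\rangle=1$) that fail to square to zero and may repeat, exactly as in $\Zs$-supergeometry, while those with $\langle\mu,\mu\rangle=0$ do square to zero --- you have the two cases interchanged, though this does not affect finiteness of the rank or the conclusion.
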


\begin{proof}
This follows from the $\cJ$-adic Hausdorff completeness of $\cO$ and the fact that $\Om^k_M$ is a locally free sheaf; a similar argument holds for $\Om^k_{M,m}$.
\end{proof}

In addition to their $\Zn$-grading, differential forms have a natural $\N$-grading from their definition as a sheaf of exterior algebras (namely, elements in $\Om^k(M)(U)$ have $\N$-degree $k$).

The wedge product on $\Om^\bullet_M$ is just the product on the exterior algebra, so is compatible with both the $\N$-grading and the $\Zn$-grading. With this grading, the wedge product turns $\Om^\bullet(M)$ into a commutative $\N$-graded $\Zn$-superalgebra:
\[
\alpha \wedge \beta = (-1)^{\langle \deg(\alpha), \deg(\beta) \rangle + |\alpha||\beta|} \, \beta \wedge \alpha\;.
\]
Here $| \cdot |$ denotes the $\N$-degree of a differential form. Note that we follow the \emph{Deligne sign rule} for the commutation of differential forms, explained in \cite{DM}.

\begin{rema} As in the case of ordinary supergeometry, in general there are no ``top-degree forms" on a $\Zn$-supermanifold since $\Om^k(M^{p|{\bf q}})$ is nonzero for all $k$ if ${\bf q}$ is not purely even.
\end{rema}

\medskip
The wedge product of differential forms is compatible with the $\cJ$-adic topology:
\begin{prop}
The wedge product of differential forms $\Om^k_M \times \Om^l_M \to \Om^{k+l}_M$ is $\cJ$-adically continuous for any $k, l$. Consequently, the wedge product $\Om^\bullet_M \times \Om^\bullet_M \to \Om^\bullet_M$ is $\cJ$-adically continuous.
\end{prop}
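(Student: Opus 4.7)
My plan is to reduce the continuity of $\wedge$ to the compatibility of the $\cJ$-adic filtration with the wedge product. Working locally on a coordinate chart $U$, recall that $\Om^k_M(U)$ is a free $\cO(U)$-module with basis $\{du^{i_1}\wedge\cdots\wedge du^{i_k}\}$, and by construction of the $\cJ$-adic topology on a free $\cO$-module, $\cJ^a \Om^k_M(U)$ consists exactly of forms all of whose coefficients (in this basis) lie in $\cJ^a(U)$. Since the wedge product is $\cO$-bilinear and the coefficient ring satisfies $\cJ^a\cdot\cJ^b\subseteq\cJ^{a+b}$, I would first establish the key containment
\[
\cJ^a\Om^k_M(U)\wedge\cJ^b\Om^l_M(U)\subseteq \cJ^{a+b}\Om^{k+l}_M(U)
\]
by expanding in the coordinate basis and using the inclusion on coefficients.

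Next I would deduce continuity at an arbitrary point $(\alpha,\beta)\in\Om^k_M(U)\times\Om^l_M(U)$ endowed with the product of the $\cJ$-adic topologies. Given $m\in\N$, a basic neighborhood of $\alpha\wedge\beta$ is $\alpha\wedge\beta + \cJ^m\Om^{k+l}_M(U)$. For any $\alpha'\in \alpha+\cJ^m\Om^k_M(U)$ and $\beta'\in\beta+\cJ^m\Om^l_M(U)$, I would use the bilinear identity
\[
\alpha'\wedge\beta' - \alpha\wedge\beta = (\alpha'-\alpha)\wedge\beta + \alpha\wedge(\beta'-\beta) + (\alpha'-\alpha)\wedge(\beta'-\beta),
\]
and invoke the containment above (with $(a,b)=(m,0),(0,m),(m,m)$) to see that the right-hand side lies in $\cJ^m\Om^{k+l}_M(U)$. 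Thus $(\alpha+\cJ^m\Om^k_M(U))\times(\beta+\cJ^m\Om^l_M(U))$ maps into $\alpha\wedge\beta + \cJ^m\Om^{k+l}_M(U)$, proving continuity at $(\alpha,\beta)$. Since continuity is a local property and the restriction maps of $\Om^\bullet$ are continuous, this yields $\cJ$-adic continuity of the wedge product on the sheaf.

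For the second assertion, I would note that $\Om^\bullet_M = \bigoplus_k \Om^k_M$ carries the direct-sum $\cJ$-adic topology (each $\cJ^r\Om^\bullet_M$ is the direct sum of the $\cJ^r\Om^k_M$), so continuity of $\wedge:\Om^\bullet_M\times\Om^\bullet_M\to\Om^\bullet_M$ reduces, component by component, to the continuity statements already established for the homogeneous pieces.

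I do not expect a serious obstacle: the whole argument is formal once one has identified the $\cJ$-adic filtration on $\Om^k_M(U)$ with the coefficient-wise filtration, which is immediate from local freeness (this is essentially Proposition \ref{freecomplete} in the Appendix). The only mild point of care is to verify that the $\cJ$-adic topology on the locally free sheaf $\Om^k_M$ is indeed the one induced coordinate-wise; this is routine and parallels the analogous fact already used for $\cT M$.
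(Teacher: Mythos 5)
Your proposal is correct and follows essentially the same route as the paper: establish the containment $\nu(\cJ^a\Om^k(U)\times\cJ^b\Om^l(U))\subseteq\cJ^{a+b}\Om^{k+l}(U)$ by a coordinate computation, then use bilinearity to conclude that the preimage of each basic open set $\omega+\cJ^m\Om^{k+l}(U)$ is a union of basic open products, hence open. The paper leaves the bilinear expansion implicit where you write it out, but the argument is the same.
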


\begin{proof}
It suffices to verify the proposition locally in a coordinate domain $U$ with coordinates $(x^i, \xi^j)$. For the remainder of the proof, we denote the wedge product by $\nu: \Om^k_M(U) \times \Om^l_M(U) \to \Om^{k+l}_M(U)$. A direct calculation in coordinates shows that $\nu((\cJ^m \Om^k)(U) \times (\cJ^n \Om^l)(U)) \subseteq (\cJ^{m+n} \Om^{k+l})(U)$ for any $m$ and $n$. This implies in particular that we have

\begin{align*}
\nu^{-1}(\omega + (\cJ^m \Om^{k+l})(U)) = \bigcup_{(\sigma, \tau) \in \nu^{-1}(\omega + (\cJ^m \Om^{k+l})(U))} \left(\sigma + (\cJ^m \Om^k)(U) \right) \times \left(\tau+ (\cJ^m \Om^l)(U) \right).
\end{align*}

\noindent for any $\omega \in \Omega^{k+l}(U)$. The sets $\left(\sigma + (\cJ^m \Om^k)(U) \right) \times \left(\tau+ (\cJ^m \Om^l)(U) \right)$ are open in the product topology for $\Om^k_M(U) \times \Om^l_M(U)$, whence the LHS of the above equality is also open, proving continuity of $\nu$.

\end{proof}

A similar $\cJ$-adic continuity statement holds for the wedge product on germs of differential forms at a given point $m \in M$, and may be proven in a similar fashion.\\

Finally, we note that given a morphism of $\Zn$-supermanifolds $\Phi: M \to N$, there is a pullback morphism $\Phi^*: \Omega^1(N) \to \Omega^1(M)$, which is given in local coordinates as follows: suppose $\omega \in \Omega^1(N)$. By use of a partition of unity, we may suppose that $\omega$ is supported in a coordinate domain. Then if $(x, \xi)$ and $(y, \eta)$ are coordinates on $U \subseteq M$, $V \subseteq N$ respectively, $\Phi$ has a coordinate representation $\phi^*(y, \eta) = (t_1(x, \xi), \dotsc, t_r(x, \xi), \theta_1(x, \xi), \dotsc, \theta_s(x, \xi))$, and $\omega = \sum_i g_i \, dy^i + \sum_j h_j \, d\eta^j$ for some functions $g_i(y, \eta), h_j(y, \eta)$. Then the pullback is defined by

$$
\Phi^*(\omega)  = \sum_i \phi^*(g_i) \, dt^i + \sum_j \phi^*(h_j) \, d\theta^j.
$$

Here $dt^i, d\theta^j$ are the differentials of $t^i, \theta^j$ respectively (see Defn. \ref{defdifferential}). As in the ungraded case, the Chain Rule implies that this definition is independent of the choice of coordinate system, hence the pullback is globally defined. The pullback map $\Phi^*: \Om^1(N) \to \Om^1(M)$ extends uniquely to a pullback map $\Phi^*: \Om^\bullet(N) \to \Om^\bullet(M)$ which preserves the $\N$-grading and the wedge product. Indeed, by again using partitions of unity, we can reduce the problem to the setting of a coordinate domain and then extend via the universal property of the exterior algebra. 

Locally, the pullback may be described on $k$-forms as follows: given coordinates $(x, \xi)$ on $U$ and $(y, \eta)$ on $V$, the morphism $\Phi$ has a coordinate description $\phi^*(y, \eta) = (t_1(x, \xi), \dotsc, t_r(x, \xi), \theta_1(x, \xi), \dotsc, \theta_s(x, \xi))$. Then the pullback $\Phi^*$ is given on a form $dy^{I} \wedge d\eta^{J} \cdot f$ by:
\be\label{localformpullback}
\Phi^*(dy^{I} \wedge d\eta^{J} \cdot f) = dt^I \wedge d\theta^{J} \cdot \phi^*(f)\;,
\ee
where multi-index notation is used, e.g., $dy^I:=dy^{i_1}\wedge \cdots \wedge dy^{i_p}$. 

Note that since $\Phi^*$ is a morphism of $\cO_N$-modules, it is in particular $\cJ$-adically continuous.

\bigskip
\subsection{Exterior derivative.}

Let $U$ be an open subsupermanifold of $M$.

\begin{defn}\label{defdifferential}
The {\it differential} of a function $f \in \cO(U)$ is the section $df$ of $\cT ^*M(U)$ defined by
$$
( X, df ) = Xf\;.
$$
\end{defn}

From the derivation property of $X$, we immediately obtain the Leibniz rule
$$
d(fg) = df \cdot g + f \cdot dg\;.
$$

It is immediately seen from its definition that the differential is compatible with the restriction maps of the sheaf $\cO_M$, since $\cT M$ and $\cO_M$ are both sheaves. Furthermore, the assignment $f \mapsto df$ is readily seen to be $\cJ$-adically continuous by $\cJ$-adic continuity of vector fields. Thus, the operator $d:\cO_M \to \cT ^*M$ is $\cJ$-adically continuous.

If $U$ is a coordinate chart with local coordinates $(x, \xi)$, the differential of $f$ is given in $U$ by the formula:
\begin{align}\label{formuladiff}
df = \sum_i dx^i \cdot \frac {\partial f} {\partial x^i} + \sum_j d\xi^j \cdot \frac {\partial f} {\partial \xi^j}\;.
\end{align}

As a consequence of the chain rule, formula \eqref{formuladiff} transforms correctly under a change of coordinates and is thus independent of the choice of coordinate system. Hence, it may be used as an alternative definition of the differential.

\begin{prop}
The differential $d: \mathcal{O} \to \mathcal{T}^*M$ is the universal derivation of $\Zn$-degree $0$ with values in an $\mathcal{O}$-module. 
\end{prop}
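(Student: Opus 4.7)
The plan is to establish the universal property by unraveling it into its usual form: given any $\cO$-module $\cF$ and any degree-$0$ derivation $D:\cO \to \cF$, I must produce a unique $\cO$-linear map of degree $0$, $\phi:\cT^{*}\!M \to \cF$, such that $D = \phi \circ d$. I would work locally first and then glue, exploiting the fact that $\cT^{*}\!M$ is a locally free sheaf with the explicit basis $(du^a)$ dual to the coordinate vector fields.

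First, on a coordinate chart $U$ with coordinates $u=(x^i,\xi^a_\mu)$, $\cT^{*}\!M(U)$ is a free $\cO(U)$-module with basis $(du^a)$, so there is a unique $\cO(U)$-linear map of degree $0$, $\phi_U$, sending $du^a \mapsto Du^a$. The task is then to verify $\phi_U(df)=Df$ for every $f\in\cO(U)$. On polynomial sections in the $u^a$, this is a direct Leibniz-rule computation combined with the local coordinate formula \eqref{formuladiff} for $d$ and the $\cO(U)$-linearity of $\phi_U$.

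Second, to pass from polynomial sections to arbitrary sections, I would invoke $\cJ$-adic continuity: $d$ is continuous by construction, $\phi_U$ is continuous by Lemma \ref{Iadiccont} (being $\cO(U)$-linear), and $D$ is continuous by the same induction $D(\cJ^k) \subseteq \cJ^{k-1}\cF$ as in Proposition \ref{dercont}. Given $f \in \cO(U)$ and any point $m\in U$, use polynomial approximation \cite[Thm. 6.10]{CGPa} to find polynomial sections $Q_k$ with $[f-Q_k]_m \in \mathfrak{m}^k_m$; the continuity of $D-\phi_U\circ d$ together with its vanishing on polynomial sections then forces $[Df - \phi_U(df)]_m$ to lie in $\mathfrak{m}^k_m \cdot \cF_m$ for every $k$. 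This is exactly the Hausdorff completeness step that recurs throughout the paper, and I expect it to be the main obstacle, since concluding $Df = \phi_U(df)$ cleanly presupposes that $\cF$ is $\cJ$-adically Hausdorff (or more generally, that $\bigcap_k \cJ^k \cF = 0$); this is the natural standing hypothesis for the category of $\cO$-modules in this setting, and I would state it explicitly.

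Finally, globalization and uniqueness are formal. On an overlap of two coordinate charts, the maps $\phi_U$ and $\phi_V$ agree because both are forced by the identity $\phi\circ d = D$ on the local basis elements (which are differentials of coordinate functions, hence in the image of $d$). The local pieces therefore glue, via the sheaf axioms for $\iHom_{\cO}(\cT M,\cF)$, into a single $\cO$-linear morphism $\phi:\cT^{*}\!M \to \cF$ of degree $0$. Uniqueness of $\phi$ on the nose follows because any candidate must send $du^a$ to $Du^a$, and the $du^a$ locally generate $\cT^{*}\!M$ as an $\cO$-module; $\cO$-linearity then propagates the constraint to all sections.
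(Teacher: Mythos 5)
Your proof is correct in substance but follows a genuinely different route from the paper's. You verify the universal property directly: for each $\cO$-module $\cF$ and degree-$0$ derivation $D:\cO\to\cF$ you build the factorizing map chart by chart from $du^a\mapsto Du^a$, check it on polynomial sections via the Leibniz rule and formula \eqref{formuladiff}, extend by $\cJ$-adic continuity, and glue. The paper instead proves representability abstractly, by composing natural isomorphisms $\underline{\mathcal{H}om}_\cO(\cT^*M,\cF)\cong\cF\otimes_\cO(\cT^*M)^*\cong\cF\otimes_\cO\cT M\cong \mathrm{Der}_{\R}(\cO,\cF)$ using double-duality and the tensor-hom identification for locally free sheaves of finite rank, and then invokes functoriality in $\cF$; naturality and uniqueness of the factorization come essentially for free, whereas you must check gluing and uniqueness by hand (which you do, correctly, via the fact that the $du^a$ lie in the image of $d$ and generate $\cT^*M$ locally). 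The analytic core is identical in both arguments: an $\cF$-valued derivation is determined by its values on the coordinates, proved by polynomial approximation plus $\cJ$-adic continuity --- in the paper this is the claim $\phi_U\circ\psi_U=\mathrm{id}$, dispatched as ``a slight variant'' of the chain-rule argument. Your explicit remark that the final step, passing from $[Df-\phi_U(df)]_m\in\bigcap_k\mathfrak{m}_m^k\cF_m$ to $Df=\phi_U(df)$, requires $\cF$ to be $\cJ$-adically separated is well taken: the paper's proof needs exactly the same hypothesis at exactly the same point and does not state it, so flagging it (or restricting to Hausdorff-complete $\cO$-modules, as the paper does elsewhere for its own sheaves) is an improvement rather than a defect.
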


\begin{proof}
This is proven by composing several natural sheaf morphisms. First, for sheaves $\cG$ and $\cG'$, we have a natural morphism $\cG' \otimes_\cO \cG^* \to \underline{\mathcal{H}om}_\cO(\cG, \cG')$, which is given as follows: a basic element $s \otimes \varphi \in \cG'(U) \otimes_{\cO(U)} \cG^*(U)$ defines a homomorphism $\cG|_U \to \cG'|_U$ by $m \mapsto s \cdot \varphi(m)$. If $\cG$ is locally free of finite rank, this morphism is an isomorphism. Hence for a fixed locally free sheaf $\cG$ of finite rank, we have an isomorphism of sheaves $\cG' \otimes_\cO \cG^* \cong \underline{\mathcal{H}om}_\cO(\cG, \cG')$ which is functorial in $\cG'$.

We have another natural sheaf morphism $\cG \to (\cG^*)^*$, induced as follows: for $m \in \cG(U)$, we send $m \mapsto \varphi_m$, where $\varphi_m(f)= (-1)^{\langle deg(m), deg(f) \rangle} f(m)$ and if $\cG$ is locally free of finite rank, this is an isomorphism.

Finally, we specialize to the case $R = \cO(U)$, where $U$ is an open subset. There is a natural $\cO(U)$-module morphism $\phi_U:\cF(U)  \otimes_\cO \cT M(U)  \to Der_\R(\cO, \cF(U))$, given by $m \otimes X \mapsto [m] X$, where $([m]X)(f) := m \cdot Xf$ for $f \in \cO(U)$.

Now assume further that $U$ is a coordinate domain with coordinates $(u^i)$. In this case, we define a morphism $\psi_U: Der_\R(\cO, \cF(U)) \to  \cF(U) \otimes_\cO \cT M(U) $ by $D \mapsto \sum_i Du^i \otimes \partial_{u^i}$. We claim $\psi_U= \phi^{-1}_U$. The only significant point here is to show $\phi_U \circ \psi_U$ is the identity. This follows from a slight variant of the polynomial approximation arguments used in the proof of Prop. \ref{chainrule}.

The morphisms $\phi_U$ induce a natural sheaf morphism $\widetilde{\phi}:  \cF \otimes_\cO \cT M \to Der_\R(\cO, \cF)$. Since $\phi_U$ is an isomorphism if $U$ is a coordinate domain, $\widetilde{\phi}$ is a sheaf isomorphism which is natural in $\cF$.

Now let $\mathcal{F}$ be an $\mathcal{O}$-module. Noting that $\cT^*M$ is locally free of finite rank and putting together all the sheaf isomorphisms explained above, we have a sequence of isomorphisms

\begin{align*}
\underline{\mathcal{H}om}_\cO(\cT^*M, \mathcal{F}) & \cong  \mathcal{F} \otimes_\cO  (\cT^*M)^*  \\
&\cong  \mathcal{F} \otimes_\cO \cT M\\
&\cong Der_\R(\cO, \mathcal{F}),
\end{align*}

\noindent which are all functorial in $\cF$, so $\cT^*M$ satisfies the universal property characterizing the universal derivation up to unique isomorphism.
\end{proof}

As on ungraded manifolds, the differential on $\cO_M$ extends uniquely to a differential operator on the sheaf of differential forms $\Om^{\bullet}_M$ satisfying certain properties. First, we prove this globally.

\begin{prop}\label{uniqueextder}
For each $k \geq 0$, there exists a unique morphism of $\Zn$-super vector spaces $d: \Om^k(M) \to \Om^{k+1}(M)$ satisfying the following conditions:
\begin{enumerate}
\item For $k = 0$, $df$ is the differential of $f$.\
\item 
 (Graded derivation) For any $k$-form $\alpha$ and any form $\beta$,
\[
d(\alpha \wedge \beta) = d\alpha \wedge \beta + (-1)^k \alpha \wedge d \beta.
\]
\item $d^2 = 0\,$.\\
\end{enumerate}
We call $d$ the {\bf exterior derivative}.
\end{prop}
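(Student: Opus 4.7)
The plan is to establish the statement locally in coordinate charts, where properties (1)--(3) force an explicit formula, then verify that this formula indeed defines an operator satisfying all three properties, and finally globalize by appealing to local uniqueness on overlaps.

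\emph{Uniqueness.} Let $U$ be a coordinate chart with coordinates $u=(x^i, \xi_\mu^j)$. Property (3) applied to each coordinate function yields $d(du^a)=d^2 u^a = 0$. Any section of $\Om^k(U)$ can be written (using the $\cO(U)$-basis $\{du^{i_1}\wedge \cdots \wedge du^{i_k}\}$) as a finite sum $\omega = \sum_I f_I\, du^I$ with $f_I \in \cO(U)$. Iteration of the graded derivation rule (2), combined with $d(du^a)=0$ and the uniqueness of the differential on functions (which is forced by property (1) together with formula \eqref{formuladiff}), gives
\[
d\omega = \sum_I df_I \wedge du^I.
\]
Thus $d|_U$ is uniquely determined on every coordinate chart, hence on all of $M$.

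\emph{Existence.} On each coordinate chart $U$, \emph{define} $d_U$ by the formula above. Property (1) is immediate. For (2), by $\R$-linearity it is enough to check the graded Leibniz rule on monomials $\alpha = f\,du^I$ and $\beta = g\,du^J$: this reduces to the Leibniz rule for the differential on $\cO(U)$ combined with sign-tracking when commuting $dg$ (of $\N$-degree $1$) past the $k$-form $du^I$ according to the Deligne convention. For (3), the crucial computation is $d^2 f = 0$ for $f \in \cO(U)$. Applying $d_U$ to $df = \sum_a du^a \cdot \partial_{u^a} f$ and using $d(du^a)=0$ yields, up to sign, $\sum_{a,b} du^a \wedge du^b \cdot \partial_{u^b}\partial_{u^a} f$; the antisymmetry
\[
du^a \wedge du^b = -(-1)^{\langle \deg(u^a), \deg(u^b)\rangle}\, du^b \wedge du^a
\]
combined with the $\Zn$-graded commutation of mixed partials $\partial_{u^a}\partial_{u^b} = (-1)^{\langle \deg(u^a), \deg(u^b)\rangle}\partial_{u^b}\partial_{u^a}$ gives pairwise cancellation. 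Finally $d_U^2 \omega = 0$ on a general $k$-form follows from $d^2 f = 0$, $d(du^a) = 0$, and repeated use of (2).

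\emph{Globalization.} By the local uniqueness argument, for any two coordinate charts $U, V$ the operators $d_U$ and $d_V$ agree on $U \cap V$. The $d_U$ therefore glue into a sheaf morphism $d: \Om^k_M \to \Om^{k+1}_M$ whose global sections provide the required operator $\Om^k(M) \to \Om^{k+1}(M)$, and the properties (1)--(3) transfer from the local charts to the globally defined $d$.

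The principal difficulty is the sign bookkeeping in verifying (2) and (3): under the Deligne convention each commutation produces signs from both the $\N$-grading of forms and the $\Zn$-grading of the coordinate ring. The computation of $d^2 f = 0$ is the linchpin, as it is precisely there that the graded antisymmetry of wedges of $1$-forms and the graded commutativity of partial derivatives on a $\Zn$-commutative algebra must cancel; once this is verified, the rest of the argument is formal. Note that, unlike in the ungraded or $\Zs$-graded setting, there is no nilpotency simplification available, but the local coordinate expressions for forms involve only \emph{finite} sums of wedges with coefficients in $\cO(U)$, so no $\cJ$-adic continuity is needed in the core algebraic verification.
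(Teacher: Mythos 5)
Your overall strategy coincides with the paper's: derive the forced coordinate formula from (1)--(3), define $d$ locally by that formula, verify the three properties (with $d^2f=0$ as the linchpin), and glue by local uniqueness. The sign bookkeeping you describe for the Leibniz rule and for $d^2f=0$ is the right mechanism, and your observation that the graded commutation $\partial_{u^a}\partial_{u^b}=(-1)^{\langle \deg u^a,\deg u^b\rangle}\partial_{u^b}\partial_{u^a}$ also forces $\partial_{u^a}^2=0$ for odd $u^a$ correctly covers the diagonal terms.

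There is, however, one genuine gap in your uniqueness argument. The proposition concerns an operator on \emph{global} sections, $d:\Om^k(M)\to\Om^{k+1}(M)$, so a hypothetical $d$ satisfying (1)--(3) is a priori not defined on $\Om^k(U)$ for a chart $U$; in particular the step ``$d(du^a)=d^2u^a=0$'' is not even well-formed, since the coordinate function $u^a\in\cO(U)$ is not a global section and $d$ cannot be applied to it. Before you may write $d|_U$ and compute in coordinates, you must prove that any $d$ satisfying (1)--(3) is a \emph{local} operator, i.e.\ that $\omega|_U=0$ implies $(d\omega)|_U=0$. The paper does exactly this as the first step of its uniqueness proof, using a bump function $\varphi$ from Lemma \ref{localization} with $\varphi\equiv 1$ near a point and $\supp\varphi\subset U$, applying the derivation property to $\varphi\omega\equiv 0$; it then uses the same lemma to extend local coordinates and coefficient functions to global sections agreeing with them near a point, so that the global $d$ can legitimately be evaluated on them. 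Without this locality step your derivation of the formula $d\omega=\sum_I df_I\wedge du^I$ does not follow from the hypotheses. The fix is standard and the rest of your argument (existence on charts, verification of (1)--(3), and gluing via uniqueness on overlaps) then goes through as in the paper.
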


\begin{proof}
We first show that there is at most one operator $d$ that satisfies the conditions of the proposition. Suppose such a $d$ exists. We first show $d$ is local, i.e. if $\omega$ is a $k$-form on $M$ that vanishes on an open subset $U$, $(d \omega)|_U = 0$. Let $u \in U$ be any point. By Proposition \ref{localization}, there exists a function $\varphi \in \cO^{0}(M)$ such that $supp(\varphi) \subset U$ and $\varphi \equiv 1$ in some neighborhood $V \subset U$ of $u$. Then $\varphi \omega \equiv 0$ on $M$, and we have $d(\varphi \omega) = d \varphi \wedge \omega + \varphi \cdot d\omega \equiv 0$. It follows from the properties of $\varphi$ that $(d \omega)|_V \equiv 0$. We conclude that if $\omega$ and $\omega'$ are $k$-forms on $M$ such that $\omega|_U = \omega'_U$, $d\omega|_U = d\omega'|_U$.

Let $x^i, \xi^j$ be coordinates on an open subsupermanifold $U$ of $M$, and let $\omega := \sum_{I,J} dx^I \wedge d\xi^J \cdot  \omega_{IJ}$ be a $|I| + |J|$-form on $U$. Let $u$ be any point in $U$. By Proposition \ref{localization}, there exist functions $\widetilde{x}^i$ (resp. $\widetilde{\xi}^j$) on $M$ which agree with $x^i$ (resp. $\xi^j$) in a neighborhood of $u$. In the same fashion, we may extend $\omega$ to a form $\widetilde{\omega} = \sum_{I,J} d\widetilde{x}^I \wedge d\widetilde{\xi}^J \cdot  \widetilde{\omega}_{IJ}$ on $M$, which agrees with $\omega$ near $u$.  By the graded derivation property (2) of $d$, we see that

\[
d \widetilde{\omega} = \sum_{I, J} d(d\widetilde{x}^I \wedge d \widetilde{\xi}^J) \widetilde{\omega}_{IJ} + (-1)^{|I| + |J|} \, d\widetilde{x}^I \wedge d \widetilde{\xi}^J \wedge d\widetilde{\omega}_{IJ}.
\]\

Note that $d(d\widetilde{x}^I \wedge d\widetilde{\xi}^J) = 0$ for any multiindices $I, J$; this follows from the fact that $d^2 x^i = d^2 \xi^j = 0$ by property (3), using induction on $|I| + |J|$ and the graded derivation property (2). Hence

\begin{equation} \label{defnextder}
d \widetilde{\omega} = \sum_{I, J} (-1)^{|I| + |J|} \, d\widetilde{x}^I \wedge d \widetilde{\xi}^J \wedge d\widetilde{\omega}_{IJ}.
\end{equation}\\

As $u$ was arbitrary, the asserted uniqueness follows, as $d$ is then given in $U$ by the right-hand side of Equation \eqref{defnextder}.

To prove existence, we first show that an operator $d$ satisfying properties (1)--(3) exists in the case where $M$ is a coordinate domain $U$ with coordinates $(x, \xi)$. We therefore {\it define} $d$ using Equation \eqref{defnextder}, for a form $\omega = \sum_{IJ} dx^I \wedge d\xi^J \cdot \omega_{IJ}$, where the indices in $I$ and $J$ are in standard order:

\[
d \widetilde{\omega} := \sum_{I, J} (-1)^{|I| + |J|} \, d\widetilde{x}^I \wedge d \widetilde{\xi}^J \wedge d\widetilde{\omega}_{IJ}.
\]\\

One may check that this formula for $d$ continues to hold with the indices not necessarily in standard order. We now show that $d$ so defined satisfies the desired properties. First, it is obvious from the definition that for any function $f$, $df$ is the differential of $f$.

It suffices to check the graded derivation property for forms $\omega = dx^I \wedge d\xi^J \cdot f, \eta = dx^K \wedge d\xi^L \cdot g$, where $|I| + |J|$. Let $\sigma := \#\{k \in K : \langle deg(x^k), deg(f) \rangle = 1\} +  \# \{ l \in L : \langle deg(\xi^l), deg(f) \rangle = 1\}$. We have:

\begin{align*}
&d(\omega \wedge \eta)\\
=& (-1)^{|I| + |J| + |K| + |L|} dx^I \wedge d\xi^J \wedge dx^K \wedge d\xi^L \wedge (-1)^{\sigma} (df \cdot g + f \cdot dg)\\[2.8mm]
&d\omega \wedge \eta + (-1)^{|I|+|J|} \omega \wedge d\eta \\
=& (-1)^{|I| + |J|} dx^I \wedge d\xi^J \wedge df \wedge dx^K \wedge d\xi^L \cdot g + (-1)^{|I|+|J|} dx^I \wedge d\xi^J \cdot f \wedge (-1)^{|K| + |L|} dx^K \wedge d\xi^L \cdot dg\\
=&(-1)^{|I| + |J| + |K| + |L|} dx^I \wedge d\xi^J \wedge dx^K \wedge d\xi^L \wedge (-1)^{\sigma} (df \cdot g + f \cdot dg).
\end{align*}

To show $d^2 = 0$, we begin by showing that $d^2f = 0$ for a function $f$ on $U$:

\begin{align*}
d^2f = & \sum_{i,k} dx^i \wedge dx^k \frac {\partial^2 f} {\partial x^k \partial x^i} + \sum_{i, l} dx^i \wedge d\xi^l \frac {\partial^2 f} {\partial \xi^l \partial x^i}\\
& + \sum_{j,m} d\xi^j \wedge dx^m \frac {\partial^2 f} {\partial x^m \partial \xi^j} + \sum_{j,n} d\xi^j \wedge d\xi^n \frac {\partial^2 f} {\partial \xi^n \partial \xi^j};
\end{align*}

\noindent it is a tedious but straightforward computation to check using $\Zn$-antisymmetry that the second and third sums cancel each other out, and that the first and fourth sums are both zero. (To show the fourth sum is zero, one also needs the fact that if $\xi^j$ is odd, $\partial^2 f / \partial (\xi^j)^2 = 0$).

For the general case, note first that $d(dx^I \wedge d\xi^J) = 0$ for any multiindices $I, J$; this follows from the fact that $d^2f = 0$ by induction on $|I| + |J|$ and the graded derivation property. Then, for a form of the type $dx^I \wedge d\xi^J \cdot  \widetilde{\omega}_{IJ}$, we have by the graded derivation property that:

\begin{align*}
d^2(dx^I \wedge d\xi^J \cdot  \widetilde{\omega}_{IJ}) &=d(dx^I \wedge d\xi^J) \wedge d\omega_{IJ} + (-1)^{|I| + |J|} dx^I \wedge d\xi^J \wedge d^2 \omega_{IJ}\\
&= 0.
\end{align*}

For the case of arbitrary $M$, note that $M$ is covered by coordinate charts $U$, on which $d$ is defined as above. If $\omega$ is a form on $U \cap U'$, $d_U\omega$ and $d_{U'}\omega$ agree on $U \cap U'$ by uniqueness, hence $d$ is globally well-defined on $M$.
\end{proof}

\begin{prop}
$d: \Omega^k(M) \to \Omega^{k+1}(M)$ is $\cJ$-adically continuous.
\end{prop}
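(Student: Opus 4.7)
The plan is to verify $\mathcal{J}$-adic continuity locally on each coordinate chart $U$ with coordinates $(x^i, \xi^j)$, by establishing the filtration estimate
\[
d\bigl(\mathcal{J}^n \Omega^k(U)\bigr) \subseteq \mathcal{J}^{n-1} \Omega^{k+1}(U) \qquad \text{for all } n \geq 1.
\]
Since addition is a homeomorphism in the $\mathcal{J}$-adic topology, such a filtration estimate immediately yields continuity: given any basic neighborhood $d\omega + \mathcal{J}^m \Omega^{k+1}(U)$ of $d\omega$, the neighborhood $\omega + \mathcal{J}^{m+1} \Omega^k(U)$ of $\omega$ maps into it under $d$. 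The statement is local in nature, so once it is established over every coordinate chart, continuity over an arbitrary open set follows by working sheaf-theoretically.

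To prove the filtration estimate, I would first reduce to a monomial. By $\mathbb{R}$-linearity and the fact that $\mathcal{J}^n \Omega^k(U)$ is the $\mathcal{O}(U)$-span of forms $dx^I \wedge d\xi^J \cdot f$ with $f \in \mathcal{J}^n(U)$, it suffices to treat such a form. By the explicit formula \eqref{defnextder} extended to indices in arbitrary order,
\[
d\bigl(dx^I \wedge d\xi^J \cdot f\bigr) = (-1)^{|I|+|J|}\, dx^I \wedge d\xi^J \wedge df.
\]
Now expand $df$ via formula \eqref{formuladiff}:
\[
df = \sum_i dx^i \cdot \frac{\partial f}{\partial x^i} + \sum_j d\xi^j \cdot \frac{\partial f}{\partial \xi^j}.
\]
Each partial derivative is a homogeneous $\mathbb{Z}^n_2$-graded derivation on $\mathcal{O}(U)$, so Proposition \ref{dercont} (more precisely, its proof, which shows $X(\mathcal{J}^n) \subseteq \mathcal{J}^{n-1}$ for any derivation $X$) implies $\partial_{x^i} f,\,\partial_{\xi^j} f \in \mathcal{J}^{n-1}(U)$ whenever $f \in \mathcal{J}^n(U)$. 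Hence $df \in \mathcal{J}^{n-1} \Omega^1(U)$, and wedging with the $\mathcal{O}$-basis element $dx^I \wedge d\xi^J$ keeps us inside $\mathcal{J}^{n-1} \Omega^{k+1}(U)$.

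The remaining step is purely formal: from the inclusion $d\bigl(\mathcal{J}^{m+1} \Omega^k(U)\bigr) \subseteq \mathcal{J}^m \Omega^{k+1}(U)$, together with the fact that the $\mathcal{J}$-adic topology is translation-invariant (so a basis of neighborhoods of any point $d\omega$ is given by $d\omega + \mathcal{J}^m \Omega^{k+1}(U)$), we conclude that the preimage under $d$ of a basic open set is open, proving continuity on $U$. The argument is essentially the same as the one given in Proposition \ref{dercont} for derivations. The only potential subtlety — and hence the step that requires the most care — is verifying that the partial derivatives really do lower $\mathcal{J}$-filtration by only one when applied to arbitrary sections of $\mathcal{O}(U)$ (not merely polynomial sections); but this is precisely the content of Proposition \ref{dercont}, which already leverages $\mathcal{J}$-adic Hausdorff completeness of $\mathcal{O}$.
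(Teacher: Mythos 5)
Your proof is correct and follows essentially the same route as the paper: reduce to a coordinate chart by locality, establish the filtration estimate $d(\cJ^{N}\Omega^k(U)) \subseteq \cJ^{N-1}\Omega^{k+1}(U)$ via the explicit local formula for $d$ and the fact (from the proof of Proposition \ref{dercont}) that derivations lower the $\cJ$-filtration by one, and then deduce continuity by the standard translation-invariance argument. No gaps.
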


\begin{proof}
Since $d$ is local, it suffices to verify the proposition locally in a coordinate chart $U$. The proof of Proposition \ref{dercont} implies that $d(\mathcal{J}^N(U)) \subseteq (\mathcal{J}^{N-1} \cdot \Omega^1)(U)$. Given a $k$-form $\omega = \sum_{IJ} dx^I \wedge d\xi^J \cdot  \omega_{IJ}$ such that $\omega_{IJ} \in \mathcal{J}^N(U)$, Formula \eqref{defnextder} then implies that $d\omega$ lies in $(\mathcal{J}^{N-1} \cdot \Omega^{k+1})(U)$. Exactly as in Prop. \ref{dercont}, one may then use this to prove $\cJ$-adic continuity of $d$ by checking that $d^{-1}(\eta + \cJ^N \cdot \Omega^{k+1}(U))$ is $\cJ$-adically open for any $\eta \in \Omega^{k+1}(U)$.
\end{proof}

The exterior derivative is compatible with pullbacks:

\begin{prop}\label{functextder}
Let $\Phi: M \to N$ be a morphism of $\Zn$-supermanifolds. Then for any $k$, $d(\Phi^*(\omega)) = \Phi^*(d \omega)$ for any $\omega \in \Omega^k(N)$, where $\Phi^*: \Om^\bullet(N) \to \Om^\bullet(M)$ is the pullback.
\end{prop}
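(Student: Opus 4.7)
The plan is to reduce the identity to the case of functions ($k = 0$), where it becomes essentially the chain rule of Proposition \ref{chainrule}, and then extend to arbitrary $k$-forms via the graded Leibniz rule and the multiplicativity of $\Phi^*$.

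First, I would observe that both $d \circ \Phi^*$ and $\Phi^* \circ d$ are $\R$-linear and $\cJ$-adically continuous: the continuity of $d$ is the preceding proposition, and $\Phi^*$ is a morphism of $\cO_N$-modules, hence $\cJ$-adically continuous. As $d$ is local and $\Phi^*$ commutes with restrictions, it suffices to work in coordinate charts $(U, u = (x,\xi))$ on $M$ and $(V, v = (y,\eta))$ on $N$ with $|\phi|(U) \subseteq V$. Any $k$-form on $V$ can be written as a finite sum $\omega = \sum_{I,J} dy^I \wedge d\eta^J \cdot \omega_{IJ}$ (finite since $|I|+|J| = k$ is bounded), and by $\R$-linearity I may restrict to a single term $\omega = dy^I \wedge d\eta^J \cdot f$.

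Next I would dispose of the base case $k = 0$, i.e.\ $\omega = f \in \cO(V)$. Using the local formula \eqref{formuladiff} for $d$, the formula \eqref{localformpullback} for the pullback, and the chain rule (Proposition \ref{chainrule}), I compute
\[
\Phi^*(df) = \Phi^*\!\left(\sum_b dv^b \cdot \partial_{v^b} f\right) = \sum_b d\phi^*(v^b) \cdot \phi^*(\partial_{v^b} f),
\]
\[
d\phi^*(f) = \sum_a du^a \cdot \partial_{u^a}\phi^*(f) = \sum_{a,b} du^a \cdot \partial_{u^a}\phi^*(v^b) \cdot \phi^*(\partial_{v^b} f) = \sum_b d\phi^*(v^b) \cdot \phi^*(\partial_{v^b} f).
\]
These agree, establishing the case $k=0$.

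Finally, for the general term $\omega = dy^I \wedge d\eta^J \cdot f$, a routine induction on $|I|+|J|$ based on $d^2 y^i = d^2 \eta^j = 0$ and the graded Leibniz rule of Proposition \ref{uniqueextder} gives $d(dy^I \wedge d\eta^J) = 0$; hence $d\omega = (-1)^{|I|+|J|}\, dy^I \wedge d\eta^J \wedge df$. Since $\Phi^*$ is multiplicative for the wedge product,
\[
\Phi^*(d\omega) = (-1)^{|I|+|J|}\, dt^I \wedge d\theta^J \wedge \Phi^*(df),
\]
where $t^i := \phi^*(y^i)$ and $\theta^j := \phi^*(\eta^j)$. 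Similarly $\Phi^*(\omega) = dt^I \wedge d\theta^J \cdot \phi^*(f)$ by \eqref{localformpullback}, and the same induction yields $d(dt^I \wedge d\theta^J) = 0$, so
\[
d\Phi^*(\omega) = (-1)^{|I|+|J|}\, dt^I \wedge d\theta^J \wedge d\phi^*(f).
\]
Invoking the $k=0$ identity $d\phi^*(f) = \Phi^*(df)$ then completes the proof. The main obstacle I anticipate is purely bookkeeping: keeping the graded signs and the placement of scalar coefficients consistent across the differing conventions of \eqref{formuladiff} (coefficient on the right) and \eqref{localformpullback} (coefficient on the left), and ensuring the Deligne signs in the wedge product absorb correctly. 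These are handled automatically by graded-commutativity, so no conceptual difficulty arises once the local formulas are in place.
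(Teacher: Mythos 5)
Your proposal is correct and follows essentially the same route as the paper: reduce to a coordinate chart, verify the $k=0$ case by combining the local formulas \eqref{formuladiff} and \eqref{localformpullback} with the chain rule of Proposition \ref{chainrule}, and then handle a general term $dy^I \wedge d\eta^J \cdot f$ via $d(dy^I\wedge d\eta^J)=0$, the multiplicativity of $\Phi^*$, and the $k=0$ identity. The opening remark about $\cJ$-adic continuity is harmless but not needed, since each $k$-form is a finite $\cO$-linear combination of the basic terms.
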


\begin{proof}
Since $d$ is a local operator, we may assume $M$, $N$ are coordinate charts with coordinates $u=(x, \xi)$ and $v=(y, \eta)$ respectively. Then $\Phi$ is given by $\phi^*(x, \xi) = (t_1(x, \xi), \dotsc,t_r(x, \xi), \theta_1(x, \xi), \dotsc, \theta_s(x, \xi))$. 
We prove the proposition for $k = 0$ first. Using the local form of $\Phi^*$ \eqref{localformpullback} and the chain rule, for any function $f$, we have

\begin{align*}
\Phi^*(df) 
&= \Phi^*\left(\sum dv^i \partial_{v^i}f\right)\\
&= \sum_k dt_k \cdot \phi^*\left( \frac {\partial f} {\partial y^k} \right) + \sum_l d\theta_l \cdot \phi^* \left( \frac {\partial f} {\partial \eta^l} \right)\\
&= \sum_k \sum_i du^i \cdot \frac {\partial t_k } {\partial u^i}\, \phi^*\left( \frac {\partial f} {\partial y^k} \right) + \sum_l \sum_i du^i \cdot \frac {\partial \theta_l } {\partial u^i}\, \phi^* \left( \frac {\partial f} {\partial \eta^l} \right)\\
&= \sum_i du^i \cdot \frac {\partial \phi^*(f)} {\partial u^i} 
= d \Phi^*(f).
\end{align*}\\

For general $k$, any $k$-form is a linear combination of forms of the type $\omega = dy^{I} \wedge 
d\eta^{J} 
\cdot f$. Applying the $k = 0$ case, we have:

\begin{align*}
\phi^*(d \omega) &= \phi^*((-1)^{|I| + |J|} dy^{i_1} \wedge \dotsc \wedge dy^{i_l} \wedge d\eta^{j_1} \wedge \dotsc \wedge d\eta^{j_m} \wedge df)\\
&= (-1)^{|I| + |J|} dt_{i_1} \wedge \dotsc \wedge dt_{i_l} \wedge d\theta_{j_1} \wedge \dotsc \wedge d\theta_{j_m} \wedge \phi^*(df)\\
&=  (-1)^{|I| + |J|} d\phi^*(y^{i_1}) \wedge \dotsc \wedge d\phi^*(y^{i_l}) \wedge d\phi^*(\eta^{j_1}) \wedge \dotsc \wedge d\phi^*(\eta^{j_m}) \wedge d\phi^*(f)\\
&= d\phi^*(\omega).
\end{align*}
\end{proof}

By Proposition \ref{functextder}, $d$ commutes with the restriction homomorphisms $r^*_{UV}: \Omega^k(V) \to \Omega^k(U)$ where $U \subset V$ are open subsupermanifolds of a supermanifold $M$. Hence, $d$ turns $\Om^\bullet_M$ into a sheaf of differential $\Zn$-graded algebras such that the differential is $\cJ$-adically continuous.

\medskip
\subsection{Cotangent space.}

\begin{defn}
The {\it cotangent space} to $M$ at $m$ is the $\Zn$-graded $\R$-vector space $T^*_mM := \iHom_{\R}(T_mM, \R)$.
\end{defn}

If $\dim(M) = p|{\bf q}$, then $\dim_{\R}(T^*_mM) = p|{\bf q}$; indeed, given a coordinate system $u=(x, \xi)$ centered at a point $m$, the cotangent vectors $du|_m=(dx^i|_m, d\xi^a|_m)$ form a basis of $T_m^*M$.

\medskip
For the cotangent space, we have the analogues of previously proven propositions about the tangent space:
\begin{prop}
Let $\omega$ be a differential $1$-form defined in a neighborhood of $m$. Then $\omega$ induces a cotangent vector $\omega_m$ to $M$ at $m$. If $\omega$ is homogeneous, the degree of $\omega_m$ is the same as that of $\omega$.
\end{prop}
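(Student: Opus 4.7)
The plan is to imitate the construction for tangent vectors carried out in the analogous earlier proposition. Given a $1$-form $\omega \in \Om^1(U)$ with $m \in U$, I would exploit the evaluation pairing $(\,\cdot\,,\,\cdot\,)\colon \cT M \otimes \cT^*\!M \to \cO$: for $v \in T_m M$, pick any vector field $X$ on a neighborhood of $m$ whose class in $(\cT M)_m / \mathfrak{m}(\cT M)_m \cong T_m M$ is $v$ (possible by the previously established fiber identification), and set
$$
\omega_m(v) := \ev_m\bigl(\varepsilon\bigl([\,(X,\omega)\,]_m\bigr)\bigr),
$$
where $\varepsilon\colon \cO_m \to \Ci_m$ is the reduction and $\ev_m\colon \Ci_m \to \R$ is evaluation at $m$ (both of $\Zn$-degree zero, exactly as in the tangent case). $\R$-linearity of $\omega_m$ is then immediate from the $\R$-linearity of the pairing and of $\varepsilon, \ev_m$.

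The main step to verify is well-definedness, i.e.\ independence of the choice of lift $X$. If $X|_m = Y|_m$, the fiber identification $T_m M \cong (\cT M)_m / \mathfrak{m}(\cT M)_m$ gives $[X - Y]_m \in \mathfrak{m}\cdot(\cT M)_m$; writing $X - Y = \sum_k Z_k \cdot h_k$ near $m$ with $h_k \in \mathfrak{m}$ and using the $\cO$-bilinearity of the pairing (which is built into the Deligne--Morgan convention $(fZ, g\omega) = (-1)^{\langle \deg(Z),\, \deg(g)\rangle} fg\,(Z,\omega)$), one obtains $[(X - Y, \omega)]_m \in \mathfrak{m}\cdot\cO_m = \ker(\ev_m\circ\varepsilon)$. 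Thus the displayed formula depends only on $v$.

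For the degree assertion, I would work in a coordinate chart $(u^i)$ about $m$ and expand $\omega = \sum_i du^i \cdot f_i$ using the dual basis from Proposition \ref{tangentbasis}. Since $\varepsilon$ annihilates $\cJ$, only those summands with $\deg(f_i) = 0$ contribute non-trivially to $\omega_m$, and for these $\deg(du^i) = \deg(\omega)$; hence $\omega_m$ is homogeneous of the same $\Zn$-degree as $\omega$.

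I do not anticipate a serious obstacle: once the fiber description $T_m M \cong (\cT M)_m / \mathfrak{m}(\cT M)_m$ is available, this construction is simply the cotangent dual of the earlier tangent-vector construction. The only delicate bookkeeping is in tracking the Koszul signs of the pairing during the well-definedness check, but these drop out by the time one arrives at the final formula.
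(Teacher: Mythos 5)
Your proposal is correct and follows exactly the route the paper intends: the paper omits the proof, stating only that it is ``similar to the case of the tangent space,'' and your construction $\omega_m(v) := \ev_m\bigl(\varepsilon\bigl([(X,\omega)]_m\bigr)\bigr)$ is precisely the dual of the paper's formula \eqref{tgtvecfromfield}, with the well-definedness check correctly reduced to the fiber identification $T_mM \simeq (\cT M)_m/\mathfrak{m}\cdot(\cT M)_m$ and the fact that $\mathfrak{m}=\ker(\ev_m\circ\varepsilon)$. The degree argument via the coordinate expansion $\omega=\sum_i du^i\cdot f_i$ is also sound, so there is nothing to add.
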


\begin{prop}
Let $m \in M$ be a point. Then the cotangent space $T^*_m M$ is isomorphic to the $\Zn$-graded $\R$-vector space $(\cT^*M) _m/\left(\mathfrak{m} \cdot (\cT ^*M)_m\right)$.
\end{prop}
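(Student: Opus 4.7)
The plan is to build an explicit natural map and reduce to the basis statement, paralleling the proof of the analogous tangent-space result. First I would define an $\R$-linear map
$$\Phi : (\cT^*M)_m \;\longrightarrow\; T^*_mM$$
by $\Phi([\omega]_m)(v) := (\ev_m\circ\varepsilon)\bigl([(X,\omega)]_m\bigr)$, where $X$ is any vector field defined near $m$ with $X|_m=v$. Since the $\partial_{u^i}|_m$ span $T_mM$ by Proposition \ref{tangentbasis}, such an extension $X$ always exists. A short check --- using the $\R$-linearity of $\ev_m\circ\varepsilon$, the $\cJ$-adic continuity of vector fields and the same polynomial-approximation trick that appears in Proposition \ref{tangentbasis} --- shows that $\Phi([\omega]_m)(v)$ is independent of the chosen extension $X$, is $\R$-linear in $v$, and has the same $\Zn$-degree as $\omega$, so $\Phi$ is well-defined and degree-preserving.

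Next I would verify that $\Phi$ is $\cO_m$-semilinear along the augmentation $\cO_m\twoheadrightarrow\cO_m/\mathfrak{m}\cong\R$. The Deligne sign rule for the evaluation pairing gives
$$(X,g\omega)\;=\;(-1)^{\langle \deg(X),\deg(g)\rangle}\,g\cdot(X,\omega),$$
and applying $\ev_m\circ\varepsilon$ yields $\Phi(g\cdot[\omega]_m)(v)=(\ev_m\circ\varepsilon)(g)\cdot\Phi([\omega]_m)(v)$. In particular $\Phi$ annihilates $\mathfrak{m}\cdot(\cT^*M)_m$ and therefore descends to an $\R$-linear map
$$\bar\Phi : (\cT^*M)_m/\mathfrak{m}(\cT^*M)_m \;\longrightarrow\; T^*_mM.$$

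Finally I would finish with a basis argument. Fix coordinates $u=(x^i,\xi^a_\mu)$ centered at $m$. By Proposition \ref{tangentbasis} and the definition of $\cT^*M$ as the internal dual, $(\cT^*M)_m$ is a free $\cO_m$-module on $\{du^i\}$, so the left-hand quotient has $\R$-basis $\{\overline{du^i}\}$; dually, $\{du^i|_m\}$ is the $\R$-basis of $T^*_mM$ dual to $\{\partial_{u^i}|_m\}$. The defining pairing $(\partial_{u^j},du^i)=\delta^i_j$ together with the definition of $\Phi$ forces $\Phi(du^i)=du^i|_m$, so $\bar\Phi$ sends a basis to a basis and is an isomorphism of $\Zn$-graded $\R$-vector spaces. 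The only real bookkeeping concern is tracking the $\Zn$-signs in the semilinearity step cleanly enough to guarantee the factorization through $\mathfrak{m}\cdot(\cT^*M)_m$; once that is in place, the remainder is essentially formal and runs entirely in parallel with the tangent-space argument.
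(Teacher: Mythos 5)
Your proof is correct and is essentially the explicit version of the argument the paper leaves to the reader: it mirrors the tangent-space case by evaluating the pairing through $\ev_m\circ\varepsilon$, checking the result kills $\mathfrak{m}\cdot(\cT^*M)_m$, and matching the induced basis $\{\overline{du^i}\}$ of the quotient with the dual basis $\{du^i|_m\}$ of $T^*_mM$. The $\Zn$-sign you worry about in the semilinearity step is harmless: whenever $\deg(g)\neq 0$ one has $g\in\cJ_m\subseteq\mathfrak{m}$, so $(\ev_m\circ\varepsilon)(g)=0$ and both sides vanish, while for $\deg(g)=0$ the sign is $+1$.
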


The proofs are similar to those in the case of the tangent space, and are left to the reader.

\medskip
\subsection{Poincar\'e's lemma}

The {\it de Rham complex} on a $\Zn$-manifold $M$ is the complex $(\Om^\bullet_M, d)$, where $\Om^\bullet$ is the sheaf of differential forms and $d$ is the exterior differential.  We shall compute the cohomology of this complex.

\medskip

\begin{thm}[Poincar\'e's lemma for $\Zn$-supermanifolds]
Let $M$ be a $\Zn$-supermanifold. Then the de Rham complex $(\Om^\bullet_M, d)$ of $M$ is a resolution of the constant sheaf $\underline{\R}$.
\end{thm}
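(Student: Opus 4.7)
The plan is to construct, locally on a star-shaped $\Zn$-superdomain, a cochain homotopy between the identity and the pullback along the constant morphism to the origin, following the classical scaling argument adapted to the $\Zn$-graded setting. As a first step, I would reduce the statement to a local one: by translation invariance, it suffices to verify on every $\Zn$-superdomain $U^{p|\mathbf{q}}$ centered at $0$ that any closed $k$-form with $k\geq 1$ is exact, and that any function $f$ with $df=0$ equals its value at the origin. Exactness at degree $0$ (injectivity of $\underline{\R}\hookrightarrow\cO_M$) is immediate.

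Next, I would form the product $I\times U$, where $I\subset\R$ is an open interval containing $[0,1]$ (a $\Zn$-supermanifold by the previous section on finite products), and define the scaling morphism $\sigma\colon I\times U\to U$ by $\sigma^*(u^i)=t\cdot u^i$ on coordinates, using the Chart Theorem. For $\omega\in\Om^k(U)$, the pullback $\sigma^*\omega$ decomposes uniquely as
\[
\sigma^*\omega=\alpha+dt\wedge\beta,
\]
where $\alpha$ contains no $dt$-factor and $\beta$ is a $(k-1)$-form along the $U$-directions (both depending smoothly on $t$). I would then define the homotopy operator $h\colon\Om^k(U)\to\Om^{k-1}(U)$ by
\[
h\omega:=\int_0^1\beta(t)\,dt,
\]
where the integration is performed coefficient-by-coefficient on the formal power series expansion of $\beta$ in the graded coordinates.

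Using the splitting $d_{I\times U}=d_U+dt\wedge\partial_t$ together with functoriality of $d$ (Proposition~\ref{functextder}), a short calculation identifies the $dt$-component of $\sigma^*(d\omega)$ as $\partial_t\alpha-d_U\beta$, and hence
\[
dh\omega+hd\omega=\alpha(1)-\alpha(0)=\sigma_1^*\omega-\sigma_0^*\omega,
\]
where $\sigma_t$ denotes the scaling at fixed $t$. Since $\sigma_1=\id_U$ and $\sigma_0$ factors through the origin, $\sigma_0^*\omega=0$ for $k\geq 1$, yielding $\omega=dh\omega+hd\omega$; closed forms are therefore exact. The case $k=0$ follows from the same identity: $df=0$ forces $\sigma^*f$ to be independent of $t$, so $f=\sigma_1^*f=\sigma_0^*f$ is the constant value of $f$ at the origin.

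The main obstacle will be making the integration $\int_0^1\beta(t)\,dt$ rigorous in the presence of non-nilpotent graded coordinates. Writing the coefficients of $\beta$ as formal power series $\sum_\nu b_\nu(x,t)\,\xi^\nu$ in the graded variables, I would define the integral termwise as $\sum_\nu\!\big(\!\int_0^1 b_\nu(x,t)\,dt\big)\,\xi^\nu$; convergence of this sum as an element of $\Om^{k-1}(U)$ rests on the $\cJ$-adic Hausdorff completeness of $\Om^{k-1}$. The compatibility of this termwise integration with $d$, wedge product, and pullback along $\sigma$ — needed to pass the identity $dh+hd=\id-\sigma_0^*$ from polynomial sections to arbitrary sections — follows from the $\cJ$-adic continuity of these operations established in the preceding subsections, combined with a polynomial approximation argument exactly as in the proof of Proposition~\ref{chainrule}.
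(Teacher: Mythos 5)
Your proposal is correct in substance, but it follows a genuinely different route from the paper. You prove local exactness by the classical scaling homotopy: the morphism $\sigma\colon I\times U\to U$, $\sigma^*(u^i)=t\,u^i$ (legitimate by the Chart Theorem, since multiplying any coordinate by the degree-$0$ section $t$ preserves its $\Zn$-degree), the decomposition $\sigma^*\omega=\alpha+dt\wedge\beta$, and fiber integration $h\omega=\int_0^1\beta(t)\,dt$ performed termwise on the $\xi$-power-series expansion; the identity $dh+hd=\sigma_1^*-\sigma_0^*$ then kills closed forms in positive degree and identifies $\ker(d\colon\cO\to\Om^1)$ with the constants. This treats all coordinates — classical, even non-nilpotent formal, and odd — simultaneously, at the cost of having to construct the product $I\times U$, justify termwise $t$-integration (where, as you note, $\cJ$-adic Hausdorff completeness, or simply the fact that the result is again a formal power series with smooth coefficients, does the work), and verify that $d_U$ commutes with the fiber integral. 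The paper instead first splits the global de Rham complex of $\R^{p|\mathbf{q}}$ as a tensor product of the complexes of $\R^{p|\mathbf{q_{\bar 0}}}$ and $\R^{0|\mathbf{q_{\bar 1}}}$ (Lemma \ref{dRtenprod}), invokes the K\"unneth formula, and then peels off one even formal coordinate at a time via a homotopy operator built from a formal antiderivative in that single variable (projection/zero-section pair), inducting down to the classical Poincar\'e lemma for $\R^p$, with a separate short computation for the odd factors. Your approach avoids the tensor-product lemma, K\"unneth, and the induction, and is closer to the classical geometric argument; the paper's approach avoids products with an interval and fiber integration altogether, exploiting instead the algebraic structure of $\cO=\Ci[[\xi]]$. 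One small wrinkle you should smooth out: for a star-shaped $|U|$ the reduced map $(t,x)\mapsto tx$ need not land in $|U|$ for $t$ in an open interval strictly containing $[0,1]$; either work with $U\cong\R^{p|\mathbf{q}}$ (which suffices for the stalk-level statement) or replace $t$ by a smooth cutoff $\chi(t)\in[0,1]$ with $\chi\equiv0$ near $t\le 0$ and $\chi\equiv1$ near $t\ge1$ — this is a standard classical adjustment and does not interact with the $\Zn$-graded features of your argument.
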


\medskip

Let us call the complex of $\Zn$-graded vector spaces $(\Om^\bullet(M), d)$ the {\it global de Rham complex}. We will begin with the following lemma (which is stated without proof in the $\Zs$-graded case in \cite{DM}).

\begin{lem}\label{dRtenprod}
The global de Rham complex of $\R^{p|{\bf q}}$ is isomorphic to the total tensor product complex (over $\R$) of the pullbacks of the global de Rham complexes of $\R^{p|{\bf q_{\bar{0}}}}$ and $\R^{0|{\bf q_{\bar{1}}}}$ via the projection maps to $\R^{p|{\bf q_{\bar{0}}}}$ and $\R^{0|{\bf q_{\bar{1}}}}$.
\end{lem}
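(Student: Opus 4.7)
The plan is to use the canonical product decomposition $\R^{p|\mathbf{q}} \cong \R^{p|\mathbf{q}_{\bar 0}} \times \R^{0|\mathbf{q}_{\bar 1}}$ furnished by the product construction of the previous section, with associated projections $\pi_1$ and $\pi_2$, and to define a map
$$ \Phi: \Omega^\bullet(\R^{p|\mathbf{q}_{\bar 0}}) \otimes_\R \Omega^\bullet(\R^{0|\mathbf{q}_{\bar 1}}) \longrightarrow \Omega^\bullet(\R^{p|\mathbf{q}}), \qquad \omega \otimes \eta \;\longmapsto\; \pi_1^*(\omega) \wedge \pi_2^*(\eta). $$
The task is then to check that $\Phi$ is an isomorphism of complexes of $\Zn$-graded $\R$-vector spaces.

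That $\Phi$ intertwines the differentials is immediate from Proposition \ref{functextder} (compatibility of $d$ with pullbacks) together with the graded Leibniz rule for the wedge product: for the total differential $d_{tot}(\omega \otimes \eta) = d\omega \otimes \eta + (-1)^{|\omega|} \omega \otimes d\eta$ of the tensor product complex, a one-line computation yields $\Phi \circ d_{tot} = d \circ \Phi$. Likewise $\Phi$ preserves both the $\N$-grading and the $\Zn$-grading by construction.

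The heart of the argument is bijectivity. Since $\R^{p|\mathbf{q}}$ is a single coordinate chart with coordinates $(x, \xi_{\bar 0}, \xi_{\bar 1})$, every $k$-form admits a unique coordinate expansion $\sum_{I,J,K} f_{IJK}\, dx^I \wedge d\xi_{\bar 0}^J \wedge d\xi_{\bar 1}^K$, where under the Deligne sign rule $dx^i$ and $d\xi_{\bar 0}^a$ square to zero (so $I,J$ run over strictly increasing multi-indices), while $d\xi_{\bar 1}^a$ commutes with itself (so $K$ runs over symmetric multi-indices, yielding polynomial, not exterior, monomials in the $d\xi_{\bar 1}$). By the Chart Theorem each coefficient $f_{IJK} \in \cO(\R^{p|\mathbf{q}})$ decomposes uniquely as a finite $\R$-linear combination $\sum_\alpha g_{IJK,\alpha}(x, \xi_{\bar 0})\, \xi_{\bar 1}^\alpha$ with $g_{IJK,\alpha} \in \Ci(\R^p)[[\xi_{\bar 0}]]$. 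Re-grouping the factors of $\Zn$-degrees in $\mathbf{q}_{\bar 0}$ with those in $\mathbf{q}_{\bar 1}$ separates each monomial into a product of a pullback from $\R^{p|\mathbf{q}_{\bar 0}}$ wedged with a pullback from $\R^{0|\mathbf{q}_{\bar 1}}$, which is precisely the image under $\Phi$ of the corresponding tensor. Uniqueness of the coordinate expansion then gives the inverse of $\Phi$.

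The main obstacle I anticipate is the careful bookkeeping of signs in this last step: one must verify that the re-orderings needed to turn a standard-ordered mixed monomial into a wedge of the form $\pi_1^*(\alpha) \wedge \pi_2^*(\beta)$ are carried out with the correct Deligne-convention signs, and that the resulting inverse is indeed $\R$-linear. One should also note that in each fixed $\N$-degree $k$ no completion of the tensor product is needed, because $\Omega^j(\R^{0|\mathbf{q}_{\bar 1}})$ is $\R$-finite-dimensional for every $j$ (both $\Lambda(\xi_{\bar 1})$ and the degree-$j$ symmetric power in the $d\xi_{\bar 1}$ being finite-dimensional), so the $\R$-algebraic tensor product genuinely exhausts $\Omega^k(\R^{p|\mathbf{q}})$.
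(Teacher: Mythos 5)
Your proposal is correct and follows essentially the same route as the paper: the same map $\omega \otimes \eta \mapsto \pi_1^*(\omega) \wedge \pi_2^*(\eta)$, with bijectivity resting on the factorization $\cO(\R^{p|\mathbf{q}}) \cong \Ci(\R^p)[[\xi_{\bar 0}]] \otimes_\R \R[\xi_{\bar 1}]$ (a finite sum in the nilpotent odd variables) together with freeness of the modules of forms. The only stylistic difference is that you verify the cochain-map property by a direct Leibniz computation, whereas the paper defines the induced differential on the tensor product and identifies it with $d$ via the axiomatic characterization of Proposition \ref{uniqueextder}; both are valid.
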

In  this lemma, ${\bf q_{\bar{0}}}$ (resp. ${\bf q_{\bar{1}}}$) are the purely even (resp. the purely odd) part of ${\bf q}$, namely
\beas
{q_{\bar{0}, \gamma}}= \begin{cases} q_{\gamma} & \mbox{ if } \bar{\gamma}=\bar{0} \\ 0 & \mbox{ if } \bar{\gamma}=\bar{1}\end{cases}
&&
{q_{\bar{1},\mu}}= \begin{cases} 0 & \mbox{ if } \bar{\mu}=\bar{0} \\ q_{\mu} & \mbox{ if } \bar{\mu}=\bar{1}.\end{cases}
\eeas

\begin{proof}
 Let $\pi_+: \R^{p|{\bf q}} \to \R^{p|{\bf q_0}}$ and $\pi_-: \R^{p|{\bf q}} \to \R^{0|{\bf q_1}}$ denote the projection maps. Then $\pi^*_+(\omega) \otimes \pi^*_-(\eta) \mapsto \pi^*_+(\omega) \wedge \pi^*_-(\eta)$ defines a homomorphism of $\N$-graded $\Zn$-graded vector spaces $i: \pi^*_+(\Om^\bullet (\R^{p|{\bf q_0}})) \otimes_\R \pi^*_-(\Om^\bullet(\R^{0|{\bf q_1}})) \to \Om^\bullet(\R^{p|{\bf q}})$.

To show that $i$ is an isomorphism of $\mathbb{N}$-graded $\Zn$-super vector spaces, we find its inverse. Let $x^i, \psi^j$ be a coordinate system for $\R^{p|{\bf q_0}}$, and $\theta^k$ a coordinate system for $\R^{0|{\bf q_1}}$. Here the $x^i$ are of degree zero, $\psi^j$ are even of nonzero degree, and the $\theta^k$ are odd of nonzero degree. The pullbacks $\widetilde{x}^i := \pi^*_+(x^i), \widetilde{\psi}^j:= \pi^*_+(\psi^j), \widetilde{\theta}^k := \pi^*_-(\theta^k)$ constitute a coordinate system for $\R^{p|{\bf q}}$, hence the pullbacks $d\widetilde{x}^i = \pi^*_+(dx^i)$, $d\widetilde{\psi}^j = \pi^*_+(d\psi^j)$, $d\widetilde{\theta}^k = \pi^*_-(d\theta^k)$ constitute a global basis of the cotangent sheaf on $\R^{p|{\bf q}}$.

This implies that $\Omega^1(\R^{p|{\bf q}}) = \cO(\R^{p|{\bf q}}) \otimes_\R V$ as $\Zn$-super vector spaces, where $V$ is the $\Zn$-super vector space with basis $\{d \widetilde{x}^i, d \widetilde{\psi}^j, d \widetilde{\theta}^k\}$. Hence $\Om^\bullet(\R^{p|{\bf q}}) \cong \cO(\R^{p|{\bf q}}) \otimes_\R \Lambda^\bullet_\R(V)$. Similarly, $\pi^*_+(\Om^\bullet(\R^{p|{\bf q_0}})) \cong \pi^*_+(\cO(\R^{p|{\bf q_0}})) \otimes_\R \Lambda^\bullet_\R(V_+)$ and $\pi^*_-(\Om^\bullet(\R^{0|{\bf q_1}})) \cong \pi^*_-(\cO(\R^{0|{\bf q_1}})) \otimes_\R \Lambda^\bullet_\R(V_-)$.

We have $V = V_+ \oplus V_-$, where $V_+$ is the $\R$-span of $\{d \widetilde{x}^i, d \widetilde{\psi}^j \}$ and $V_-$ is the $\R$-span of $\{d\widetilde{\theta}^k\}$. Then the map $(v_+, v_-) \mapsto v_+ \otimes 1 + 1 \otimes v_-$ induces a natural isomorphism $\Lambda^\bullet_\R(V) \cong \Lambda^\bullet_\R(V_+) \otimes_\R \Lambda^\bullet_\R(V_-)$ by Corollary \ref{exteriordirsum}. 

Crucially, since $\R[\widetilde{\theta}^1, \dotsc, \widetilde{\theta}^{q_1}]$ is a polynomial algebra in the odd variables $\widetilde{\theta}^i$, we have that $\cO(\R^{p|{\bf q}}) \cong C^\infty(\R^p)[[\widetilde{\psi}^1, \dotsc, \widetilde{\psi}^{q_0}]] \otimes_\R \R[\widetilde{\theta}^1, \dotsc, \widetilde{\theta}^{q_1}] = \pi^*_+(\cO(\R^{p|{\bf q_0}})) \otimes_\R \pi^*_-(\cO(\R^{0|{\bf q_1}}))$, whence we have a sequence of isomorphisms

\begin{align*}
 \Om^\bullet(\R^{p|{\bf q}}) \cong & \cO(\R^{p|{\bf q}}) \otimes_\R \Lambda^\bullet_\R(V)\\
\cong &\cO(\R^{p|{\bf q}}) \otimes_\R \left[\Lambda^\bullet_\R(V_+) \otimes_\R \Lambda^\bullet_\R(V_-)\right]  \\
\cong &[ \pi^*_+(\cO(\R^{p|{\bf q_0}})) \otimes_\R \pi^*_-(\cO(\R^{0|{\bf q_1}}))] \otimes_\R \left[\Lambda^\bullet_\R(V_+) \otimes_\R \Lambda^\bullet_\R(V_-)\right]\\
\cong &\left[\pi^*_+(\cO(\R^{p|{\bf q_0}})) \otimes_\R \Lambda^\bullet_\R(V_+) \right] \otimes_\R \left[\pi^*_-(\cO(\R^{0|{\bf q_1}})) \otimes_\R \Lambda^\bullet_\R(V_-)\right] \\
\cong &\pi^*_+(\Om^\bullet (\R^{p|{\bf q_0}})) \otimes_\R \pi^*_-(\Om^\bullet(\R^{0|{\bf q_1}}))\\
\end{align*}

It is routine to check that the morphism $i$ defined above is left inverse to the composition of these isomorphisms, hence is also an isomorphism.

It is readily checked using coordinates that $\pi^*_+: \Om^\bullet(\R^{p|{\bf q_0}}) \to \Om^\bullet(\R^{p|{\bf q}})$ and $\pi^*_-: \Om^\bullet(\R^{0|{\bf q_1}}) \to \Om^\bullet(\R^{p|{\bf q}})$ are both injective. Letting $d_+$ (resp. $d_-$) denote the exterior differential on $\R^{p|{\bf q_0}}$ (resp. $\R^{0|{\bf q_1}}$), it follows we have well-defined differentials $\pi_+^* d_+$ on $\pi^*_+(\Om^\bullet(\R^{p|{\bf q_0}}))$ (resp. $\pi_-^* d_-$ on $\pi^*_-(\Om^\bullet(\R^{0|{\bf q_1}}))$, defined by

\[
(\pi^*_\pm d_{\pm})\omega_\pm:= \pi^*_{\pm}( d_{\pm} \sigma_\pm),
\]

\noindent for $\omega_+ \in \Om^\bullet(\R^{p|{\bf q_0}})$ (resp. $\omega_- \in \Om^\bullet(\R^{0|{\bf q_1}})$) where $\sigma_+$ (resp. $\sigma_-$) is the unique form in $\Om^\bullet(\R^{p|{\bf q_0}})$ (resp. $\Om^\bullet(\R^{0|{\bf q_1}})$) such that $\pi^*_\pm(\sigma_\pm) = \omega_\pm$.

The differentials $\pi^*_\pm d_\pm$ induce a differential $D$ on the $\Zn$-super vector space $\pi^*_+(\Om^\bullet (\R^{p|{\bf q_0}})) \otimes_\R \pi^*_-(\Om^\bullet(\R^{0|{\bf q_1}}))$ in a standard fashion:

\[
D(\omega \otimes \eta) := (\pi^*_+d_+) \omega \otimes \eta + (-1)^k \omega \otimes (\pi^*_-d_-) \eta,
\]\

\noindent where $\omega$ is a $k$-form and $\eta$ is any form, making $\pi^*_+(\Om^\bullet (\R^{p|{\bf q_0}})) \otimes_\R \pi^*_-(\Om^\bullet(\R^{0|{\bf q_1}}))$ into a complex of $\Zn$-super vector spaces.

Let us define a new differential $d'$ on $\Om^\bullet(\R^{p|{\bf q}})$ by $d' = i \circ D \circ i^{-1}$. We shall identify $d'$ with $d$. This could be done by direct computation, but we will show instead that $d'$ satisfies the axiomatic characterization of $d$ given in Proposition \ref{uniqueextder}.

First, $d'^2 =0$ since $D^2 = 0$. Now we will show that $d'f = df$ for all $f \in \cO({\R^{p|{\bf q}}})$. Note that any function in $\cO(\R^{p|{\bf q}})$ may be uniquely written as a finite sum of elements of the form $i(f_+ \otimes f_-)$ for unique $f_+ \in \pi^*_+(\cO(\R^{p|{\bf q_0}}))$ and $f_- \in \pi^*_-(\cO(\R^{0|{\bf q_1}}))$; by definition, $i(f_+ \otimes f_-) = f_+ \cdot f_-$. Let $g_+$ (resp. $g_-$) be the unique function in $\cO({\R^{p|{\bf q_0}}})$ (resp. $\cO({\R^{0|{\bf q_1}}})$) such that $\pi^*_\pm(g_\pm) = f_\pm$.

Then $d'[i (f_+ \otimes f_-)] = i[((\pi^*_+ d_+)f_+) \otimes f_- + f_+ \otimes (\pi^*_- d_-)f_-)] = \pi^*_+(d_+g_+) \cdot f_- + f_+ \cdot \pi^*_-(d_- g_-)$, which equals $d(f_+ \cdot f_-)$ by the Leibniz rule for $d$. Finally, the antiderivation property of $d'$ follows immediately from the definition of the induced differential $D$ on the tensor product. Hence $d'= d$, whence $i$ is a cochain map and thus an isomorphism of complexes.
\end{proof}

\bigskip

\begin{proof}[Proof of Poincar\'e's lemma]
Since the question is local, it suffices to prove the theorem for the global de Rham complex of $\R^{p|{\bf q}}$. In light of Lemma \ref{dRtenprod}, the K\"{u}nneth formula reduces the problem to proving the Poincar\'e lemma for $\R^{p|{\bf q_0}}$ and $\R^{0|{\bf q_1}}$ separately. Note that since $|\R^{p|{\bf q_0}}|$ and $|\R^{0|{\bf q_1}}|$ are both connected, the global sections of the sheaf $\underline{\R}$ in either case are just the constant functions and may be canonically identified with the real numbers $\R$.

\bigskip

\noindent \underline{$\R^{p|{\bf q_0}}$}: Here, we cannot directly reduce the global de Rham complex to the tensor product of the pullbacks of the global de Rham complexes of the factors as we did previously. Instead, we write $\R^{p|{\bf q_0}}$ as the product $\R^{p|{\bf q_0}'} \times \R^{0|{\bf i}}$, where ${\bf i}$ denotes the dimension $0|\dotsc |1| \dotsc |0$, and ${\bf q_0'}$ denotes the codimension of ${\bf i}$ in ${\bf q_0}$. Let $\pi: \R^{p|{\bf q_0}} \to \R^{0|{\bf q_0}'}$ be the projection morphism, and $s: \R^{p|{\bf q_0}'} \to \R^{0|{\bf q_0}}$ the zero-section morphism. Let $x, \psi$ denote (schematically) the coordinates on $\R^{p|{\bf q_0}'}$, and $\eta$ the coordinate on $\R^{0|{\bf i}}$. Then $\pi(x, \psi, \eta) = (x, \psi)$, and $s(x, \psi) = (x, \psi, 0)$. We will show that the maps of complexes $\pi^*$ and $s^*$ induce isomorphisms on cohomology by defining a homotopy operator $K: \Omega^k(\R^{p|{\bf q_0}}) \to \Omega^{k-1}(\R^{p|{\bf q_0}})$ as follows. Any differential $k$-form on $ \R^{p|{\bf q_0}}$ may be uniquely written as a sum of forms of two types:

\begin{align*}
\text{A) } & \pi^*(\sigma) \cdot f(x, \psi, \eta)\\
\text{B) } & \pi^*(\sigma) \wedge d\eta \cdot f(x, \psi, \eta),
\end{align*}\

\noindent where $\sigma$ is a form on $\R^{p|{\bf q_0'}}$ and $f(x, \psi, \eta)$ a function on $\R^{p|{\bf q_0}}$. Define:

\begin{align*}
& K(\pi^*(\sigma) \cdot f(x, \psi, \eta)) := 0\\
& K(\pi^*(\sigma) \wedge d\eta \, f(x, \psi, \eta)) := \pi^*(\sigma) \cdot F(x, \psi, \eta),
\end{align*}\

\noindent where $F(x, \psi, \eta)$ is the unique function in $C^\infty(\R^p)[[\psi, \eta]]$ such that

\begin{align*}
&\partial F/\partial \eta = f\\
&F(x, \psi, 0) = 0.
\end{align*}\

It is easy to see that such an $F$ is unique, if it exists. To show existence, note that $f$ may be written as $\sum_{k=0}^\infty \eta^k a_k(\psi)$, where the $a_k$ are uniquely determined formal power series in the coordinates $\psi$ with coefficients in $C^\infty(\R^p)$. Then $F := \sum_{k=0}^\infty \eta^{k+1} \frac {a_k(\psi)} {k+1}$ is an antiderivative of $f$ with respect to $\eta$, and $F(x, \psi, 0) = 0$. Hence the operator $K$ is well-defined.

\medskip

For $\omega$ of type A), we have:

\begin{align*}
(dK - Kd)(\omega) &= -K\bigg(\pi^*(d\sigma) \cdot f + (-1)^k\pi^*(\sigma) \bigg[ dx^i \cdot \frac {\partial f} {\partial x^i} + d\psi^j \cdot \frac {\partial f} {\partial \psi^j} + d\eta \cdot \frac {\partial f} {\partial \eta} \bigg] \bigg) \\
&=(-1)^{k-1} \pi^*(\sigma)[f - f(x, \psi, 0)],
\end{align*}\

\noindent and $(id - \pi^* \circ s^*)(\omega) = \pi^*(\sigma)[f - f(x, \psi, 0)]$.

\medskip

For $\omega$ of type B), we have:

\begin{align*}
&(dK - Kd)(\omega) \\
= &\pi^*(d\sigma) \cdot F + (-1)^{k-1}\pi^*(\sigma) \wedge \bigg[ dx^i \cdot \frac {\partial F} {\partial x^i} + d\psi^j \cdot \frac {\partial F} {\partial \psi^j}  + d\eta \cdot f \bigg] \\
-&\pi^*(d\sigma) \cdot F -(-1)^{k-1} \pi^*(\sigma) \wedge \bigg[dx^i \cdot \frac {\partial F} {\partial x^i} + d \psi^j \cdot \frac {\partial F} {\partial \psi^j} \bigg] \\
=& (-1)^{k-1}\omega,
\end{align*}\

\noindent and $(id - \pi^* \circ s^*)(\omega) = \omega$.

We have just shown that $\pi^* \circ s^*$ is homotopic to (a multiple of) the identity on $k$-forms; since $\pi \circ s = id$, $s^* \circ \pi^* = id$. It follows that $\pi^*$ and $s^*$ are chain homotopy inverses, hence the global de Rham complex $(\Om^\bullet(\R^{p|{\bf q_0}}), d)$ is chain-homotopy equivalent to $(\Om^\bullet(\R^{p|{\bf q_0}'}), d)$. By induction on $q_0$, the problem is thus reduced to the classical Poincar\'e lemma for $\R^{p}$.\\

\medskip

\noindent \underline{$\R^{0|{\bf q_1}}$}: Note that $\R[\theta^1, \dotsc, \theta^{q_1}] \cong \otimes_{i=1}^{q_1} \R[\theta^i]$ as $\Zn$-superalgebras over $\R$. Hence the global de Rham complex of $\R^{0|{\bf q}}$ is the tensor product over $\R$ of the pullbacks of the global de Rham complexes of the projections from $\R^{0|{\bf q_1}}$ onto the individual factors; the argument is entirely analogous to the one given in the proof of Lemma \ref{dRtenprod} and is left to the reader. We may thus reduce to the case of $\R^{0|{\bf j}}$ where ${\bf j} = 0|\dotsc |1| \dotsc |0$. Let $\theta$ be the coordinate on $\R^{0|{\bf j}}$, then the de Rham complex of $\R^{0|{\bf j}}$ is

\[
0 \to \R \xrightarrow{i} \R[\theta] \xrightarrow{d} \Omega^1 \xrightarrow{d} \Omega^2 \to \dotsc.
\]\

It is obvious that the functions $f \in \R[\theta]$ such that $df = 0$ are precisely the constants. For $k \geq 1$, any $k$-form $\omega$ may be written as $\omega = (d\theta)^k (a + \theta b)$, for $a, b \in \R$. $\omega$ is closed if and only if $b = 0$. Then $\sigma := (d \theta)^{k-1} \cdot \theta a$ is an antiderivative for $\omega$.

\end{proof}

\begin{rema}
By the existence of partitions of unity on $\Zn$-supermanifolds \cite{CGPa}, the sheaf of functions $\cO$ on a $\Zn$-supermanifold $M$ is soft, whence the same is true for the sheaves of $\cO$-modules $\Omega^k_M$. Hence for any $k$ the \v{C}ech cohomology groups $H^i(M, \Omega^k)$ vanish for all $i > 0$. By standard arguments, we have the de Rham isomorphism for any $\Zn$-supermanifold $M$:

\begin{align*}
H^*_{dR} (M, \Om^\bullet) \cong H^*(|M|, \R).
\end{align*}\

Thus the de Rham cohomology of a $\Zn$-supermanifold recovers the topological cohomology of the underlying reduced space, just as in ordinary supergeometry.
\end{rema}

\section{Appendix}

\subsection{Linear algebra over Hausdorff complete $\Zn$-supercommutative rings.}

In this section, we will work exclusively with $\Zn$-supercommutative rings which are Hausdorff complete in the $J$-adic topology, where $J$ denotes the homogeneous ideal of $R$ generated by the elements of nonzero degree.

\subsubsection{Rank of a linear map.}

We begin with the following criterion for invertibility of a square degree-zero matrix.\\

\begin{prop}\label{invertibleblockdiag}
Let $R$ be a $J$-adically Hausdorff complete $\Zn$-supercommutative ring. Let $T$ be a degree zero square $p|{\bf q}$ matrix with entries in $R$, written in the standard block format:

\begin{equation*}
T =  \left(\begin{array}{c|c|c}
T_{11} & \dotsc & T_{1N} \\
\hline
\vdots & \ddots & \vdots \\
\hline
T_{N1} & \dotsc & T_{NN} \\
\end{array}\right).
\end{equation*}

\noindent Then $T$ is invertible if and only if  $T_{ii}$ is invertible for all $i$.
\end{prop}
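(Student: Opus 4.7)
The plan is to reduce the invertibility of $T$ over $R$ to the invertibility of its reduction modulo $J$, then observe that modulo $J$ the matrix $T$ becomes block-diagonal.

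I would first establish the following lemma: a square matrix $M$ with entries in $R$ is invertible over $R$ if and only if its entry-wise reduction $\pi(M)$ under $\pi\colon R \to R/J$ is invertible over $R/J$. The forward direction is immediate since $\pi$ is a ring homomorphism. For the converse, lift any inverse of $\pi(M)$ to a matrix $N$ over $R$ entry-wise, so that $MN = \I + K$ with every entry of $K$ in $J$. Since the entries of $K^m$ are sums of products of $m$ elements of $J$, they all lie in $J^m$, whence the partial sums of $\sum_{k \ge 0} (-1)^k K^k$ form a $J$-adically Cauchy sequence in the ring of matrices, which is a free $R$-module of finite rank and therefore $J$-adically Hausdorff complete by Proposition \ref{freecomplete}. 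The limit $L$ satisfies $L(\I+K) = (\I+K)L = \I$, so $NL$ is a right inverse of $M$; a symmetric argument yields a left inverse, so $M$ is invertible.

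Next, I would exploit the degree-zero assumption on $T$. In the standard block decomposition the entries of $T_{ij}$ have $\Zn$-degree $\gamma_i - \gamma_j$. Thus $T_{ii}$ has entries in $R^{\mathbf{0}}$, while for $i \neq j$ every entry of $T_{ij}$ has nonzero degree and hence lies in $J$. Consequently $\pi(T)$ is a block-diagonal matrix over the ordinary commutative ring $R/J$ with diagonal blocks $\pi(T_{ii})$.

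Combining these two observations finishes the proof. Over the commutative ring $R/J$, a block-diagonal matrix is invertible iff each of its diagonal blocks is invertible (e.g., by factoring the determinant). Hence $\pi(T)$ is invertible iff every $\pi(T_{ii})$ is, and applying the lemma both to $T$ and separately to each $T_{ii}$ yields the claimed equivalence. The principal technical step is the geometric-series argument in the lemma: one must verify that iterates $K^m$ have entries in $J^m$ so that partial sums are Cauchy, and then invoke Hausdorff completeness of the matrix ring (guaranteed by Proposition \ref{freecomplete}) to produce the limit. The rest is bookkeeping with the block/degree structure.
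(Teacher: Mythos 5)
Your proposal is correct and follows essentially the same route as the paper's own proof: reduce modulo $J$, observe that the degree-zero hypothesis makes the reduction block-diagonal over the commutative ring $R/J$, and lift invertibility back to $R$ via the geometric series $\sum_k(-1)^kK^k$, which converges by $J$-adic Hausdorff completeness. Your version is in fact slightly more complete than the paper's, since you explicitly apply the reduction lemma to each diagonal block $T_{ii}$ and address the left inverse as well as the right one.
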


\begin{proof}

The algebra morphism $\varepsilon: A \to \,A/J$ 
induces a map on matrices which sends a zero-degree matrix $T$ over $A$ to the block-diagonal matrix $\overline{T}:=(t^i_j \mod J)$ over $A/J$.

Clearly, if $T$ is invertible with inverse $T^{-1}$, then $\overline{T}$ is also invertible with inverse $\overline{T}^{-1}=\overline{(T^{-1})}$.

Conversely, let us assume $\overline{T}$ is invertible. Then, there exists a zero-degree matrix $Y$ such that $TY=\mathbb{I}+Z$, with $Z\in \mathrm{gl}^0(\vec{r};J)$. By Hausdorff-completeness, $\mathbb{I}+Z$  is invertible with inverse
$$ (\mathbb{I}+Z)^{-1}= \mathbb{I}+\sum_{k\geq 1} (-Z)^k $$
Indeed, the right-hand side is meaningful: it is the matrix whose $(i,j)$-th entry (for every pair of indices $i,j$) is the unique limit of the ($J$-adic) Cauchy sequence of partial sums
$$ \Big(
\delta_{ij}-z_{ij} + \sum_{l}z_{il}z_{lj}+ \ldots +(-1)^k \!\!\sum_{a_1,\ldots, a_{k-1}} \! z_{i,a_1}z_{a_1,a_2}\cdots z_{a_{k-1}j}
\Big)_k$$
Invertibility of $T$ follows.
\end{proof}

\bigskip

\begin{defn}
Let $R$ be a $\Zn$-superalgebra such that the group of units $R^*$ is a subset of $R^0$, and let $T \in Hom_R(R^{p|{\bf q}}, R^{m|{\bf n}})$. The {\it rank} of $T$, denoted $\text{rank} \, (T)$, is the graded dimension of the largest invertible submatrix of $T$.
\end{defn}

\medskip

\begin{rema}
We emphasize that the notion of $\mathbb{Z}^n_2$-graded rank is only well-behaved for matrices over a $\Zn$-supercommutative ring $R$ for which $R^* \subseteq R^0$; for arbitrary $\Zn$-supercommutative rings, one has only the classical rank and super rank of a matrix (cf. \cite{CM} where various notions of rank are discussed for $\Gamma$-graded algebras, $\Gamma$ a finite abelian group). The structure sheaf $\cO_M$ of a $\Zn$-supermanifold $M$ is clearly a sheaf of $\Zn$-supercommutative rings that satisfy the condition $R^* \subseteq R^0$.
\end{rema}

\begin{prop}
Let $R$ be a $\Zn$-superalgebra such that $R^* \subseteq R^0$, and suppose $R$ is $J$-adically Hausdorff complete. Let $T \in End_R(R^{p|{\bf q}})$ be in the standard block format. Let $T_{ii}$ be the $i$th block submatrix (possibly empty) of degree zero elements of $T$, $i = 1, \dotsc, N$. Then $\text{rank} \, (T) = \text{rank} \, (T_{11}) \, |\,\text{rank} \, (T_{22}) \, | \dotsc | \, \text{rank} \, (T_{NN})$, where the rank of the empty matrix is defined to be $0$ by convention.
\end{prop}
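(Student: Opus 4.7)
The plan is to deduce this rank formula directly from Proposition \ref{invertibleblockdiag} by exploiting the fact that, in the standard block format, the off-diagonal blocks of a degree-zero matrix have entries lying in $J$ (they carry nonzero $\Zn$-degree), while the diagonal blocks $T_{ii}$ have entries in $R^0$. Thus the invertibility criterion supplied by Proposition \ref{invertibleblockdiag} will decouple the problem along the diagonal blocks.

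The first step is to fix what a ``submatrix'' relevant to the graded rank means. A square degree-zero submatrix $S$ of $T$ of graded dimension $r_1|r_2|\dotsc|r_N$ is determined by choosing, for each $i$, subsets $I_i$ of rows and $J_i$ of columns of degree $\gamma_{i-1}$ with $|I_i|=|J_i|=r_i$; the resulting $S$ inherits the standard block format, its diagonal blocks $S_{ii}$ being submatrices of $T_{ii}$ and its off-diagonal blocks $S_{ij}$ (for $i\neq j$) being submatrices of $T_{ij}$, hence having entries in $J$. One should briefly note that any submatrix not of this block-respecting shape cannot be a degree-zero square matrix, so it plays no role in computing the graded rank.

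Next, apply Proposition \ref{invertibleblockdiag} to $S$: it is invertible if and only if each $S_{ii}$ is invertible. Therefore the set of graded dimensions $r_1|\dotsc|r_N$ achievable by invertible square degree-zero submatrices of $T$ is precisely the product of the sets of sizes achievable by invertible submatrices of the individual $T_{ii}$. Taking the coordinatewise supremum, which can be realized simultaneously by taking the direct sum of maximal invertible submatrices of each $T_{ii}$, yields
\[
\mathrm{rank}(T) \;=\; \mathrm{rank}(T_{11})\,|\,\mathrm{rank}(T_{22})\,|\,\dotsc\,|\,\mathrm{rank}(T_{NN}),
\]
with the convention that an empty $T_{ii}$ contributes a $0$ entry, as stated.

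The only point requiring care is the first one: verifying that no ``mixed'' square submatrix of $T$ (one that picks rows and columns of mismatched graded counts) can be invertible in the sense relevant for graded rank, so that restricting attention to block-respecting submatrices is legitimate. This is the step where one uses that $R^*\subseteq R^0$ (invertible elements are of degree zero), together with the standard observation that a square matrix whose diagonal after reduction modulo $J$ is not square-block-structured cannot become invertible over $R/J$; Hausdorff completeness then transfers the obstruction back to $R$. Once this is in place, the rest of the argument is the direct application of Proposition \ref{invertibleblockdiag} outlined above.
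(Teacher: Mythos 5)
Your proposal is correct and follows essentially the same route as the paper: both directions come from Proposition \ref{invertibleblockdiag}, first noting that any invertible degree-zero square submatrix of $T$ has invertible diagonal blocks (bounding each graded rank entry by $\mathrm{rank}(T_{ii})$), and then assembling maximal invertible submatrices of the $T_{ii}$ into an invertible submatrix of $T$ realizing $\mathrm{rank}(T_{11})\,|\,\dotsc\,|\,\mathrm{rank}(T_{NN})$. Your extra remark ruling out ``mixed'' submatrices is a reasonable clarification of the definition of graded rank that the paper leaves implicit (though only the trivial direction, that invertibility over $R$ implies invertibility over $R/J$, is needed there, not Hausdorff completeness).
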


\begin{proof}
Let $\text{rank}(T) = r|{\bf s}$. Then there exists an invertible $r|{\bf s}$ square submatrix $T'$ of $T$. By Proposition \ref{invertibleblockdiag}, the block-diagonal degree-zero submatrices $T'_{ii}$ are all invertible. $T'_{11}$ is an $r \times r$ square matrix, and $T'_{ii}$ is an $s_{i-1} \times s_{i-1}$ square matrix, whence $r \leq \text{rank} \, (T_{11})$, $s_{i-1} \leq \text{rank} \, (T_{ii})$ for $i = 2, \dotsc, N$. On the other hand, for each $i$ we have that some $\text{rank} \,(T_{ii}) \times \text{rank} \, (T_{ii})$ submatrix of $T_{ii}$ is invertible. Deleting all rows and columns of $T$ which do not contain any entries from these invertible submatrices produces a $\text{rank} \, (T_{11}) \, |\,\text{rank} \, (T_{22}) \, | \dotsc | \, \text{rank} \, (T_{NN})$ square submatrix of $T$ which is invertible by Proposition \ref{invertibleblockdiag}. We conclude that $r | {\bf s} = \text{rank} \, (T_{11}) \, |\,\text{rank} \, (T_{22}) \, | \dotsc | \, \text{rank} \, (T_{NN})$.
\end{proof}

\bigskip

\subsubsection{Free modules}

We have the following simple but very important property of free modules over $J$-adically Hausdorff complete $\Zn$-graded rings.

\bigskip

\begin{prop} \label{freecomplete}
Let $T$ be a $J$-adically Hausdorff complete $\Zn$-supercommutative ring, and let $M$ be a free $R$-module. Then $M$ is $J$-adically Hausdorff complete.
\end{prop}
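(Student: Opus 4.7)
The plan is to reduce the Hausdorff completeness of $M$ to that of $R$ by exploiting the freeness hypothesis, working one free generator at a time. I will assume (as the intended application in the main text suggests) that $M$ has finite rank, say $M \cong R^{p|\vect{q}}$; the subtlety of infinite rank is that completing direct sums yields products, not sums, so the argument below only goes through verbatim in the finite rank case.

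First I would unwind the definition: Hausdorff completeness of $M$ means that the canonical map $p_M : M \to \varprojlim_{k \in \N} M/J^k M$ is an isomorphism. The key observation is that if $M = R^{p|\vect{q}}$, then $J^k M = (J^k)^{p|\vect{q}}$ as a submodule (this is immediate from the definition of $J^k M$ on a free module with a chosen basis, together with the fact that $J^k$ is an ideal), and hence there is a natural isomorphism of $\Zn$-graded $R$-modules $M/J^k M \cong (R/J^k)^{p|\vect{q}}$, compatible with the transition maps $M/J^{k+1} M \to M/J^k M$ and $R/J^{k+1} \to R/J^k$.

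Next I would invoke the fact that inverse limits commute with \emph{finite} direct sums (equivalently, with finite products, which coincide with finite direct sums in the category of $R$-modules). This yields a natural isomorphism
\[
\varprojlim_{k \in \N} M/J^k M \; \cong \; \varprojlim_{k \in \N} (R/J^k)^{p|\vect{q}} \; \cong \; \left(\varprojlim_{k \in \N} R/J^k\right)^{p|\vect{q}}.
\]
By the $J$-adic Hausdorff completeness of $R$, the canonical map $p_R : R \to \varprojlim_k R/J^k$ is an isomorphism, and passing to the $p|\vect{q}$-fold direct sum shows that $p_R^{p|\vect{q}} : R^{p|\vect{q}} \to (\varprojlim_k R/J^k)^{p|\vect{q}}$ is an isomorphism as well.

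The only remaining point, which I regard as the main bookkeeping obstacle rather than a conceptual one, is to verify that under all of the identifications made, the composite isomorphism $M \to (\varprojlim_k R/J^k)^{p|\vect{q}} \to \varprojlim_k M/J^k M$ coincides with the canonical map $p_M$. This is a direct diagram chase, using that each identification is induced componentwise by a chosen basis of $M$ and is manifestly natural in $k$. Once this is checked, $p_M$ is an isomorphism, which is the claim. I would close by remarking that the same argument establishes Hausdorff completeness of the stalks $(\cT M)_m$, as invoked in the corollary following Proposition \ref{tangentbasis}.
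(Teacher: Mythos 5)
Your proof is correct and follows essentially the same route as the paper's: decompose $M$ along a homogeneous basis, identify $J^kM$ componentwise, and use the Hausdorff completeness of $R$ (the paper phrases this as a reduction to a rank-one free module, you phrase it as commuting $\varprojlim$ with a finite direct sum, which is the same computation). Your explicit restriction to finite rank is in fact more careful than the paper's blanket statement, since for an infinite basis the inverse limit of the quotients $M/J^kM$ is a product rather than a direct sum and completeness can fail, while finite rank is all that is needed for the tangent sheaf and the sheaves of forms.
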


\begin{proof}
Since $M$ is free, there exists a homogeneous basis $e_i$ of $M$. We have $J^k M = J^k \bigoplus_i R e_i = \bigoplus_i J^k e_i$. This reduces the proposition to the case of a free module on a single homogeneous basis element; in this case, the proposition follows from the $J$-adic Hausdorff completeness of $R$ as a module over itself.
\end{proof}

\bigskip

\subsection{The symmetric algebra and the exterior algebra}

We will require some basic algebraic results on the symmetric and exterior algebras. Our treatment follows the point of view of Deligne-Morgan \cite{DM}.

Let $R$ be a commutative $\Zn$-superalgebra. Then the category $Mod_R$ of $\Zn$-graded $R$-modules is a symmetric monoidal category with monoidal structure given by the tensor product $\otimes$ over $R$, with the braiding $v \otimes w \mapsto (-1)^{\langle \text{deg}(w), \text{deg}(v) \rangle} w \otimes v$. Hence for any $R$-module $M$, there is a canonical action of the symmetric group $S_n$ on the $n$-fold tensor product $M^{\otimes n}$.

The $n$th symmetric power of $M$, denoted $Sym^n_R(M)$, is the quotient of $M \otimes \dotsc \otimes M$ ($n$ times) by the action of the symmetric group $S_n$. Since infinite direct sums exist in the category of $R$-modules, the symmetric algebra $Sym^\bullet_R(M) := \oplus_{n \geq 0} Sym^n_R(M)$ is well-defined and is a commutative unital $\Zn$-superalgebra over $R$, the product being induced by the tensor product:

\begin{align*}
[m_1 \otimes \dotsc \otimes m_k] \cdot [n_1 \otimes \dotsc \otimes n_l] = [m_1 \otimes \dotsc \otimes m_k \otimes n_1 \otimes \dotsc \otimes n_l].
\end{align*}\\

There is a canonical $R$-module morphism $M \to Sym^\bullet_R(M)$, given by naturally identifying $M$ with $Sym^1_R(M)$. The symmetric algebra is characterized up to unique isomorphism by the following universal property: for any commutative unital $R$-algebra $A$ and $R$-module morphism $f: M \to A$, there exists a unique $R$-algebra morphism $F$ such that the diagram

\[
\begin{tikzcd}
M\arrow{r}{i} \arrow{dr}{f} & Sym^\bullet_R(M) \arrow{d}{F}\\
& A
\end{tikzcd}
\]

\bigskip

\noindent commutes.

Let $R$ be a $\Zn$-superalgebra over $\R$, and let $Mod^{gr}_R$ be the symmetric monoidal category of $\mathbb{Z}$-graded $R$-modules endowed with the tensor product and associativity isomorphisms of $Mod_R$, but with the braiding given by:

\begin{align*}
c_{VW}^{gr}: &V \otimes W \to W \otimes V\\
&v \otimes w \mapsto (-1)^{mn} c_{VW}(v \otimes w)
\end{align*}\

\noindent where $c_{VW}$ is the braiding in $Mod_R$ and $v \in V^m, w \in W^n$.

For $M \in Mod_R$, we define the exterior algebra $\Lambda^\bullet(M)$ to be the symmetric algebra (in $Mod^{gr}_R$) on the object of $Mod^{gr}_R$ whose $\mathbb{Z}$-degree $1$ part equals $M$ and which is $0$ in all other degrees. (This definition is readily seen to agree with that of \cite{CM}). The degree $k$ part of $\Lambda^\bullet(M)$ is just $\Lambda^k(M)$.

The universal property of the symmetric algebra then implies the crucial

\begin{cor}\label{freeexterior}
If $M$ is a free $R$-module of finite rank, $\Lambda^k(M)$ is a free $R$-module of finite rank for each $k$.
\end{cor}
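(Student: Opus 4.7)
The plan is to reduce the claim to the rank-one case via a multiplicative property of $\Lambda^\bullet$ under direct sums, and then to analyze each rank-one summand directly from the braiding relations.

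First I would establish the multiplicative isomorphism $\Lambda^\bullet_R(M \oplus N) \cong \Lambda^\bullet_R(M) \otimes_R \Lambda^\bullet_R(N)$ as $\N$-graded $R$-algebras in $Mod^{gr}_R$. This follows by a standard universal-property argument: given any commutative $R$-algebra $A$ in $Mod^{gr}_R$ and an $R$-module morphism $M \oplus N \to A$ into its $\Z$-degree-$1$ piece, its restrictions to $M$ and $N$ extend uniquely to $R$-algebra morphisms $\Lambda^\bullet_R(M) \to A$ and $\Lambda^\bullet_R(N) \to A$, and these combine via the multiplication of $A$ (which is commutative with respect to the braiding $c^{gr}$ of $Mod^{gr}_R$) into a unique $R$-algebra morphism $\Lambda^\bullet_R(M) \otimes_R \Lambda^\bullet_R(N) \to A$; by uniqueness the tensor product inherits the required universal property.

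Next, fix a homogeneous basis $v_1, \ldots, v_n$ of $M$ and iterate the above multiplicativity to reduce the claim to showing that $\Lambda^k(Rv_i)$ is free of finite rank for each $i$ and each $k$. Inside $\Lambda^\bullet(Rv)$ the braiding $c^{gr}$ yields the commutation relation
\[
v \wedge v \;=\; -(-1)^{\langle \deg(v),\,\deg(v)\rangle}\, v \wedge v.
\]
If $v$ has underlying-even $\Zn$-degree, then $\langle \deg(v),\deg(v)\rangle$ is even, whence $v \wedge v = -\,v \wedge v$ and so, in characteristic zero, $v \wedge v = 0$; therefore $\Lambda^0(Rv) = R$, $\Lambda^1(Rv) = Rv$, and $\Lambda^k(Rv) = 0$ for $k \geq 2$. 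If instead $v$ has underlying-odd $\Zn$-degree, no relation is imposed on $v \wedge v$, so $\Lambda^\bullet(Rv)$ is the symmetric algebra on a single generator in $Mod^{gr}_R$, i.e.\ the free module $\bigoplus_k R \cdot v^{\wedge k}$, so each $\Lambda^k(Rv)$ is free of rank $1$. Putting the pieces back together yields
\[
\Lambda^k(M) \;\cong\; \bigoplus_{a_1 + \cdots + a_n = k} \Lambda^{a_1}(Rv_1) \otimes_R \cdots \otimes_R \Lambda^{a_n}(Rv_n),
\]
a \emph{finite} direct sum (only finitely many multi-indices survive the constraints $a_i \leq 1$ at underlying-even generators) of tensor products of free finite-rank $R$-modules, whence $\Lambda^k(M)$ is itself free of finite rank.

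The main obstacle is the careful bookkeeping behind the multiplicative property: one must check that $\Lambda^\bullet(M) \otimes_R \Lambda^\bullet(N)$, equipped with the sign-twisted multiplication coming from the braiding of $Mod^{gr}_R$, really is commutative in the braided sense and genuinely satisfies the universal property characterizing the symmetric algebra on $M \oplus N$ in $Mod^{gr}_R$. Once that bookkeeping is in place, the rank-one analysis above is entirely elementary.
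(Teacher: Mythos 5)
Your argument is correct, but it takes a more explicit route than the paper, which simply asserts that the corollary follows from the universal property of the symmetric algebra (the intended argument being the standard monomial-basis description of $Sym^{\bullet,gr}$ of a free module). What you do instead is first establish the multiplicativity $\Lambda^\bullet(M'\oplus M'')\cong\Lambda^\bullet(M')\otimes_R\Lambda^\bullet(M'')$ --- which is precisely the paper's Corollary \ref{exteriordirsum}, stated \emph{after} \ref{freeexterior} and proved there by a Yoneda/functor-of-points argument using that the tensor product is the coproduct of commutative algebras in $Mod^{gr}_R$ --- and then reduce to the rank-one case, where your parity computation ($\langle\gamma,\gamma\rangle\equiv\bar\gamma \bmod 2$, so $v\wedge v=0$ for underlying-even $v$ in characteristic zero, and no relation for underlying-odd $v$) is correct and matches the paper's remark on the absence of top-degree forms. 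Since the paper's proof of \ref{exteriordirsum} does not use \ref{freeexterior}, your ordering introduces no circularity; what it buys is an explicit homogeneous basis of $\Lambda^k(M)$ by wedge monomials $v_1^{\wedge a_1}\wedge\cdots\wedge v_n^{\wedge a_n}$ with $a_i\le 1$ at underlying-even generators, hence an actual rank count, at the cost of the coproduct/commutativity bookkeeping you acknowledge (which is exactly the content the paper defers to its proof of \ref{exteriordirsum}). Two small points: the finiteness of the sum over multi-indices with $a_1+\cdots+a_n=k$ is automatic for fixed $k$ and does not need the constraint $a_i\le 1$, so your parenthetical is a red herring; and in the underlying-odd case you should note (as is easily checked) that every transposition acts as the identity on the rank-one module $(Rv)^{\otimes n}$, so that the coinvariants are indeed all of $R\cdot v^{\otimes n}$ and each $\Lambda^k(Rv)$ is genuinely free of rank one.
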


We will also require:

\begin{cor}\label{exteriordirsum}
Let $M', M''$ be $R$-modules, and let $M:= M' \oplus M''$. Then there is a natural isomorphism $ F_M: \Lambda^\bullet(M) \to  \Lambda^\bullet(M') \otimes_R \Lambda^\bullet(M'')$, induced by $(m', m'') \mapsto m' \otimes 1 + 1 \otimes m''$, such that if $f': M' \to N', f'': M''\to N''$ are morphisms, and $N = N' \oplus N''$, the diagram

\[
\begin{tikzcd}
\Lambda^\bullet(M) \arrow{r}{F_M} \arrow{d}{(f' \oplus f'')_*} &\Lambda^\bullet(M') \otimes_R \Lambda^\bullet(M'') \arrow{d}{f'_* \otimes f''_*}\\ \Lambda^\bullet(N) \arrow{r}{F_N} &\Lambda^\bullet(N') \otimes_R \Lambda^\bullet(N'')
\end{tikzcd}
\]

is commutative.
\end{cor}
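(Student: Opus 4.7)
The plan is to deduce the corollary from the universal property of the exterior algebra, which, as recalled in the excerpt, is the symmetric algebra in the braided category $Mod_R^{gr}$ of $\mathbb{Z}$-graded $R$-modules. The first step is to endow $\Lambda^\bullet(M') \otimes_R \Lambda^\bullet(M'')$ with the structure of a unital $R$-algebra which is commutative in the sense of $Mod_R^{gr}$: the multiplication is
\[
(\alpha \otimes \beta)(\alpha' \otimes \beta') := (-1)^{|\beta|\,|\alpha'| + \langle \deg(\beta),\, \deg(\alpha')\rangle}\,(\alpha \alpha') \otimes (\beta \beta'),
\]
where $|\cdot|$ denotes $\mathbb{Z}$-degree and $\deg$ the $\Zn$-degree. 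A routine sign computation, using that each $\Lambda^\bullet(M^\sharp)$ is itself commutative in $Mod_R^{gr}$, shows that this multiplication is associative and commutative in $Mod_R^{gr}$. The composition $M \to \Lambda^\bullet(M') \otimes_R \Lambda^\bullet(M'')$, $(m', m'') \mapsto m' \otimes 1 + 1 \otimes m''$, is an $R$-module morphism landing in the $\mathbb{Z}$-degree $1$ component, so by the universal property (applied to $M$ placed in $\mathbb{Z}$-degree $1$) it extends uniquely to an $R$-algebra morphism $F_M : \Lambda^\bullet(M) \to \Lambda^\bullet(M') \otimes_R \Lambda^\bullet(M'')$.

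To show $F_M$ is an isomorphism I would construct an explicit inverse. The inclusions $j': M' \hookrightarrow M$ and $j'': M'' \hookrightarrow M$ induce, by functoriality of $\Lambda^\bullet$, $R$-algebra morphisms $J' : \Lambda^\bullet(M') \to \Lambda^\bullet(M)$ and $J'' : \Lambda^\bullet(M'') \to \Lambda^\bullet(M)$. Since $\Lambda^\bullet(M)$ is commutative in $Mod_R^{gr}$, the universal property of the tensor product of commutative algebras in a symmetric monoidal category yields a unique $R$-algebra morphism $G_M : \Lambda^\bullet(M') \otimes_R \Lambda^\bullet(M'') \to \Lambda^\bullet(M)$ with $G_M(\alpha \otimes \beta) = J'(\alpha) \cdot J''(\beta)$. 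Both compositions $F_M \circ G_M$ and $G_M \circ F_M$ are then $R$-algebra endomorphisms of the respective target which agree with the identity on the generating sets $M' \oplus 0,\, 0 \oplus M'' \subset \Lambda^\bullet(M') \otimes_R \Lambda^\bullet(M'')$ and $M \subset \Lambda^\bullet(M)$. The uniqueness clause in the universal property then forces them to equal the identity.

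For the naturality square, observe that the two compositions $(f'_* \otimes f''_*) \circ F_M$ and $F_N \circ (f' \oplus f'')_*$ are both $R$-algebra morphisms $\Lambda^\bullet(M) \to \Lambda^\bullet(N') \otimes_R \Lambda^\bullet(N'')$. On the generator $M \subset \Lambda^\bullet(M)$ each sends $(m', m'') \mapsto f'(m') \otimes 1 + 1 \otimes f''(m'')$, and so by uniqueness in the universal property they coincide on all of $\Lambda^\bullet(M)$.

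The one place worth careful verification is the very first step: pinning down the doubly graded sign rule that makes $\Lambda^\bullet(M') \otimes_R \Lambda^\bullet(M'')$ into a \emph{commutative} $R$-algebra object of $Mod_R^{gr}$, combining the $\Zn$-Koszul sign inherited from $R$ with the $\mathbb{Z}$-Koszul sign of the exterior grading. Once the sign convention is fixed, the remaining assertions are formal consequences of the universal property and functoriality of $\Lambda^\bullet$.
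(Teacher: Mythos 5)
Your proof is correct, and it rests on the same two ingredients the paper uses --- the universal property of the graded symmetric algebra in $Mod^{gr}_R$ (i.e.\ of $\Lambda^\bullet$) and the fact that $\otimes_R$ is the coproduct of commutative algebra objects in $Mod^{gr}_R$ --- but you deploy them along a genuinely different route. The paper works entirely on the level of functors of points: it chains four natural bijections
$Hom_{alg}^{gr}(\Lambda^\bullet(M')\otimes_R\Lambda^\bullet(M''),A)\cong\dots\cong Hom_{alg}^{gr}(\Lambda^\bullet(M),A)$,
invokes Yoneda's lemma to conclude that the representing objects are isomorphic, and then identifies the resulting isomorphism with $F_M$ (and leaves the commutative square) by evaluating at the identity morphism. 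You instead construct $F_M$ directly by extending $(m',m'')\mapsto m'\otimes 1+1\otimes m''$ via the universal property, build an explicit candidate inverse $G_M$ from the inclusions $M',M''\hookrightarrow M$ together with the coproduct property of the tensor product, and establish $F_M\circ G_M=\mathrm{id}$, $G_M\circ F_M=\mathrm{id}$ and the naturality square by uniqueness of algebra morphisms agreeing on the generators ($M$ on one side, $M'\otimes 1$ and $1\otimes M''$ on the other, which do generate the tensor product as an algebra since $(\alpha\otimes 1)(1\otimes\beta)=\alpha\otimes\beta$). Your version is more hands-on: it makes the inverse and the sign convention (the combined $\mathbb{Z}$- and $\Zn$-Koszul sign in the product on $\Lambda^\bullet(M')\otimes_R\Lambda^\bullet(M'')$) explicit, and it actually verifies the square rather than delegating it to the reader; the paper's Yoneda formulation is more compact and makes naturality structurally transparent, at the price of leaving the concrete description of the isomorphism implicit. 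Both arguments are purely formal consequences of the same universal properties, so yours stands as a valid alternative proof.
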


\begin{proof}
We note the following sequence of isomorphisms:

\begin{align*}
& Hom_{alg}^{gr}(Sym^{\bullet, gr}(M') \otimes_R^{gr} Sym^{\bullet, gr}(M''), A)\\
\cong & Hom_{alg}^{gr}(Sym^{\bullet, gr}(M'), A) \prod Hom_{alg}(Sym^{\bullet, gr}(M''), A)\\
\cong & Hom_R(M', A) \prod Hom_R(M'', A)\\
\cong & Hom_R(M' \oplus M'', A)\\
\cong & Hom_R^{gr} (Sym^{\bullet, gr}(M), A)\\
\end{align*}

All isomorphisms involved are canonical: the first results from the fact that the tensor product is the coproduct of $\Zn$-superalgebras over $R$ in $Mod^{gr}_R$; the second results from the universal property of the symmetric algebra in $Mod^{gr}_R$; the third is the compatibility of $Hom_R$ (considered as a contravariant functor in the first variable) with coproducts; the fourth is the universal property of $Sym^{\bullet, gr}_R(M)$.

Hence the composition of the above sequence of isomorphisms induces a natural equivalence between the functors of $A$-points of $Sym^{\bullet, gr}_R(M') \otimes^{gr}_R Sym^{\bullet, gr}_R(M'')$ and $Sym^{\bullet, gr}(M)$; by Yoneda's lemma, the inverse of this equivalence corresponds to an isomorphism $Sym^{\bullet, gr}_R(M) \to Sym^{\bullet, gr}_R(M') \otimes^{gr}_R Sym^{\bullet, gr}_R(M'')$. By substituting $Sym^{\bullet, gr}_R(M') \otimes^{gr}_R Sym^{\bullet, gr}_R(M'')$ for $A$ in the above, and considering the identity map of $Sym^{\bullet, gr}_R(M') \otimes^{gr}_R Sym^{\bullet, gr}_R(M'')$, the reader may check that this isomorphism is precisely $F_M$ and verify the stated commutativity of the diagram.
\end{proof}

\bigskip


\end{document}